\newtheorem{teo}{Theorem}[section]
\newtheorem{oss}[teo]{Remark}
\newtheorem{Prop}[teo]{Proposition}
\newtheorem{war}[teo]{Warning}
\newtheorem{lemma}[teo]{Lemma}
\newtheorem{Defi}[teo]{Definition}
\newtheorem{es}[teo]{Example}
\newtheorem{corollario}[teo]{Corollary}
\newtheorem{no}[teo]{Notation}
\newtheorem{Problem}[teo]{Problem}
\newcommand{\cc}{_{^{_\HH}}}
\newcommand{\vv}{_{^{_\VV}}}
\newcommand{\vs}{_{^{_{\VS}}}}
\newcommand{\res}{\mathop{\hbox{\vrule height 7pt width .5pt depth 0pt
\vrule height .5pt width 6pt depth 0pt\,}}\nolimits}
\def \op{^\perp}
\def \DH{{h}}
\newcommand{\LL}{\mathop{\hbox{\vrule height .5pt width 6pt depth
0pt \vrule height 7pt width .5pt depth 0pt\,}}\nolimits}
\newcommand{\rr}{_{^{_\mathit{R}}}}
\newcommand{\ck}{_{^{_{\HH_k}}}}
\newcommand{\ci}{_{^{_{\HH_i}}}}
\newcommand{\ctr}{_{^{_{\HH_3}}}}
\newcommand{\cd}{_{^{_{\HH_2}}}}
\newcommand{\ciss}{_{^{_{\HH_i\!S}}}}
\newcommand{\cj}{_{^{_{\HH_j}}}}
\def\tst{_{^{_{\TT S_t}}}}
\newcommand{\ct}{_{^{_{\HH_t}}}}
\def \cont{{\mathbf{C}}}
\def \cin{{\mathbf{C}^{\infty}}}
\def\dim {\mathrm{dim}}
\def\dc {d_{CC}}
\def\ss{_{^{_{\HS}}}}
\def\ts{_{^{_{\TS}}}}
\def\g{\mathit{g}\cc}
\def\dg{\textit{grad}\cc}
\def\qq{\textit{grad}\ss}
\def \nis {\sigma^{n-2}\cc}
\def \per {\sigma^{n-1}\cc}
\def \perh {\sigma^{2r}\cc}
\def\WW{\widetilde{X}}
\def\SC{{C^{\gg}}}
\def\UU{\mathcal{U}}
\def \op{^\bot}
\def \ot{^\top}
\def \TB{\tau^{_{\TT\!S}}}
\def \nn{\nu\cc}
\def \XH{\mathfrak{X}(\HH)}
\def \XX{\mathfrak{X}}
\def \MS{\mathcal{H}\cc}
\def \P{{\mathcal{P}}}
\def \PH{\P\cc}
\def \Om{\Omega}
\def \Lie{\mathcal{L}}
\def \ee{\mathrm{e}}
\def \R{\mathbb{R}}
\def \Rn{\mathbb{R}^{\DN}}
\def \div{\mathit{div}}
\def \GG{\mathbb{G}}
\def \gg{\mathfrak{g}}
\def \pert{\left(\per\right)_t}
\def\divh{\div\cc}
\def\grr{{{\mathtt{gr}}}}
\def\tsc{\nabla^{^{_{\TT\!{S}}}}}
\def\gs{\nabla^{_{\HS}}}
\def\gc{\nabla^{_{\HH}}}
\def\UU{\mathcal{U}}
\def\UU{\mathcal{U}}
\def \nn{\nu_{_{\!\HH}}}
\def \XG{\mathfrak{X}(\GG)}
\def \Lie{\mathcal{L}}
\def \ee{\mathrm{e}}
\def \Om{\Omega}
\def \Rn{\mathbb{R}^{n}}
\def \R{\mathbb{R}}
\def \N{\mathbb{N}}
\def \cji {c_{j\,i}(x)}
\def \C { C(x):=[\cji]_{j,i},\,\, {j=1,\ldots,m \,,\, i=1,\ldots,n}}
\def \Qdim {Q:=\sum_{i=1}^{k}i\,\DH_i}
\def \X {X=(X_{1}, \ldots, X_{m_1})}
\def \X0 {X_{1}(0)\!=\!\partial_{x_{1}}, \ldots, X_{m_1}(0)\!=\!\partial_{x_{m_1}}}
\def \HG {\HH\GG}
\def \HS {\HH\!{S}}
\def \VS {\VV\!S}
\def \TG {\mathit{T}\GG}
\def \HH {\mathit{H}}
\def \VV {\mathit{V}}
\def \TT {\mathit{T}}
\def \TS {\mathit{T}\!S}
\def \grad{\textit{grad}}
\def \C0H{\mathbf{C}_{0}^{\infty}(U,\HG)}
\def \C00{\mathbf{C}_{0}^{\infty}(U)}
\def \C01{\mathbf{C}_{0}^{1}(U)}
\def \L1{d\,\mathcal{L}^n}
\def \Ar{\mathcal{H}^{n-1}_{Eu}}
\def \Vol{d\,\mathcal{V}ol^n}
\def \exp{\textsl{exp\,}}
\def \esp{\textsl{exp\,}}
\def \exsp{{\textit{exp}\,}}
\def \Eu {_{\rm Eu}}
\def \Om{\Omega}
\def \Rn{\mathbb{R}^{n}}
\def \R{\mathbb{R}}
\def \N{\mathbb{N}}
\def \cji {c_{j\,i}(x)}
\def \C { C(x):=[\cji]_{j,i},\,\, {j=1,\ldots,m \,,\, i=1,\ldots,n}}
\def \GG{\mathbb{G}}
\def \gg{\mathfrak{g}}
\def \X {X=(X_{1}, \ldots, X_{m_1})}
\def \X0 {X_{1}(0)\!=\!\partial_{x_{1}}, \ldots, X_{m_1}(0)\!=\!\partial_{x_{m_1}}}
\def \HG {\mathit{H}}
\def \C0H{\mathbf{C}_{0}^{\infty}(\Om,\HG)}
\def \C00{\mathbf{C}_{0}^{\infty}(\Om)}
\def \C01{\mathbf{C}_{0}^{1}(\Om)}
\def \exp{\textsl{exp\,}}
\def \esp{\textsl{exp\,}}
\def\GG{\mathbb{G}}
\title{Isoperimetric and Sobolev inequalities
 on hypersurfaces in sub-Riemannian Carnot groups}
\author{Francescopaolo Montefalcone
\thanks{F. M. was partially supported by CIRM, Fondazione Bruno Kessler, University of Trento,
Italy and by  Fondazione CaRiPaRo Project ``Nonlinear Partial Differential Equations: models, analysis, and control-theoretic problems".}}
\begin{document}

\maketitle

\begin{abstract}
The geometric setting of this paper\footnote{\bf Warning: \rm In this last version, we have corrected some mistakes and imprecisions. In particular, we have  improved the estimate of the quantity $\mathcal B_2(t)$; see Section \ref{perdindirindina}. This allows us to state a more precise formulation of the main results.  We refer the reader to Section \ref{mike0} for more detailed comments on this final version and, in particular, to Warning \ref{cpzzo}.} is that of smooth sub-manifolds immersed in a sub-Riemannian
$k$-step Carnot group $\GG$ of homogeneous dimension $Q$. Our main
result is an  isoperimetric-type inequality  for the $\HH$-perimeter measure $\per$ in the case of a
compact hypersurface $S$ of class $\cont^2$  with (or without)
boundary $\partial S$; see Theorem \ref{ghaioio}. This result generalizes an inequality 
involving the mean curvature of the hypersurface, proven by
Michael and Simon \cite{MS} and Allard \cite{Allard},
independently. Finally, we prove some related Sobolev-type inequalities; see Section \ref{sobineqg}.
\\{\noindent \scriptsize
\sc Key words and phrases:} {\scriptsize{\textsf {Carnot groups;
Sub-Riemannian Geometry; Hypersurfaces; Isoperimetric Inequality;
Sobolev Inequalities; Blow-up; Coarea
Formula.}}}\\{\scriptsize\sc{\noindent Mathematics Subject
Classification:}}\,{\scriptsize \,49Q15, 46E35, 22E60.}
\end{abstract}

\tableofcontents

\date{}

\normalsize

\section{Introduction}

In the last  decades considerable efforts have been made to extend
to the general setting of metric spaces the methods of Analysis
and Geometric Measure Theory. This philosophy,  in a sense already
contained in Federer's treatise \cite{FE}, has been pursued, among
other authors, by Ambrosio \cite{A2}, Ambrosio and Kirchheim
\cite{AK1}, Capogna, Danielli and Garofalo \cite{CDG}, Cheeger
\cite{Che}, Cheeger and Kleiner \cite{Cheeger1}, David and Semmes
\cite{DaSe}, De Giorgi \cite{DG}, Gromov \cite{Gr1}, Franchi,
Gallot and Wheeden \cite{FGW}, Franchi and Lanconelli
\cite{FLanc},  Franchi, Serapioni and Serra Cassano \cite{FSSC3,
FSSC5}, Garofalo and Nhieu \cite{GN}, Heinonen and Koskela
\cite{HaKo},   Koranyi and Riemann \cite{KR}, Pansu \cite{P1, P2},
but the list is far from being complete.

In this respect, {\it sub-Riemannian} or {\it
Carnot-Carath\'eodory} geometries have become  a subject of great
interest also because of their connections with many different
areas of Mathematics and Physics, such as PDE's, Calculus of
Variations, Control Theory, Mechanics and Theoretical Computer
Science. For references, comments and other perspectives, we refer
the reader to Montgomery's book \cite{Montgomery} and the surveys
by Gromov, \cite{Gr1}, and Vershik and Gershkovich, \cite{Ver}. We
also mention, specifically for sub-Riemannian geometry,
 \cite{Stric} and \cite{P4}. More recently, the
 so-called Visual Geometry has also received new impulses from this
 field; see \cite{SCM}, \cite{CMS} and references therein.

The setting of the sub-Riemannian geometry is that of a smooth
manifold $N$, endowed with a smooth non-integrable distribution
$\HH\subset\TT N$ of $\DH$-planes, or {\it horizontal subbundle}
($\DH\leq\dim N$), where a metric $g\cc$ is defined. The
manifold $N$ is said to be a {\it Carnot-Carath\'eodory space} or
{\it CC-space}  when one introduces the so-called {\it CC-metric}
$\dc$; see Definition \ref{dccar}. With respect to such a metric,
the only paths on $N$ which have finite length are
tangent to $\HH$ and therefore called {\it
horizontal}. Roughly speaking, in connecting two points we are
only allowed to follow horizontal paths joining them.

A $k$-{\it{step Carnot group}}
$(\GG,\bullet)$ is an $n$-dimensional, connected, simply
connected, nilpotent and stratified Lie group (with respect to the
group multiplication $\bullet$) whose Lie algebra $\gg\cong\TT_0\GG$
satisfies the following:\[ {\mathfrak{g}}={\HH}_1\oplus...\oplus {\HH}_k,\quad
 [{\HH}_1,{\HH}_{i-1}]={\HH}_{i}\qquad\forall\,\,i=2,...,k,\quad
 {\HH}_{k+1}=\{0\}.\]We also set $\HH:=\HH_1$ and $\VV:={\HH}_2\oplus...\oplus {\HH}_k$. In the sequel, we shall refer to $\HH$ and $\VV$ as the \it horizontal space \rm and the \it vertical space\rm, respectively. Note that they also have a natural bundle structure, in which the basis is
the group $\GG$.  Let
$\underline{X\cc}:=\{X_1,...,X_{\DH}\}$ be a frame of left-invariant vector
fields for the horizontal layer $\HH$. This frame can be completed to a global
left-invariant frame $\underline{X}:=\{X_1,...,X_n\}$ for $\gg$.
In fact, the standard basis $\{\ee_i:i=1,...,n\}$ of $\Rn$ can be
relabeled to be {\it graded} or {\it adapted to the
stratification}. Every Carnot group $\GG$ is endowed with
a one-parameter group of positive dilations (adapted to the grading of $\gg$)
making it a {\it homogeneous group} of homogeneous dimension
$\Qdim\,(\DH_i=\dim \HH_i)$, in the sense of Stein's definition; see \cite{Stein}.
The number $Q$ coincides with the {\it Hausdorff dimension} of
$(\GG,\dc)$ as a metric space with respect to the CC-distance. Carnot groups are of special
interest for many reasons and, in particular, because they
constitute a wide class of examples of sub-Riemannian geometries.
Note that, by a well-know result due to Mitchell \cite{Mi} (see
also \cite{Montgomery}), the {\it
Gromov-Hausdorff tangent cone} at any regular point of a
sub-Riemannian manifold turns out to be a suitable Carnot group. This fact motivates
the interest towards Carnot groups  which play for
sub-Riemannian geometries  an analogous role to that of
 Euclidean spaces in Riemannian geometry. The initial development of Analysis in this setting was motivated
by some works published in the first eighties. Among others, we
cite the paper by Fefferman and Phong \cite{FePh} about the
so-called ``sub-elliptic estimates'' and that of Franchi and
Lanconelli \cite{FLanc}, where a H\"{o}lder regularity theorem was
proven for a class of degenerate elliptic operators in divergence
form. Meanwhile, the beginning of Geometric Measure Theory was
perhaps an intrinsic isoperimetric inequality proven by Pansu in
his thesis \cite{P1}, for the {\it Heisenberg group}
$\mathbb{H}^1$. For further results about isoperimetric inequalities
on Lie groups and Carnot-Carath\'eodory spaces, see also
\cite{Varo}, \cite{Gr1}, \cite{P4}, \cite{GN}, \cite{CDG},
\cite{FGW}, \cite{HaKo}. For results on these topics, and for more
detailed bibliographic references, we refer the reader to
\cite{A2},  \cite{CDG}, \cite{FSSC3, FSSC5}, \cite{DGN3},
\cite{G}, \cite{GN}, \cite{Mag, Mag2}, \cite{Montea, Monteb},
\cite{HP}. We also quote \cite{CCM}, \cite{vari}, \cite{G}, \cite{Pauls},
\cite{RR}, for some results about minimal and constant
mean-curvature hypersurfaces immersed in Heisenberg groups.

In this paper we are concerned with
hypersurfaces immersed  in Carnot groups, endowed with the
so-called {\it $\HH$-perimeter measure} $\per$; see Definition
\ref{sh}. We  first study some technical
tools. In particular, we extend to hypersurfaces with
non-empty characteristic sets,  the 1st-variation of $\per$,  proved in \cite{Monte, Monteb}
for the non-characteristic case; see Section \ref{prvar0}. We then  discuss a blow-up theorem, which
also holds for characteristic points and a horizontal Coarea
Formula for smooth functions on hypersurfaces; see Section
\ref{blow-up} and Section \ref{COAR}. In Section
\ref{mike}, these results will be used  to investigate the validity in this context of a  monotonicity inequality for the $\HH$-perimeter and of a related isoperimetric inequality. These results were proved by Michael and Simon in \cite{MS}
for a general setting including Riemannian geometries and,
independently,
 by Allard in \cite{Allard} for varifolds; see below for a more precise statement.
In Section \ref{sobineqg}, we shall deduce some related Sobolev-type
inequalities, following a classical pattern by Federer-Fleming
\cite{FedererFleming} and Mazja \cite{MAZ}.  We here observe that similar results in this
direction have been obtained by Danielli, Garofalo and Nhieu
in \cite{DGN3}, where a monotonicity estimate for the
$\HH$-perimeter has been proven for graphical strips in the
Heisenberg group $\mathbb{H}^1$.

Now we would like to make a short comment about the Isoperimetric
Inequality for compact hypersurfaces immersed in the Euclidean
space $\Rn$.

 \begin{teo}[Euclidean Isoperimetric Inequality for $S\subset\Rn$]\label{w33w}Let
$S\subset\Rn\,(n>2)$ be a compact hypersurface of class $\cont^2$ 
with -or without- piecewise  $\cont^1$ boundary. Then
\[\left(\sigma^{n-1}\rr(S)\right)^{\frac{n-2}{n-1}}\leq C_{Isop}\left(\int_S|\mathcal{H}\rr|\,\sigma^{n-1}\rr+\sigma^{n-2}\rr(\partial
S)\right)\]where $C_{Isop}>0$ is a dimensional constant.\end{teo}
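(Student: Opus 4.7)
The plan is to follow the classical Michael--Simon approach, namely to derive a monotonicity-type inequality from the first variation of area and then to run a Vitali covering argument. Fix $x_0\in S$ and set $V(\rho):=\sigma^{n-1}\rr(S\cap B_\rho(x_0))$. Applying the tangential divergence theorem on the manifold-with-boundary $S\cap B_\rho(x_0)$ to the radial field $X(x)=x-x_0$, whose tangential divergence equals $n-1$, gives
$$(n-1)V(\rho)=\int_{S\cap B_\rho}\!\langle\mathcal{H}\rr,x-x_0\rangle\,\sigma^{n-1}\rr+\int_{S\cap\partial B_\rho}\!\langle x-x_0,\eta\rangle+\int_{\partial S\cap B_\rho}\!\langle x-x_0,\eta\rangle,$$
with $\eta$ the outer unit conormal of the respective boundary piece inside $S$. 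Since $|x-x_0|\leq\rho$ throughout $B_\rho$, the three right-hand terms are bounded in absolute value by $\rho\int_{S\cap B_\rho}|\mathcal{H}\rr|\,\sigma^{n-1}\rr$, $\rho\,\sigma^{n-2}\rr(S\cap\partial B_\rho)$ and $\rho\,\sigma^{n-2}\rr(\partial S\cap B_\rho)$ respectively.

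Next, the coarea formula applied to the $1$-Lipschitz function $x\mapsto |x-x_0|$ restricted to $S$ yields $\sigma^{n-2}\rr(S\cap\partial B_\rho)\leq V'(\rho)$ for almost every $\rho$. Substituting this into the previous estimate and rearranging produces the monotonicity-type inequality
$$\frac{d}{d\rho}\!\left[\frac{V(\rho)}{\rho^{n-1}}\right]\geq -\frac{\Phi(\rho)}{\rho^{n-1}},\qquad \Phi(\rho):=\int_{S\cap B_\rho}\!|\mathcal{H}\rr|\,\sigma^{n-1}\rr+\sigma^{n-2}\rr(\partial S\cap B_\rho).$$
Integrating from $0^+$ to $R$ and using the smooth-point density $\lim_{\rho\to 0^+}V(\rho)/\rho^{n-1}=\omega_{n-1}$, one obtains a density lower bound of the form $V(R)\geq c_n R^{n-1}$ valid on any scale $R$ for which the cumulative $\Phi$-contribution stays below a fixed fraction of $V(R)/R$.

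The concluding step is a Vitali covering argument. At each $x\in S$ I would select a stopping radius $R(x)$ defined by a balance equality between the density $V(R(x))/R(x)^{n-1}$ and the cumulated perturbation $\Phi(R(x))$, so that the density estimate still holds at $R(x)$ while the balance condition forces $R(x)^{n-2}\lesssim\Phi(R(x))$. Extract a disjoint Vitali subfamily $\{B_{R_i}(x_i)\}$ whose $5$-enlargement still covers $S$; then summing $\sigma^{n-1}\rr(S)\leq\sum_i\sigma^{n-1}\rr(S\cap B_{5R_i}(x_i))\lesssim\sum_i R_i^{n-1}$ and combining with $\sum_i R_i^{n-2}\lesssim\int_S|\mathcal{H}\rr|\,\sigma^{n-1}\rr+\sigma^{n-2}\rr(\partial S)$ gives, after a H\"older-type manipulation, the desired inequality with exponent $(n-2)/(n-1)$.

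The main obstacle is the calibration of the stopping radii: one must pin down the exact scale at which the density bound degrades in order to recover the sharp exponent $(n-2)/(n-1)$. The monotonicity computation itself is direct, but the combinatorial balancing step requires a careful choice of thresholds, accounting for the geometric scaling of area versus mean-curvature integral and boundary measure.
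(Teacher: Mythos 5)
Your first two steps (the tangential divergence theorem applied to the radial field $X=x-x_0$, the coarea bound $\sigma^{n-2}\rr(S\cap\partial B_\rho)\leq V'(\rho)$, and the resulting differential inequality for $V(\rho)/\rho^{n-1}$) are correct and coincide with the route the paper sketches in the Introduction and implements in the Carnot setting (Proposition \ref{correctdimin} and Theorem \ref{rmonin}). The genuine gap is in your concluding covering step, which you yourself flag as ``the main obstacle'' but do not resolve. First, the chain $\sigma^{n-1}\rr(S)\leq\sum_i\sigma^{n-1}\rr(S\cap B_{5R_i}(x_i))\lesssim\sum_i R_i^{n-1}$ presupposes an \emph{upper} area density bound on the enlarged balls, and no such bound holds for a general compact $\cont^2$ hypersurface: $\sigma^{n-1}\rr(S\cap B_{5R}(x))$ can be arbitrarily large compared with $R^{n-1}$. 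The monotonicity inequality only gives a \emph{lower} density bound, so this step cannot be justified as written. Second, the ``balance equality'' you propose for selecting $R(x)$ need not admit a solution (for instance wherever $\Phi$ vanishes identically on an interval of radii), and the degenerate case $\int_S|\mathcal{H}\rr|\,\sigma^{n-1}\rr+\sigma^{n-2}\rr(\partial S)=0$ must be excluded or treated separately.

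The missing ingredient is the calculus lemma that the paper proves by contradiction (Lemma \ref{lem} in Section \ref{isopineq1}, following Burago--Zalgaller): with $r_0(x)\sim\bigl(\sigma^{n-1}\rr(S)/\omega_{n-1}\bigr)^{1/(n-1)}$ capped at the total-area scale, one shows that for each $\lambda\geq 2$ there exists $r\in\,]0,r_0(x)]$ with
\[
\sigma^{n-1}\rr\bigl(S\cap B_{\lambda r}(x)\bigr)\;\leq\;\lambda^{n-1}\,r_0(x)\,\Phi(r),
\]
the point being that the area of the \emph{enlarged} ball is bounded directly by $r_0\,\Phi(r)$ rather than by $r^{n-1}$. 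If this failed for all $r$, integrating $\Phi(t)t^{-(n-1)}$ against the monotonicity inequality and using the density $\lim_{\rho\to 0^+}V(\rho)/\rho^{n-1}=\omega_{n-1}$ produces a numerical contradiction. With this selection, the Vitali sum reads $\sigma^{n-1}\rr(S)\leq\lambda^{n-1}r_0(S)\sum_k\Phi(r_k)\leq\lambda^{n-1}r_0(S)\,\Phi_{\mathrm{tot}}$ by disjointness, and the exponent $(n-2)/(n-1)$ drops out of $r_0(S)\sim(\sigma^{n-1}\rr(S))^{1/(n-1)}$ alone --- no H\"older-type manipulation on $\sum_iR_i^{n-2}$ versus $\sum_iR_i^{n-1}$ is needed. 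You should replace your balance-equality selection by this contradiction argument (or an equivalent stopping-time lemma that outputs the upper bound on the enlarged ball) to close the proof.
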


In the above statement, $\mathcal{H}\rr$ is the mean curvature and $\sigma^{n-1}\rr$ and $\sigma^{n-2}\rr$ denote, respectively, the Riemannian
measures on $S$ and $\partial S$.
 The first step in the proof  is a linear isoperimetric inequality. More
 precisely, one proves that
\[\sigma^{n-1}\rr(S)\leq r\left(\int_S|\mathcal{H}\rr|\,\sigma^{n-1}\rr+\sigma^{n-2}\rr(\partial
S)\right),\]where $r$ is the radius of a Euclidean ball $B(x, r)$
containing $S$. Starting from this linear inequality and using Coarea Formula,
one gets the so-called {\it monotonicity inequality}, that is, 
\[-\frac{d}{dt}\frac{\sigma^{n-1}\rr(S_t)}{t^{n-1}}\leq
\frac{1}{t^{n-1}}\left(\int_{S_t}|\mathcal{H}\rr|\,\sigma^{n-1}\rr
+ \sigma^{n-2}\rr(\partial S\cap B(x,t))\right)
\]for every $x\in {\rm Int}\,S$, for $\mathcal{L}^1$-a.e. $t>0$, where $S_t=S\cap B(x, t)$. (Note
that every interior point of a $\cont^2$ hypersurface $S$
is a {\it density-point}, that is, $\lim_{t\searrow
0^+}\frac{\sigma\rr^{n-1}(S_t)}{t^{n-1}}=\omega_{n-1}$, where
$\omega_{n-1}$ denotes the  measure of the unit ball in
$\R^{n-1}$).

By applying the monotonicity inequality along with a contradiction argument,  one
obtains a calculus lemma
 which, together with a standard Vitali-type
covering theorem, allows to achieve the proof of Theorem \ref{w33w}. We also remark that the monotonicity inequality  is equivalent to an asymptotic
exponential estimate, that is, \[\sigma^{n-1}\rr(S_t)\geq
\omega_{n-1}\, t^{n-1} e^{-\mathcal{H}^0 t}\]for $t\rightarrow
0^+$, where $x\in {\rm Int}\,S$ and $\mathcal{H}^0$ is any positive constant
such that $|\mathcal{H}\rr|\leq\mathcal{H}^0$. In case of
minimal hypersurfaces (that is,  $\mathcal H\rr=0$), this implies  that
$\sigma^{n-1}\rr(S_t)\geq \omega_{n-1}\, t^{n-1}$ as  $t\rightarrow
0^+$.

We now give a quick overview of the paper.

Section \ref{0prelcar}  introduces Carnot groups, immersed hypersurfaces and submanifolds. In particular,
we  describe some geometric structures and basic facts about
stratified  Lie groups, Riemannian and
sub-Riemannian geometries, intrinsic measures and connections.

If $S\subset\GG$ is a hypersurface of class $\mathbf{C}^1$, then
$x\in S$ is a {\it characteristic point} if $\HH_x\subset\TT_x S$.
If $S$ is non-characteristic,  the {\it unit $\HH$-normal} along
$S$ is given by $\nn: =\frac{\PH\nu}{|\PH\nu|}$, where $\nu$ is
the Riemannian unit normal of $S$ and $\PH:\gg\longrightarrow\HH$ is the orthogonal projection operator onto $\HH$. By means of the {\it
contraction operator} $\LL$ on differential
forms\footnote{\label{contraction}Recall that $\LL:
\Om^{k}(\GG)\rightarrow\Om^{k-1}(\GG)$
is defined, for $X\in\XX(\TG)$ and
$\alpha\in\Om^k(\GG)$, by
\begin{eqnarray*}(X \LL \alpha)
(Y_1,...,Y_{k-1}):=\alpha(X,Y_1,...,Y_{k-1})\end{eqnarray*} This operator extends, in a simple way, to $p$-vectors; see
\cite{Helgason}, \cite{FE}.}, we can define a differential $(n-1)$-form $\per\in\Om^{n-1}(S)$ as
\[\per:=(\nn
\LL \Vol)|_S,\]where
$\Vol:=\bigwedge_{i=1}^n\omega_i\in
\Om^n(\GG)$ denotes the Riemannian (left-invariant)
volume form on $\GG$ (obtained by wedging together the elements of the \textquotedblleft dual\textquotedblright
basis $\underline{\omega}=\{\omega_1,...,\omega_n\}$ of
$\gg$, where $\omega_i=X_i^\ast\in\Om^1(\GG)$, for every $i=1,...,n$). Notice that this  $(n-1)$-form is $(Q-1)$-homogeneous. By integrating the $(n-1)$-form $\per$ along $S$ we obtain the so-called $\HH$-perimeter measure. Note that the characteristic set $C_S$ of $S$ can
be seen as the set of all points at which the horizontal
projection of the unit normal vanishes, that is,  $C_S=\{x\in S:
|\PH\nu|=0\}$.

Analogously, we can define a $(Q-2)$-homogeneous measure $\nis$ on
any $(n-2)$-dimensional smooth submanifold $N$ of $\GG$. To this aim,  let
$\nn=\nn^1\wedge\nn^2$  be a horizontal  unit  normal $2$-vector to $N$; see Definition
\ref{dens}.  Then, we  obtain a $(Q-2)$-homogeneous
measure by integrating the  differential $(n-2)$-form $\nis:=(\nn
\LL \Vol)|_N$. The measures $\per$ and $\nis$ turn out to be
equivalent (up to bounded densities  called {\it metric
factors}; see \cite{Mag, Mag2}), respectively, to the $(Q-1)$-dimensional and $(Q-2)$-dimensional
spherical Hausdorff measures $\mathcal{S}_\varrho^{Q-1}$ and
$\mathcal{S}_\varrho^{Q-2}$ associated with a
homogeneous distance $\varrho$ on $\GG$; see, for instance,  Section \ref{blow-up}.

\begin{oss}The stratification of  $\gg$ induces a  natural  decomposition
of the tangent space of any smooth hypersurface $S\subset \GG$. More precisely,  
we intersect  $\TT_x S\subset\TT_x\GG$ with
$\TT_x^i\GG=\oplus_{j=1}^i(\HH_j)_x$. Setting
$\TT^iS:=\TS\cap\TT^i\GG,$ $n'_i:=\dim\TT^iS$,
$\HH_iS:=\TT^iS\setminus \TT^{i-1}S$ and $\HS=\HH_1S$, yields
$$\TS:=\oplus_{i=1}^k\HH_iS$$ and $\sum_{i=1}^kn'_i=n-1.$
Henceforth, we shall set $\VS:=\oplus_{i=2}^k\HH_iS$. 
\end{oss}

Section \ref{Preliminaries} contains some technical preliminaries.
In Section \ref{COAR} we state a smooth Coarea Formula for the $\HS$-gradient. More precisely, let $S\subset\GG$ be a
 compact hypersurface  of class $\cont^2$  and let
$\varphi\in\mathbf{C}^1(S)$. Then
\[\int_{S}\psi(x)|\qq\varphi(x)|\,\per(x)=\int_{\R}ds\int_{\varphi^{-1}[s]\cap
S}\psi(y)\,\nis(y)\]for every $\psi\in L^1(S; \per)$.

In Section \ref{prvar0} we   discuss the 1st variation formula of $\per$; see Theorem \ref{1vg}. This result, proved in \cite{Monte, Monteb} for non-characteristic hypersurfaces, is  generalized to the case of non-empty
characteristic sets. Roughly speaking, we shall show that the \textquotedblleft infinitesimal\textquotedblright 1st variation of $\per$  is given by
$$\Lie_W\per=\left(-\MS\langle W,\nu\rangle +\div\ts \left( W\ot|\PH\nu|-\langle W,\nu\rangle\nn\ot \right)\right)\,\sigma\rr^{n-1}.$$(Here $\Lie_W\per$ denotes the Lie derivative of $\per$ with respect to the initial velocity $W$  of the  variation, $\MS=-\div\cc\nn$ is the so-called \it horizontal mean curvature \rm of $S$; moreover, the symbols $W\op,\,W\ot$ denote the normal and tangential components of $W$, respectively). If $\MS$ is $L^1(S; \sigma\rr^{n-1})$, then the function $\Lie_W\per$ turns out to be integrable on $S$ and the integral of $\Lie_W\per$ on $S$ gives the 1st variation of $\per$. Note  that the 
third term in the previous formula depends on the normal component of $W$.  We stress that this term was omitted in \cite{Monteb}. Using a generalized divergence-type formula, the divergence term  can be integrated on the boundary. 
It is worth observing that a central point of this paper is to make an appropriate choice of the variation vector field  in the 1st variation formula \eqref{fva2formula}; see Theorem \ref{1vg}.

In Section \ref{blow-up}  we state a blow-up theorem for the
horizontal perimeter $\per$. In other words, we  study the density of $\per$ at $x\in S$, or the
limit\[\lim_{r\rightarrow 0^+}\frac{\per (S\cap
B_{\varrho}(x,r))}{r^{Q-1}},\]where $B_{\varrho}(x,r)$ is a homogeneous
$\varrho$-ball of center $x\in S$ and radius $r$.  We first discuss the  blow-up procedure at
non-characteristic points of a $\cont^1$  hypersurface $S$; see,
for instance, \cite{FSSC3, FSSC5}, \cite{balogh}, \cite{Mag,
Mag2}. Then, under  more regularity assumptions
on $S$, we tract the characteristic case; see
Theorem \ref{BUP}. A similar result can be found in \cite{Mag8} for  submanifolds of
$2$-step groups.

Section \ref{mike} is devoted to our main results, that are a (global) monotonicity inequality  for the $\HH$-perimeter and an isoperimetric-type inequality for compact
hypersurfaces with (or without) boundary, depending on the
horizontal mean curvature $\MS$. This
extends to  Carnot groups an inequality proved by Michael and
Simon \cite{MS} and Allard \cite{Allard}, independently. 

\begin{no}Set  ${\bf r}(S):=\sup_{x\in {\rm Int}(S\setminus C_S)} r_0(x),$ where $r_0(x)=2\left(\frac{\per({S})}{k_\varrho(\nn(x))}\right)^{{1}/{Q-1}}$ and $k_\varrho(\nn(x))$ denotes the \rm metric factor \it at $x$; see Section \ref{dere} and Section \ref{blow-up}. 
\end{no}

\begin{teo}[Isoperimetric-type Inequality]\label{ghaioiuuo0} Let
$S\subset\GG$ be a compact hypersurface of class $\cont^2$  with
boundary $\partial S$  (piecewise) $\cont^1$ and assume that the horizontal mean curvature $\MS$ of $S$ is integrable, that is,  $\MS\in L^1(S; \sigma\rr^{n-1})$.There exists  $C_{Isop}>0$ only
dependent on $\GG$ and on the homogeneous metric $\varrho$ such that   \begin{eqnarray}\label{2gha}\left(\per({S})\right)^{\frac{Q-2}{Q-1}}\leq
C_{Isop}\left(\int_S
|\MS|\,\per +\nis(\partial S)+ \sum_{i=2}^k  \left( {\bf r}(S)\right)^{i-1} \int_{\partial
S }|\P\ciss\eta|\,\sigma\rr^{n-2} \right),\end{eqnarray}where $\eta$ denotes the outward-pointing unit normal along $\partial S$ and $\P\ciss$ denotes the orthogonal projection onto $\HH_i S$. 
In particular, if $\partial S=\emptyset$, it follows that \begin{eqnarray}\label{2gha}\left(\per({S})\right)^{\frac{Q-2}{Q-1}}\leq
C_{Isop} \int_S
|\MS|\,\per.\end{eqnarray}
\end{teo}

In order to better understand this result, we  refer the reader to Section \ref{mike0}; see, in particular, Example \ref{zazaz1} and Warning \ref{cpzzo}. We also formulate an open problem which is intimately connected with the previous result; see Problem \ref{pope}.

The proof of this result is heavily inspired from the classical
one, for which we refer the reader to the book by Burago and
Zalgaller \cite{BuZa}. A similar strategy was useful in
proving isoperimetric and Sobolev inequalities in abstract metric
setting such as weighted Riemannian manifolds and graphs; see
\cite{CGY}.

The starting point is a linear isoperimetric inequality (see
Proposition \ref{correctdimin}) that is used to obtain a {\it
monotonicity formula} for the $\HH$-perimeter; see Theorem
\ref{rmonin}. This formula is one of our main results. We remark that, exactly as in the Euclidean/Riemannian case, the
monotonicity inequality is an ordinary differential inequality,
concerning the first derivative of  the density-quotient
$$\frac{\per (S_t)}{t^{Q-1}},$$ where $S_t:=S\cap B_{\varrho}(x,t))$ and $x\in {\rm Int}\,(S\setminus C_S)$; see Section
\ref{wlineq}.  We observe that, in the case of smooth hypersurfaces without boundary, we shall show that for every $x\in {\rm Int}(S\setminus C_S)$ the
following ordinary differential inequality
$$ -\frac{d}{dt}\frac{\per(S_t)}{t^{Q-1}}\leq
\frac{1}{t^{Q-1}} \int_{S_t}|\MS|\,\per$$
holds for $\mathcal{L}^1$-a.e. $t>0$. Hence, if $\MS=0$ it follows that $\frac{d}{dt}\frac{\per(S_t)}{t^{Q-1}}\geq 0$ for 
 $\mathcal{L}^1$-a.e. $t>0$.

In Section \ref{perdindirindina} we will discuss some  explicit 
estimates  and then, in
 Section \ref{isopineq1}, we will  prove the Isoperimetric Inequality.

In Section \ref{asintper}
we  give some straightforward applications of the monotonicity
formula. More precisely, let $S\subset\GG$ be a  
hypersurface  of class $\cont^2$, let $x\in
{\rm Int} (S\setminus C_S)$ and, without loss of generality, assume that, near $x$, the horizontal mean curvature $\MS$
is bounded by a positive constant $\MS^0$. Then, we will show that
\[\per({S}_t)\geq \kappa_{\varrho}(\nn(x))\,t^{Q-1}
e^{-t\,\MS^0}\]as long as $t\searrow 0^+$, where
$\kappa_{\varrho}(\nn(x))$ denotes the   metric factor  at
$x$; see Corollary \ref{asynt}.
We also consider the case where $x\in C_S$;
see Corollary \ref{2asynt} and Corollary \ref{hasynt}.

Finally, in  Section \ref{sobineqg} we 
 discuss some related  inequalities which can 
be deduced by  the Isoperimetric Inequality, following a
 classical argument by Federer-Fleming
\cite{FedererFleming} and Mazja \cite{MAZ}. The main result is a Sobolev-type inequality for compact hypersurfaces without boundary.\begin{teo}Let $\GG$ be a $k$-step Carnot group endowed with a homogeneous metric $\varrho$ as in
Definition \ref{2iponhomnor1}. Let
$S\subset\GG$ be a compact hypersurface of class $\cont^2$  without
boundary. Let $\MS$ be the horizontal mean curvature of
$S$ and assume that $\MS\in L^1(S; \sigma\rr^{n-1})$.
Then\[\left(\int_S|\psi|^{\frac{Q-1}{Q-2}}\,\per\right)^{\frac{Q-2}{Q-1}}\leq
C_{Isop}\left( \int_{S}\left(|\psi|\,|\MS|
+|\qq\psi|\right)\,\per+ \sum_{i=2}^k  \left( {\bf r}(S)\right)^{i-1}   \int_S |\grad\ciss\psi|\,\sigma\rr^{n-1}\right) \]for every
$\psi\in\cont^1(S)$, where $C_{Isop}$ is the constant
appearing in Theorem \ref{ghaioiuuo0}.

\end{teo} 
\section{Carnot
groups, submanifolds and measures}\label{0prelcar}
\subsection{Sub-Riemannian Geometry of Carnot groups}\label{prelcar}
In this section we introduce basic definitions and main
features  of Carnot groups.
References for this large subject can be found in
\cite{CDG}, \cite{GN}, \cite{Gr1}, \cite{Mag}, \cite{Mi},
\cite{Montgomery}, \cite{P1, P2, P4}, \cite{Stric}. Let $N$  be a
${\cin}$-smooth connected $n$-dimensional manifold and let
$\HH\subset \TT N$ be an $\DH$-dimensional smooth subbundle of
$\TT N$. For any $x\in N$, let $\TT^{k}_x$ denote the vector
subspace of $\TT_x N$ spanned by a local basis of smooth vector
fields $X_{1}(x),...,X_{\DH}(x)$ for $\HH$ around $x$, together
with all commutators of these vector fields of order $\leq k$. The
subbundle $\HH$ is called {\it generic} if, for all $x\in N$,
$\dim \TT^{k}_x$ is independent of the point $x$ and {\it
horizontal} if $\TT^{k}_x = \TT N$, for some $k\in \N$. The pair
$(N,\HH)$ is a {\it $k$-step CC-space} if is generic and
horizontal and if $k=\inf\{r: \TT^{r}_x = \TT N \}$. In this case
\begin{equation*}0=\TT^{0}\subset
\HH=\TT^{\,1}\subset\TT^{2}\subset...\subset \TT^{k}=\TT
N\end{equation*} is a strictly increasing filtration of {\it
subbundles} of constant dimensions $n_i:=\dim
\TT^{i}\,\,i=1,...,{k}.$ Setting $(\HH_i)_x:=\TT^i_x\setminus
\TT^{i-1}_x,$ then $\grr(\TT_x N)=\oplus_{i=1}^k (\HH_k)_x$ is the
associated {\it graded Lie algebra}  at $x\in N$,  with respect to
the Lie product $[\cdot,\cdot]$. We set $\DH_i:=\dim
{\HH}_{i}=n_i-n_{i-1}\,(n_0=\DH_0=0)$ and, for simplicity,
$\DH:=\DH_1=\dim\HH$. The $k$-vector
$\overline{\DH}=(\DH,\DH_2,...,\DH_{k})$ is the {\it growth
vector} of $\HH$.

\begin{Defi}[Graded frame] We say that $\underline{X}=\{X_1,...,X_n\}$
is a { graded frame} for $N$  if $\{{X}_{i_j}(x): n_{j-1}<i_j\leq
n_j\}$ is a basis for ${\HH_j}_x$ for  any 
$j=1,...,k$ and for any $x\in N$.\end{Defi}

\begin{no}Let $E\subset\TT N$ be any smooth subbundle of $\TT N$. Throughout this paper, we  denote by $\XX^r(E)$ the space of  sections of class $\cont^r$ of $E$ $(r\geq 0)$. When $r=+\infty$, we  simply set $\XX(E)$. Furthermore, the space of differential  $p$-forms on $N$, is denoted as $\Om^p(N)$.\end{no}

\begin{Defi}\label{dccar} A { sub-Riemannian metric} $g\cc=\langle\cdot,\cdot\rangle\cc$ on $N$ is a
symmetric positive bilinear form on $\HH$. If $(N,\HH)$ is a
{CC}-space, the { {CC}-distance} $\dc(x,y)$ between $x, y\in N$ is
defined by
$$\dc(x,y):=\inf \int\sqrt{\langle
 \dot{\gamma},\dot{\gamma}\rangle\cc} dt,$$
where the infimum is taken over all piecewise-smooth horizontal
paths $\gamma$ joining $x$ to $y$.
\end{Defi}

In fact, Chow's Theorem implies that $\dc$ is metric on $N$
and that any two points can be joined with at least one horizontal
curve. The topology induced on $N$ by the {CC}-metric is equivalent
to the standard manifold topology; see \cite{Gr1},
\cite{Montgomery}.

This is the setting of sub-Riemannian geometry. An important class of these geometries is represented by {\it Carnot groups} which,
for many reasons, play  in sub-Riemannian geometry  an analogous
role to that of Euclidean spaces in Riemannian geometry.  For the geometry of Lie groups
we refer the reader to Helgason's book \cite{Helgason} and
Milnor's paper \cite{3}, while for  sub-Riemannian
geometry, to Gromov, \cite{Gr1}, Pansu, \cite{P1, P4}, and
Montgomery, \cite{Montgomery}.

A $k$-{\it{step Carnot group}}  $(\GG,\bullet)$ is an
$n$-dimensional, connected, simply connected, nilpotent and
stratified Lie group (with respect to the multiplication
$\bullet$). Let $0$ be the identity on $\GG$. The Lie algebra  $\gg\cong\TT_0\GG$ of $\GG$ is an $n$-dimensional vector space such that:\begin{equation*} {\mathfrak{g}}={\HH}_1\oplus...\oplus
{\HH}_k,\quad
 [{\HH}_1,{\HH}_{i-1}]={\HH}_{i}\quad\forall\,\,i=2,...,k,\,\,\,
 {\HH}_{k+1}=\{0\}.\end{equation*}

The first layer
  ${\HH}_1$ of the stratification of $\gg$ is called
{\it horizontal}  and denoted by $\HH$. Let
${\VV}:={\HH}_2\oplus...\oplus {\HH}_k$ be the {\it
vertical subspace} of $\gg$. We set
$\DH_i=\dim{{\HH}_i},$
 $n_i:=\DH_1+...+\DH_i$, for every $i=1,...,k$ ($\DH_1=\DH,\,n_k=n$). We assume that $\HH$
is generated by a frame $\underline{X\cc}:=\{X_1,...,X_{\DH}\}$ of
left-invariant vector fields. This frame can always be completed to a global,
 graded, left-invariant frame  $\underline{X}:=\{X_i:
 i=1,...,n\}$ for  $\gg$, in a way that
 ${\HH}_l={\mathrm{span}}_\R\big\{X_i:  n_{l-1}< i \leq
 n_{l}\big\}$ for $l=1,...,k$. In fact, the standard basis $\{\ee_i:i=1,...,n\}$ of $\Rn\cong\TT_0\GG$ can be
relabeled to be {\it adapted to the stratification}.
Note that each left-invariant vector field of the frame $\underline{X}$ is given by
${X_i}(x)={L_x}_\ast\ee_i\,(i=1,...,n)$, where ${L_x}_\ast$
is the differential of the left-translation by $x$.

\begin{no}\label{1notlne0}We  denote by $\P\ci:\gg\longrightarrow\HH_i$ the orthogonal projection map from $\gg$ onto $\HH_i$ for any $i=1,...,k$. In particular, we set $\P\cc:=\P{_{^{_{\HH_1}}}}$.  Analogously, we   denote by $\P\vv:\gg\longrightarrow\VV$ the orthogonal projection map from $\gg$ onto $\VV$.  
\end{no}

\begin{no}\label{1notazione}We  set $I\cc:=\{1,...,h\}$,
$I\cd:=\{n_1+1,...,n_2\}$,..., $I\vv:=\{h +1,...,n\}$. Unless
otherwise specified, Latin letters $i, j, k,...$  are used for
indices belonging to $I\cc$ and Greek letters
 $\alpha, \beta, \gamma,...$ for indices belonging to $I\vv$. The
  function \textquotedblleft order\textquotedblright\, $\mathrm{ord}:\{1,...,n\}\longrightarrow\{1,...,k\}$
is defined by $\mathrm{ord}(a):= i $, whenever  $n_{i-1}<a\leq
n_{i}$,  $i=1,...,k$.
\end{no}

We use  exponential coordinates of 1st
kind so that $\GG$ will be identified with its Lie algebra $\gg$,
via the (Lie group) exponential map $\exp:\gg\longrightarrow\GG$; see \cite{Vara}.
The {\it Baker-Campbell-Hausdorff
formula} gives the group law $\bullet$ of the group
$\GG$,  starting from a corresponding operation on the Lie algebra $\gg$.
In fact, one has
$$\exp(X)\bullet\exp(Y)=\exp(X\star Y)$$ for any $X,\,Y \in\gg$, where
${\star}:\gg \times \gg\longrightarrow \gg$ is the
 {\it Baker-Campbell-Hausdorff product} defined by \begin{eqnarray}\label{CBHf}X\star Y= X +
Y+ \frac{1}{2}[X,Y] + \frac{1}{12} [X,[X,Y]] -
 \frac{1}{12} [Y,[X,Y]] + \mbox{ brackets of length} \geq 3.\end{eqnarray}

In
exponential coordinates,
 the group law $\bullet$ on $\GG$ is polynomial and explicitly computable; see
\cite{Corvin}. Note that $0=\exp(0,...,0)$ and the inverse of each point
 $x=\exp(x_1,...,x_{n})\in\GG$ is given by ${x}^{-1}=\exp(-{x}_1,...,-{x}_{n})$.

When $\HH$ is endowed with a metric
$g\cc=\langle\cdot,\cdot\rangle\cc$, we say that $\GG$
 has a {\it sub-Riemannian structure}.  It is always possible to define a left-invariant Riemannian metric
  $g =\langle\cdot,\cdot\rangle$ on $\gg$ such that $\underline{X}$ is {\it
orthonormal} and $g_{|\HH}=\g$. Note that, if we fix a Euclidean metric on
$\Rn\cong \TT_0\GG$  such that $\{\ee_i: i=1,...,n\}$  is an orthonormal
basis, this metric extends to each $\TT_x\GG$ ($x\in\GG$)
by left-translations. Since Chow's Theorem trivially holds true for Carnot groups, the {\it
Carnot-Carath\'eodory distance} $\dc$ associated with $g\cc$ can
be defined and the pair $(\GG,\dc)$ turns out to be a complete metric
space where every couple of points can be joined by  at least one $\dc$-geodesic.
Carnot groups are {\it homogeneous groups}, in the sense that they
admit a 1-parameter group of automorphisms
$\delta_t:\GG\longrightarrow\GG$ $(t\geq 0)$ defined by
\begin{eqnarray*}\delta_t x
:=\exp\left(\sum_{j=1}^k\sum_{i_j\in I\cj}t^j\,x_{i_j}\ee_{i_j}\right),\end{eqnarray*}
for any $x=\exp\left(\sum_{j,i_j}x_{i_j}\ee_{i_j}\right)\in\GG.$ By definition, the
{\it homogeneous dimension} of $\GG$ is the positive integer  $\Qdim,$ 
coinciding with the {\it Hausdorff dimension} of $(\GG,\dc)$ as a
metric space;
 see \cite{Mi}, \cite{Gr1}.
\begin{Defi}\label{hometr}A continuous distance
$\varrho:\GG\times\GG\longrightarrow\R_+$ is called {\rm homogeneous}
if, and only if, the following hold:
\begin{itemize}\item[{\rm(i)}]$\varrho(x,y)=\varrho(z\bullet x,z\bullet
y)$ for every $x,\,y,\,z\in\GG$;
\item[{\rm(ii)}]$\varrho(\delta_tx,\delta_ty)=t\varrho(x,y)$ for
all $t\geq 0$. \end{itemize}\end{Defi}

The CC-distance $\dc$ is an example of homogeneous distance.
Another  example can be found in \cite{FSSC5}.\\

Any
Carnot group admits a smooth, subadditive, homogeneous
norm (see \cite{HeSi}), that is, there exists a continuous function
$\|\cdot\|_\varrho:\GG\times\GG\longrightarrow\R_+\cup \{0\}$  which is  smooth on
$\GG\setminus\{0\}$ and such
that:\begin{itemize}\item[{\rm(i)}]$\|x\bullet
y\|_\varrho\leq\|x\|_\varrho+\|y\|_\varrho$;
\item[{\rm(ii)}]$\|\delta_tx\|_\varrho=t\|x\|_\varrho\quad (t>
0)$;\item[{\rm(iii)}]$\|x\|_\varrho=0\Leftrightarrow x=0$;
\item[{\rm(iv)}]$\|x\|_\varrho=\|x^{-1}\|_\varrho$.
\end{itemize}

\begin{Defi}\label{2iponhomnor1}Let  $\varrho:\GG\times \GG\longrightarrow\R_+$
be a homogeneous distance such that:
\begin{itemize}\item[{\rm(i)}]$\varrho$ is (piecewise)
$\cont^1$;\item[{\rm(ii)}]$|\grad\cc\varrho|\leq 1$ at each
regular point of $\varrho$;
\item[{\rm(iii)}]${|x\cc|}\leq{\varrho(x)}$ for every $x\in\GG$, where
$\varrho(x)=\varrho(0,x)=\|x\|_\varrho$. Furthermore, we shall
assume that there exist constants ${\bf c}_i\in\R_+$ such that
$|x\ci|\leq {\bf c}_i
\varrho^i(x)$ for any $i=2,...,k.$\end{itemize}
\end{Defi}
\begin{es}\label{distusata}
A smooth homogeneous norm $\varrho$  on
$\GG\setminus \{0\}$, can be defined by setting
\begin{equation}\label{distusata0}
\|x\|_\varrho:=\left(|x\cc|^{\lambda}+C_2|x\cd|^{\lambda/2}+C_3|x\ctr|^{\lambda/3}+...+C_k|x\ck|^{\lambda/k}
\right)^{1/\lambda}, 
\end{equation}where $\lambda$ is any positive number evenly
divisible by $i=1,...,k$ and $|x\ci|$ denotes the
Euclidean norm of the projection $x\ci$ of $x$ onto the i-th layer
$\HH_i$ of  $\gg$.
\end{es}
\begin{es}\label{Kor}Let us consider the case of Heisenberg groups
$\mathbb{H}^r$; see Example \ref{epocase}. It can be shown that the  CC-distance $\dc$ satisfies all the   assumptions of Definition \ref{2iponhomnor1}. Another  example is the so-called {Koranyi
norm}, defined by
$$\|y\|_\varrho:=\varrho(y)=\sqrt[4]{|y\cc|^4+16t^2}\qquad(y=\exp(y\cc,t)\in\mathbb{H}^r),$$ is homogeneous and
  $\cin$-smooth  outside  
$0\in\mathbb{H}^r$ and  satisfies \rm (ii) \it and \rm (iii) \it of Definition
\ref{2iponhomnor1}.
\end{es}

Since we have fixed a Riemannian metric on $\gg$, we may define the left-invariant
co-frame $\underline{\omega}:=\{\omega_i:i=1,...,n\}$ dual to
$\underline{X}$. In fact, the {\it left-invariant 1-forms}
\footnote{That is, $L_p ^{\ast}\omega_I=\omega_I$ for every
$p\in\GG.$} $\omega_i$ are uniquely determined by the condition:
$$\omega_i(X_j)=\langle X_i,X_j\rangle=\delta_i^j\qquad \mbox{for every}\,\,i,
j=1,...,n,$$ where $\delta_i^j$ denotes \textquotedblleft Kronecker delta\textquotedblright.
Recall that the {\it structural constants} of the Lie algebra
$\gg$ associated with the left invariant frame $\underline{X}$ are
defined by
$$\SC^r_{ij}:=\langle [X_i,X_j],
 X_r\rangle\qquad\mbox{for every}\,\, i, j, r=1,...,n. $$
\noindent {They satisfy the following:
\begin{itemize}\item [{\rm (i)}]\, $\SC^r_{ij} +\SC^r_{ji}=0$\,\,\, (skew-symmetry); \item
[{\rm(ii)}]\, $\sum_{j=1}^{n} \SC^i_{jl}\SC^{j}_{rm} +
\SC^i_{jm}\SC^{j}_{lr} + \SC^i_{jr}\SC^{j}_{ml}=0$\,\,\, (Jacobi's
identity).\end{itemize} \noindent The stratification  of
the Lie algebra implies the following structural  property:
\[ X_i\in {\HH}_{l},\, X_j \in
{\HH}_{m}\Longrightarrow [X_i,X_j]\in {\HH}_{l+m}.\]

\begin{Defi}[Matrices of structural constants]\label{nota}We set\begin{itemize}\item[{\rm(i)}]
$C^\alpha\cc:=[\SC^\alpha_{ij}]_{i,j\in
I\cc}\in\mathcal{M}_{h\times h}(\R)\,\,\,\qquad\forall\,\,\alpha\in I\cd$;
\item[{\rm(ii)}] $ C^\alpha:=[\SC^\alpha_{ij}]_{i, j=1,...,n}\in
\mathcal{M}_{n\times n}(\R)\qquad\forall\,\,\alpha\in
I\vv.$\end{itemize}The  linear operators associated with these matrices are denoted in the same way.
\end{Defi}

\begin{Defi}\label{parzconn}
Let $\nabla$ be the  (unique)  left-invariant
Levi-Civita connection on $\GG$ associated with the metric $g$. If
$X, Y\in\XH:=\cin(\GG,\HH)$, we set  $\gc_X Y:=\PH(\nabla_X
Y).$
\end{Defi}

\begin{oss} The operation $\gc$ is  called {\it horizontal $\HH$-connection}; see
\cite{Monteb} and references therein. By using  the properties of the structural constants of
the Levi-Civita connection, one can show that $\gc$ is {\rm flat},  that is, 
$\gc_{X_i}X_j=0$ for every $i,j\in I\cc$.    $\gc$ turns out to be
{\rm compatible with the sub-Riemannian metric} $g\cc$, that is, 
$X\langle Y, Z \rangle=\langle \gc_X Y, Z \rangle
+ \langle  Y, \gc_X Z \rangle$ for all $X, Y, Z\in
\XH$. Moreover, $\gc$ is  {\rm torsion-free}, that is, 
$\gc_X Y - \gc_Y X-\PH[X,Y]=0$ for all $X, Y\in \XH$. All these properties easily follow  from the very definition of $\gc$
together with the corresponding properties of the Levi-Civita connection
$\nabla$ on $\GG$. Finally, we recall a fundamental property of $\nabla$, that is, 
\begin{equation*}\nabla_{X_i} X_j =
\frac{1}{2}\sum_{r=1}^n\left( \SC_{ij}^r  - \SC_{jr}^i +
\SC_{ri}^j\right) X_r\qquad \forall\,\,i,\,j=1,...,n.\end{equation*}
\end{oss}

\begin{Defi}
If $\psi\in\cin({\GG})$ we define the  horizontal gradient of
$\psi$  as the  unique horizontal vector field $\dg \psi$ such
that
$\langle\dg \psi,X \rangle= d \psi (X) = X
\psi$ for all $X\in \XH.$  The  horizontal
divergence of $X\in\XH$, $\divh X$, is defined, at each point
$x\in \GG$, by
$$\divh X(x):= \mathrm{Trace}\big(Y\longrightarrow \gc_{Y} X
\big)(x)\quad(Y\in \HH_x).$$
\end{Defi}

\begin{es}[Heisenberg groups $\mathbb{H}^r$] \label{epocase}Let $\mathfrak{h}_r:=\TT_0\mathbb{H}^r=\R^{2r + 1}$
denote the Lie algebra of  $\mathbb{H}^r$. The only non-trivial algebraic
rules are given by $[\ee_{i},\ee_{i+1}]=\ee_{2r + 1}$ for every $i=2k+1$ where
$k=0,...,r-1$. We have
$\mathfrak{h}_r=\HH\oplus \R\ee_{2r+1},$ where $\HH={\rm
span}_{\R}\{\ee_i:i=1,...,2r\}$ and the second layer turns out to be the 1-dimensional center of $\mathfrak{h}_r$. The Baker-Campbell-Hausdorff formula determines the group
law $\bullet$. For every
$x=\exp\left(\sum_{i=1}^{2r+1}x_iX_i\right),\,
y=\exp\left(\sum_{i=1}^{2r+1}y_i X_i\right)\in \mathbb{H}^r$ one has\begin{center}$x\bullet y =\exp \left(x_1 + y_1,x_2+y_2,...,x_{2r} +
y_{2r}, x_{2r+1} + y_{2r+1} + \frac{1}{2}\sum_{k=1}^{r} (x_{2k-1}
y_{2k}- x_{2k} y_{2k-1})\right).$\end{center} The matrix of structural constants is given by
$$C\cc^{2r+1}:=\left|
\begin{array}{cccccc}
  0 & 1 & 0 & 0 & \cdot &\\
  -1 & 0 & 0 & 0 & \cdot &\\0 & 0 & 0 & 1 & \cdot &\\0 & 0 & -1 & 0 & \cdot &\\\cdot & \cdot  & \cdot
    & \cdot  & \cdot &\!\!\!\!
\end{array}%
\right|.$$\end{es}

\subsection{Hypersurfaces, homogeneous measures and geometric structures}\label{dere}
 Hereafter, $\mathcal{H}^m_{\varrho}$ and $\mathcal{S}^m_{\varrho}$ will denote
 the Hausdorff measure and the spherical
Hausdorff measure, respectively, associated with a homogeneous distance
$\varrho$ on $\GG$\footnote{We recall
that:\begin{itemize}\item[{\rm(i)}]
$\mathcal{H}_{{\varrho}}^{m}({S})=\lim_{\delta\to 0^+}\mathcal
{H}_{{\varrho},\delta}^{m}({S})$,  where
$${\mathcal{H}}_{{\varrho},\delta}^{m}({S})=
\inf\left\{\sum_i\left(\mathrm{diam}_\varrho ({C}_i)\right)^{m}:\;{S}
\subset\bigcup_i
{C}_i;\;\mathrm{diam}_\varrho({C}_i)<\delta\right\}$$ and the
infimum is taken with respect to any non-empty family of closed
subsets $\{{C}_i\}_i\subset\GG$;\item[{\rm(ii)}]
$\mathcal{S}_{{\varrho}}^{m}({S})=\lim_{\delta\to 0^+}\mathcal
{S}_{{\varrho},\delta}^{m}({S})$,  where
$${\mathcal{S}}_{{\varrho},\delta}^{m}({S})=
\inf\left\{\sum_i\left(\mathrm{diam}_\varrho({B}_i)\right)^{m}:\;{S}
\subset\bigcup_i {B}_i;\;\mathrm{diam}_\varrho
({B}_i)<\delta\right\}$$ and the infimum is taken with respect to
closed $\varrho$-balls ${B}_i$.\end{itemize}}

The Riemannian left-invariant volume form on $\GG$ is defined by
$\sigma\rr^n:=\bigwedge_{i=1}^n\omega_i\in
\Om^n(\GG)$. Hereafter we will set $\Vol:=\sigma\rr^n$. This  is the Haar
measure of $\GG$ and equals (the push-forward of) the
$n$-dimensional Lebesgue measure $\mathcal{L}^n$ on $\Rn\cong\TT_0\GG$.

\begin{Defi}\label{caratt}
Let $S\subset\GG$ be a  hypersurface  of class $\cont^1$. We
say that $x\in S$ is a {\rm characteristic point} if
$\dim\,\HH_x = \dim (\HH_x \cap \TT_x S)$ or, equivalently, if
$\HH_x\subset\TT_x S$. The {\rm characteristic set} $C_S$ of $S$ is the set of all characteristic points, that is,  $ C_S:=\{x\in S : \dim\,\HH_x = \dim (\HH_x \cap
\TT_x S)\}.$
\end{Defi}
Note that a hypersurface $S\subset\GG$ oriented by the outward-pointing normal
vector $\nu$ turns out to be {\it non-characteristic} if, and only if, the
horizontal space $\HH$ is {\it transversal} to $S$. We have to remark that the $(Q-1)$-dimensional CC-Hausdorff measure of $C_S$  vanishes, that is, 
$\mathcal{H}_{CC}^{Q-1}(C_S)=0$;  see \cite{Mag}. The
$(n-1)$-dimensional {\it Riemannian measure} along $S$ can be defined
by setting $\sigma^{n-1}\rr:=(\nu\LL\Vol)|_{S},$  where  $\LL$ denotes
the \it contraction operator \rm (or, \it interior product\rm) on differential
 forms; see footnote \ref{contraction}. Just as in \cite{Monte, Monteb} (see also
\cite{vari}, \cite{HP}, \cite{RR}), since we are studying \textquotedblleft smooth\textquotedblright
           hypersurfaces, instead of the variational definition \`a la De Giorgi (see, for instance,
\cite{FSSC3, FSSC5}, \cite{GN}, \cite{Monte} and bibliographies
therein) we  define an $(n-1)$-differential form which, by
integration on smooth boundaries, yields the usual $\HH$-perimeter
measure.

\begin{Defi}[$\per$-measure]\label{sh}
Let $S\subset\GG$ be a $\mathbf{C}^1$  non-characteristic
hypersurface and  $\nu$ the outward-pointing unit normal vector. We call {\rm
unit $\HH$-normal} along $S$ the normalized projection of $\nu$
onto $\HH$, that is,  $\nn: =\frac{\PH\nu}{|\PH\nu|}$. The
{\rm $\HH$-perimeter} along ${S}$ is the homogeneous measure associated
with the $(n-1)$-differential form $\per$ on $S$ given by  $\per:=(\nn \LL
\Vol)|_S.$ \end{Defi}If we allow $S$ to have characteristic
points, one extends $\per$ by setting $\per\res C_{S}= 0$. It
turns out that $\per = |\PH \nu |\,\sigma^{n-1}\rr $ and that $C_S=\{x\in S : |\PH\nu|=0\}$. We also remark that
$ \per(S\cap B)=k_{\varrho}(\nn)\,\mathcal{S}_{\varrho}^{Q-1}({S}\cap B)$ for all $B\in \mathcal{B}or(\GG)$, where the  bounded density-function $k_{\varrho}(\nn)$, called {\it metric factor}, only
depends on $\nn$ and on the (fixed) homogeneous metric $\varrho$ on
$\GG$; see \cite{Mag}; see also
Section \ref{blow-up}.

\begin{Defi}\label{carca}Setting $\mathit{H}_x S:=\HH_x\cap\TT_x S$ for every $x\in
S\setminus C_S$, yields $\HH_x=\mathit{H}_x
S\oplus{\rm span}_\R\{\nn(x)\}$ and this uniquely defines the subbundles
$\HS$ and $\nn S$, called, respectively, {\rm horizontal tangent
bundle} and {\rm horizontal normal bundle}. Note that $\dim \HH_x S=\dim \HH_x-1=2n-1$ at each non-characteristic point $x\in S\setminus C_S$.
\end{Defi}

The horizontal tangent space  is well defined even at the characteristic set $C_S$. More precisely, in this case $\HH_x S=\HH_x$ for every $x\in C_S$. However $\dim\HH_xS=\dim\HH_x=\DH$ for any $x\in C_S$.

For the sake of simplicity, unless otherwise mentioned, \it we  assume that $S\subset\GG$ is a $\cont^2$  non-characteristic hypersurface. \rm We first remark
that, if $\tsc$ is the connection induced  on $\TT S$ from the
Levi-Civita connection $\nabla$ on $\TG$\footnote{$\tsc$ is the Levi-Civita connection on $S$; see \cite{Ch1}.},
then $\tsc$ induces a \textquotedblleft partial connection\textquotedblright $\gs$ on
$\HS\subset\TT{S}$, that is defined by\footnote{The map
$\P\ss:\TT{S}\longrightarrow\HS$ denotes the orthogonal projection
onto $\HS$.}
$$\gs_XY:=\P\ss(\tsc_XY)\qquad\,\forall\,\,X,Y\in\XX^1(\HS):=\cont^1(S, \HS).$$
Note that the orthogonal decomposition
 $\HH=\HS\oplus\nn S$ enable us to define $\nabla^{_{\HS}}$ in analogy with the
 definition of ``connection on submanifolds''; see
\cite{Ch1}. More precisely, one has$$\gs_XY=\gc_X Y-\langle\gc_X
Y,\nn\rangle\,\nn\qquad\forall\,\,X,Y\in\XX^1(\HS).$$

\begin{Defi}Let $S\subset\GG$ be a $\cont^2$  non-characteristic hypersurface and  $\nu$ the outward-pointing unit normal vector. 
The $\HS$-{gradient}  $\qq\psi$ of $\psi\in \cont^1({S})$ is the unique
horizontal tangent vector field such that
$\langle\qq\psi,X \rangle= d \psi (X) = X
\psi$ for all $X\in \XX^1(\HS)$. We denote by $\div\ss$
the divergence operator on $\HS$, that is,  if $X\in\XX^1(\HS)$ and $x\in
{S}$, then
$$\div\ss X (x) := \mathrm{Trace}\big(Y\longrightarrow
\gs_Y X \big)(x)\quad\,(Y\in \HH_xS).$$The \rm horizontal 2{nd}
fundamental form \it of ${S}$ is the continuous map given by
${B\cc}(X,Y):=\langle\gc_X Y, \nn\rangle$ for every $X, Y\in\XX^1(\HS)$. 
The {\rm horizontal mean curvature} $\MS$ is  the trace of the linear operator ${{B}\cc}$,
that is,   $\MS:={\rm
Tr}B\cc=-\div\cc\nn.$ We set
\begin{itemize}\item[{\rm(i)}]$\varpi_\alpha:=\frac{\langle X_\alpha, \nu\rangle}{|\PH\nu|}\qquad (\nu_\alpha:=\langle X_\alpha, \nu\rangle)\qquad \forall\,\,\alpha\in
I\vv$;\item[{\rm(ii)}]$\varpi:=\frac{\P\vv\nu}{|\PH\nu|}=\sum_{\alpha\in I\vv}\varpi_\alpha
X_\alpha$;\item[{\rm(iii)}] $C\cc:=\sum_{\alpha\in {I\cd}}
\varpi_\alpha\,C^\alpha\cc$.\end{itemize}
\end{Defi}

Note that $\frac{\nu}{|\PH\nu|}=\nn+\varpi$. The horizontal 2nd
fundamental form ${B\cc}(X,Y)$ is a (continuous) bilinear form of
$X$ and $Y$. However, in general, $B\cc$ {\it is not symmetric} and so it can be written as a sum of two matrices, one
symmetric and the other skew-symmetric, that is,  $B\cc= S\cc + A\cc.$
It turns out that  $A\cc=\frac{1}{2}\,C\cc\big|_{\HS}$; see
 \cite{Monteb}.

\begin{Defi}\label{movadafr}Let $S\subset\GG$ be a $\cont^2$  hypersurface with boundary $\partial S$.  We  call {\rm adapted frame to $S$} any graded orthonormal frame
 $\underline{\tau}:=\{\tau_1,...,\tau_n\}$ for $\gg$  such that:
\begin{itemize}
 \item $ \tau_1(x)=\nn(x) \quad\forall\,\,x\in \overline{S}\setminus C_S,$\item $\HH_x S=\mathrm{span}\{ \tau_2(x),...,\tau_{\DH}(x)\}\quad\forall\,\,x\in \overline{S}\setminus C_S;$\item $\tau_\alpha:=
X_\alpha.$
\end{itemize}
Furthermore set
$\TB_\alpha:=\tau_\alpha -
\varpi_\alpha\tau_1$ for every $\alpha\in I\vv$. Note that
$\VV_xS= \mathrm{span}_{\R}\{\TB_\alpha(x): \alpha\in I\vv\},$
where $\VV_x S$ is the orthogonal complement of $\HH_x S$ in
$\TT_x S$, that is,   $\TT_x S=\HH_x S\oplus\VV_x S.$\end{Defi}
It is worth remarking that
  $$\underline{\tau}=\{\underbrace{\tau_1}_{=\nn},
 \underbrace{\tau_2,...,\tau_{\DH}}_{\mbox{\tiny{o.n. basis of}}\,\HS},\underbrace{\tau_{\DH+1},...,\tau_n}_{\mbox{\tiny o.n. basis of}\,\VV}\}.$$

\begin{oss}[Induced stratification on $\TS$; see \cite{Gr1}]\label{indbun}The stratification of  $\gg$ induces a  natural  decomposition
of the tangent space of any smooth submanifold of $\GG$. Let us
analyze the case of a hypersurface $S\subset\GG$. To this aim, at
each point $x\in S$, we intersect  $\TT_x S\subset\TT_x\GG$ with
$\TT_x^i\GG=\oplus_{j=1}^i(\HH_j)_x$. Setting
$\TT^iS:=\TS\cap\TT^i\GG,$ $n'_i:=\dim\TT^iS$,
$\HH_iS:=\TT^iS\setminus \TT^{i-1}S$ and $\HS=\HH_1S$, yields
$\TS:=\oplus_{i=1}^k\HH_iS$ and $\sum_{i=1}^kn'_i=n-1.$
Henceforth, we shall set $\VS:=\oplus_{i=2}^k\HH_iS$. It turns out
that the Hausdorff dimension of any smooth hypersurface $S$ is
$Q-1=\sum_{i=1}^k i\,n'_i$; see \cite{Gr1}, \cite{P4},
\cite{FSSC5}, \cite{Mag, Mag3}. Furthermore, if the horizontal
tangent bundle $\HS$ is generic and horizontal, then the couple
$(S, \HS)$ turns out to be a $k$-step CC-space; see Section
\ref{prelcar}.\end{oss}

\begin{es}Let  $S\subset\mathbb{H}^n$ be a smooth hypersurface. If $n=1$,
the horizontal tangent bundle $\HS$  is  $1$-dimensional at each non-characteristic point. On the constrary, if $n>1$, then $\HS$ turns out to be generic and horizontal along any non-characteristic domain
$\UU\subseteq S$.
\end{es}

\begin{Defi}\label{iuoi}Let $N\subset\GG$ be a  $(n-2)$-dimensional submanifold of class $\cont^1$.
At each point $x\in N$, the horizontal tangent space  is given by
$\HH_x N:=\HH_x\cap\TT_x N$. We say that $N$  is  {\rm non-characteristic}  at $x\in N$  if there exist two linearly
independent vectors $\nn^1,\,\nn^2\in\HH_x$ which are transversal
to $N$ at $x$. Without loss of generality, we can always assume that
$\nn^1,\,\nn^2$ are orthonormal and such that $|\nn^1 \wedge\nn^2
|=1$. If this condition holds for every $x\in N$, we say
that $N$ is  {\rm non-characteristic}. In
this case, we define in the obvious way the associated vector
bundles $\HH N(\subset \TT N)$ and $\nn N$, called, respectively,
{\rm horizontal tangent bundle} and {\rm horizontal normal
bundle}. Note that $\HH_x:=\HH_x N\oplus{\nn}_xN$, where ${\nn}_xN\cong {\rm
span}_\R\{\nn^1(x)\wedge\nn^2(x)\}.$
\end{Defi}

\begin{Defi}[see \cite{Mag}]\label{carsetgen} Let $N\subset \GG$ be a  
$(n-2)$-dimensional submanifold  of class $\cont^1$. The {\rm characteristic set}
$C_N$ is the set of all characteristic points of $N$.
Equivalently, one has $C_N:=\{x\in N : \dim\,\HH_x -\dim (\HH_x
\cap \TT_x N)\leq 1\}.$
\end{Defi}
Let $N\subset\GG$ be a  submanifold  of class $\cont^1$; then the $(Q-2)$-dimensional Hausdorff measure (associated
with a homogeneous metric $\varrho$ on $\GG$) of $C_N$ is $0$,
that is,  $\mathcal{H}_{\varrho}^{Q-2}(C_N)=0$; see \cite{Mag}.

\begin{Defi}[$\nis$-measure]\label{dens}
Let $N\subset\GG$ be a  $(n-2)$-dimensional
non-characteristic submanifold  of class $\mathbf{C}^1$; let $\nn^1, \nn^2\in\XX^0(\nn N):=\cont(N,\nn N)$ be as in
Definition \ref{iuoi} and set $\nn:=\nn^1\wedge \nn^2$. Equivalently, we are assuming that $\nn$ is a horizontal unit  normal
$2$-vector field along $N$. Then, we define a $(Q-2)$-homogeneous measure
$\nis$ on $N$ by setting
$\nis:=(\nn \LL \Vol)|_N$. 
\end{Defi}
The measure $\nis$ is  the
contraction\footnote{For the most general
 definition of  $\LL$, see \cite{FE}, Ch.1.} of the top-dimensional volume form $\Vol$ by
the\footnote{It is unique, up to the sign.} horizontal unit  normal
$2$-vector $\nn=\nn^1\wedge \nn^2$ which spans $\nn N$. Hence, $\nis$
 can be represented in terms of the
$(n-2)$-dimensional Riemannian measure $\sigma\rr^{n-2}$. In fact, let $\nu N$ denote the  normal bundle of $N$
and let $\nu_1\wedge \nu_2\in \XX^0(\nu N)$ be a unit normal $2$-vector
field orienting $N$. By standard Linear Algebra, we get
that
$\nn=\frac{\PH\nu_1\wedge \PH\nu_2}{|\PH\nu_1\wedge
\PH\nu_2|}$.  Moreover, it turns out that\[\nis = |\PH
\nu_1\wedge\PH \nu_2 |\,\sigma^{n-2}\rr. \]Note  that if $C_N\neq
\emptyset$, then $C_N=\{x\in N : |\PH \nu_1\wedge\PH \nu_2 |=0\}$
and  $\nis$ can be extended up to $C_N$ just by setting $\nis\res
C_{N}= 0$. By construction,  $\nis$ is $(Q-2)$-homogeneous with
respect to Carnot dilations $\{\delta_t\}_{t>0}$, that is, 
 $\delta_t^{\ast}\nis=t^{Q-2}\nis$. Furthermore,  $\nis$ is
equivalent, up to a bounded density-function called metric-factor,
to the $(Q-2)$-dimensional Hausdorff
 measure associated with a
homogeneous distance $\varrho$ on $\GG$; see \cite{Mag3}. For the sake of completeness,  we recall some
results obtained by Magnani and Vittone \cite{Mag3} and Magnani
\cite{Mag8}.  
\begin{teo}[Blow-up for $(n-2)$-dimensional submanifolds; see
\cite{Mag3}]\label{Blowupfor}
 Let $N\subset\GG$ be a $(n-2)$-dimensional
submanifold of class $\cont^{1, 1}$, let $x\in N$ be a
non-characteristic point and let $\delta^x_t:\GG\longrightarrow\GG$ be the Carnot homothety centered at $x$. Then
\begin{equation}\label{bkaza} 
\delta^x_{\frac{1}{r}}N\cap B_\varrho(0,1)\longrightarrow
\mathcal{I}^2(\nn(x))\cap B_\varrho(0,1)
\end{equation}as long as $r\rightarrow
0^+$, where  $\mathcal{I}^2(\nn(x))$ denotes the
$(n-2)$-dimensional subgroup of $\GG$ given by
$$\mathcal{I}^2(\nn(x)):=\left\{y\in\GG : y=\exp (Y)\,\, \mbox{for any} \,\,Y\in\gg \,\,\mbox{such that} \,\,Y\wedge
\nn(x)=0\right\}$$ and  $\nn=\nn^1\wedge\nn^2$ is a unit horizontal
normal $2$-vector along $N$.  If $\nu=\nu_1\wedge\nu_2$
is a unit normal $2$-vector field orienting $N$, then
\[\lim_{r\rightarrow 0^+}\frac{\sigma\rr^{n-2}(N\cap B_\varrho(x,
r))}{r^{Q-2}}=\frac{\kappa(\nn(x))}{|\PH \nu
(x)|},\]where $\kappa(\nn(x)):=\nis\left(
\mathcal{I}^2(\nn(x))\cap B_\varrho(0,1)\right)$ is a  positive and bounded
density-function, called {\rm metric factor} and $\PH$ is the orthogonal projection operator extended to horizontal $2$-vectors.
\end{teo}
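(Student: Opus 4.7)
The plan is to proceed in three stages: localize at $x$ via left-translation, establish the blow-up by a quantitative Pansu-differentiability argument, and then extract the density and integral formulas. By left-invariance of $\bullet$ and of the dilations $\{\delta_t\}$, one may reduce to $x=0$. Since $N$ is non-characteristic at $0$, the implicit function theorem produces two defining functions $f_1,f_2\in\cont^{1,1}$ on a neighbourhood $U$ of $0$ with $N\cap U=\{f_1=f_2=0\}$ and $\dg f_i(0)=\nn^i(0)$, so that $\dg f_1(0)\wedge\dg f_2(0)=\nn(0)$.

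Next I would identify the blow-up limit using Pansu differentiability. The Pansu differential of $f_i$ at $0$ is the homogeneous-of-degree-one function $\tilde f_i(y):=\langle\nn^i(0),y\cc\rangle$, whose common zero set coincides with
\[\{\tilde f_1=\tilde f_2=0\}=\{y=\exp(Y)\in\GG:Y\wedge\nn(0)=0\}=\mathcal{I}^2(\nn(0)).\]
The $\cont^{1,1}$ hypothesis furnishes a uniform quadratic estimate $|f_i(\delta_r y)-r\tilde f_i(y)|\leq C r^2$ on a fixed compact set, hence the rescalings $f_i^r:=r^{-1}(f_i\circ\delta_r)$ converge uniformly to $\tilde f_i$ on $\overline{B_\varrho(0,1)}$. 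Since $\dg f_1(0)\wedge\dg f_2(0)\neq 0$, transversality is preserved in the limit and one obtains the local Hausdorff convergence
\[\delta_{1/r}(N)\cap B_\varrho(0,1)\longrightarrow\mathcal{I}^2(\nn(0))\cap B_\varrho(0,1).\]

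For the density, I would use the identity $\nis=|\PH\nu_1\wedge\PH\nu_2|\,\sigma\rr^{n-2}$ together with the $(Q-2)$-homogeneity $\delta_t^\ast\nis=t^{Q-2}\nis$, which gives
\[\frac{\nis(N\cap B_\varrho(0,r))}{r^{Q-2}}=\nis\bigl(\delta_{1/r}(N)\cap B_\varrho(0,1)\bigr)\longrightarrow\nis\bigl(\mathcal{I}^2(\nn(0))\cap B_\varrho(0,1)\bigr)=\kappa(\nn(0)),\]
by the preceding step and the continuity of $|\PH\nu_1\wedge\PH\nu_2|$ near the non-characteristic point $0$; dividing by $|\PH\nu(0)|$ yields the asserted density for $\sigma\rr^{n-2}$. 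Positivity and boundedness of $\kappa$ are immediate: $\mathcal{I}^2(\nn)$ is an $(n-2)$-dimensional subgroup meeting the open unit $\varrho$-ball in a set of positive $(Q-2)$-homogeneous measure, and $\kappa$ depends continuously on the unit horizontal $2$-vector $\nn$, a compact parameter space. For the integral identity, the pointwise density result identifies $\kappa(\nn(\cdot))$ as the Radon-Nikodym derivative $d\nis/d\mathcal{S}^{Q-2}_\varrho$ on $N\setminus C_N$; since $\mathcal{H}^{Q-2}_\varrho(C_N)=0$ forces both $\nis(C_N)=0$ and $\mathcal{S}^{Q-2}_\varrho(C_N)=0$, integration over $N$ produces $\nis(N)=\int_N\kappa(\nn)\,d\mathcal{S}^{Q-2}_\varrho$.

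The main obstacle lies in the quantitative convergence underlying the blow-up: one must pass to the limit inside the measure $\nis$, not merely at the level of sets. This requires uniform $\cont^0$ convergence of the rescaled defining functions together with preserved transversality, enough to build uniform local Lipschitz parametrizations of $\delta_{1/r}(N)\cap B_\varrho(0,1)$ over $\mathcal{I}^2(\nn(0))\cap B_\varrho(0,1)$ and hence control the push-forwards $\delta_{1/r}^\ast(\nis\res N)$. The $\cont^{1,1}$ hypothesis is precisely what yields the required quadratic Taylor remainder in non-isotropic coordinates; weaker regularity would still give pointwise Pansu differentiability and set-theoretic convergence, but not the uniformity needed for the measure-theoretic conclusions.
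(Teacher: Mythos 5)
The paper offers no proof of Theorem \ref{Blowupfor} to compare against: the result is imported verbatim from Magnani--Vittone \cite{Mag3} (see also \cite{Mag8}), so your argument can only be measured against the paper's proof of the codimension-one analogue, Theorem \ref{BUP}, Case (i). There the author writes $x^{-1}\bullet S$ as an exponential graph over $\ee_1^\perp$ via the implicit function theorem, Taylor-expands the graph function, and reads off both the limit set and the measure convergence from the explicit parametrization of the rescaled graphs. Your route is the level-set version of the same idea: two $\cont^{1,1}$ defining functions, uniform convergence of $r^{-1}(f_i\circ\delta_r)$ to the Pansu differentials $\langle\nn^i(0),y\cc\rangle$, and identification of the limit set as the codimension-two vertical subgroup. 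The substance is equivalent, and you correctly isolate the one genuinely delicate step: Hausdorff convergence of the rescaled sets does not by itself give convergence of $\nis\bigl(\delta_{1/r}N\cap B_\varrho(0,1)\bigr)$, which is what the density formula needs. Your remedy (the rescaled horizontal gradients $\dg f_i\circ\delta_r$ converge uniformly to the constants $\nn^i(0)$, giving uniform transversality, hence uniform Lipschitz parametrizations over $\mathcal{I}^2(\nn(0))$ whose Jacobians converge) is the standard one, and is precisely what the graph approach of Theorem \ref{BUP} builds in from the outset. Three small cautions. First, the implicit function theorem only gives defining functions whose horizontal gradients span the horizontal normal space; they must be normalized so that $\dg f_i(0)=\nn^i(0)$ before the zero set of the Pansu differentials can be identified with $\mathcal{I}^2(\nn(0))$. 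Second, the condition ``$Y\wedge\nn(x)=0$'' in the statement, read literally for the $2$-vector $\nn=\nn^1\wedge\nn^2$, describes $\mathrm{span}\{\nn^1,\nn^2\}$ rather than an $(n-2)$-dimensional subgroup; the set you actually compute, $\{y:\langle\nn^i(0),y\cc\rangle=0,\ i=1,2\}$, is the intended one, so this is a notational defect inherited from the statement rather than an error of yours. Third, upgrading the pointwise density limit to the identity $\nis=\kappa(\nn)\,\mathcal{S}^{Q-2}_\varrho$ on $N\setminus C_N$ requires a Federer-type differentiation theorem (Theorem 2.10.17 in \cite{FE}) together with attention to the radius-versus-diameter normalization in the definition of $\mathcal{S}^{Q-2}_\varrho$; this is routine but should be said.
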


It is worth observing that the convergence in \eqref{bkaza} is understood with respect to the
Hausdorff distance of sets.  We finally recall a recent
result about the 
size of  \it horizontal tangencies \rm (that is,   characteristic sets, in our therminology) to non-involutive distributions; see  \cite{Bal3}. The following theorem can be regarded as a  generalized version of Derridj's Theorem; see  Theorem 4.5 in \cite{Bal3}. 
\begin{teo} \label{baloghteo}Let $\GG$ be a $k$-step Carnot group.

\begin{itemize}\item[{\rm (i)}]If $S\subset \GG$ is a hypersurface
of class $\cont^2$, then the Euclidean-Hausdorff dimension of the
characteristic set $C_S$ of $S$ satisfies the inequality
$\dim_{\rm Eu-Hau}(C_S)\leq n-2.$\item[{\rm (ii)}]Let
$\VV=\HH^\perp\subset\TG$ be such that $\dim \VV\geq 2$. If
$N\subset \GG$ is a $(n-2)$-dimensional submanifold of class
$\cont^2$, then the Euclidean-Hausdorff dimension of the
characteristic set $C_N$ of $N$ satisfies the inequality
$\dim_{\rm Eu-Hau}(C_N)\leq n-3.$\end{itemize}
\end{teo}

It is worth observing, however, that more precise results can be obtained only with a further analysis of the algebraic structure of the Lie algebra of the given group.

\begin{oss}[What happens if $\dim \VV=1?$] \label{11baloghteo}First note that  if $\dim \VV=1$, then $\GG$ is a Carnot group of step $2$. In addition, the codimension of $\HH$ is $1$ so that $n=\dim\,\gg=h+1$. The most important example of such a Carnot group is, of course, the Heisenberg group $\mathbb H^r\,(r\geq 1)$. In this case, by using the results in \cite{Bal3},
we infer that: \begin{itemize}
 \item if
$N\subset \mathbb H^r$ is a $(n-2)$-dimensional submanifold of class
$\cont^2$, then the Euclidean-Hausdorff dimension of the
characteristic set $C_N$ of $N$ satisfies the inequality
$\dim_{\rm Eu-Hau}(C_N)\leq r$, where $n=2r+1$.
\end{itemize}Hence $\dim_{\rm Eu-Hau}(C_N)\leq n-3$ if, and only if, $r>1$. Furthermore, it is not difficult to show that the same assertion holds for the direct product $\mathbb H^r\times \R^m$ of the Heisenberg group $\mathbb H^r$ with a Euclidean group $\R^m$.  
\end{oss}

\begin{es}[A \textquotedblleft bad\textquotedblright example with $\dim \VV=1$] We here construct an elementary example of a $4$-dimensional $2$-step Carnot group $\GG$ in which there may exist  smooth $2$-dimensional horizontal submanifolds. In order to do this, let $\underline{X}=\{X_1, X_2, X_3, X_4\}$ be a basis of $\gg$ and assume that $[X_1, X_2]=[X_2, X_3]=X_4$ are the only non-trivial commuting relations. Obviously, we have $\HH=\mathrm{span}_\R\{X_1, X_2, X_3\}$, $\HH_2=\mathrm{span}_\R\{X_4\}$. In fact, since $[X_1, X_3]=0$, by applying
Frobenious' Theorem  it follows that the $2$-plane $\{(x_1,...,x_4)\in\GG: x_2=x_4=0\}$ is an integrable horizontal plane. 
This example can be realized, for instance, by the following  vector fields: $X_1=\partial_{x_1}-\frac{x_2}{2}\partial_{x_4}$, $X_2=\partial_{x_2}+\frac{(x_1+x_3)}{2}\partial_{x_4}$, $X_3=\partial_{x_3}-\frac{x_2}{2}\partial_{x_4}$, $X_4=\partial_{x_4}$. \end{es}

\section{Preliminary tools}\label{Preliminaries}

\subsection{Coarea Formula for the
$\HS$-gradient}\label{COAR}

\begin{teo}\label{TCOAR}Let $S\subset\GG$ be a compact hypersurface  of class $\mathbf{C}^2$  and
let $\varphi\in\cont^1(S)$. Then
\begin{equation}\label{1coar}\int_{S}\psi(x)|\qq\varphi(x)|\,\per(x)=\int_{\R}ds \int_{\varphi^{-1}[s]\cap S}\psi(y)\,\nis(y)
\end{equation}
for every $\psi\in L^1(S,\per)$.\end{teo}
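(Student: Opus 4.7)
The plan is to reduce the identity to the classical Riemannian coarea formula applied to $\varphi$ on the smooth $(n-1)$-dimensional Riemannian submanifold $S\subset\GG$, after rewriting the densities $\per$ and $\nis$ in terms of the Riemannian measures $\sigma^{n-1}\rr$ and $\sigma^{n-2}\rr$. Throughout, I denote by $\nabla^S\varphi$ the tangential Riemannian gradient of $\varphi$ on $S$, and for each regular value $s$ of $\varphi$ I set $N_s:=\varphi^{-1}[s]\cap S$ and $\tau_s:=\nabla^S\varphi/|\nabla^S\varphi|$, which is the unit Riemannian normal of $N_s$ inside $S$. By Sard's theorem, the critical values form a set of $\mathcal{L}^1$-measure zero.

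The key computation is the following. By Definition~\ref{sh}, $\per\res S=|\PH\nu|\,\sigma^{n-1}\rr\res S$, while by Definition~\ref{dens}, choosing the Riemannian normal $2$-vector of $N_s$ inside $\GG$ as $\nu\wedge\tau_s$, one has $\nis\res N_s=|\PH\nu\wedge\PH\tau_s|\,\sigma^{n-2}\rr\res N_s$. Writing $\PH\nu=|\PH\nu|\,\nn$ and decomposing $\PH\tau_s=\P\ss\tau_s+\langle\tau_s,\nn\rangle\,\nn$ with respect to $\HH=\HS\oplus\mathrm{span}_\R\{\nn\}$, we obtain $\nn\wedge\PH\tau_s=\nn\wedge\P\ss\tau_s$, and since $\nn\perp\P\ss\tau_s$ it follows that $|\PH\nu\wedge\PH\tau_s|=|\PH\nu|\,|\P\ss\tau_s|$. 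Moreover, because $\qq\varphi=\P\ss(\nabla^S\varphi)$ by the very definition of the $\HS$-gradient, we have $|\P\ss\tau_s|=|\qq\varphi|/|\nabla^S\varphi|$ on $N_s\cap\{\nabla^S\varphi\neq 0\}$. Altogether this yields, for $\mathcal{L}^1$-a.e.~$s$,
\[
\nis\res N_s=|\PH\nu|\,\frac{|\qq\varphi|}{|\nabla^S\varphi|}\,\sigma^{n-2}\rr\res N_s.
\]

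Applying the classical Riemannian coarea formula on the smooth $(n-1)$-manifold $S$ to the bounded measurable function $F:=\psi\,|\PH\nu|\,|\qq\varphi|/|\nabla^S\varphi|$, extended by zero on $\{\nabla^S\varphi=0\}$ (which is harmless, since $\qq\varphi$ vanishes there as well and the critical values have $\mathcal{L}^1$-null image), yields
\[
\int_S F\,|\nabla^S\varphi|\,\sigma^{n-1}\rr=\int_\R ds\int_{N_s}F\,\sigma^{n-2}\rr.
\]
Substituting the densities computed above, the left-hand side becomes $\int_S\psi\,|\qq\varphi|\,\per$ and the right-hand side becomes $\int_\R ds\int_{N_s}\psi\,\nis$, which is exactly \eqref{1coar}. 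Once the identity is established for $\psi\in\cont^0(S)$, the extension to arbitrary $\psi\in L^1(S;\per)$ follows by the standard density argument.

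The only genuinely delicate point is the behaviour at the characteristic set $C_S$ and at the characteristic sets $C_{N_s}$, where $\HS$ changes rank and $\nn$ is undefined. However, at these points the factor $|\PH\nu|$ (respectively $|\PH\nu\wedge\PH\tau_s|$) vanishes, so $\per$ and $\nis$ do not see them; combined with the intrinsic measure-theoretic negligibility recorded in Theorem~\ref{baloghteo}, no contribution is missed. Verifying this compatibility with the characteristic locus is the main technical obstacle, but the density interpretation of the intrinsic measures makes it essentially automatic.
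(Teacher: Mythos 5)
Your proposal is correct and follows essentially the same route as the paper: both reduce to the classical Riemannian coarea formula on $S$ with the test function $\psi\,|\PH\nu|\,|\qq\varphi|/|\grad\ts\varphi|$ (your $F$ is exactly the paper's $\phi$), and both identify $\nis$ on the level sets via the unit Riemannian normal $\eta=\grad\ts\varphi/|\grad\ts\varphi|$ and the relation $|\P\ss\eta|=|\qq\varphi|/|\grad\ts\varphi|$. Your explicit verification that $|\PH\nu\wedge\PH\tau_s|=|\PH\nu|\,|\P\ss\tau_s|$ merely spells out the identity $\nis=|\PH\nu||\P\ss\eta|\,\sigma^{n-2}\rr$ that the paper imports from Remark \ref{measonfr}.
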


\begin{proof}
This result can be  deduced by the Riemannian Coarea
Formula. Indeed, we have
\begin{equation*}\int_{S}\phi(x)|
\grad\ts\varphi(x)|\,\sigma^{n-1}\rr(x)=\int_{\R}ds
\int_{\varphi^{-1}[s]\cap
S}\phi(y)\,\sigma^{n-2}\rr(y)\end{equation*} for every $\phi\in
L^1(S,\sigma^{n-1}\rr)$; see
 \cite{BuZa}, \cite{FE}.  Choosing
 $\phi=\psi\frac{|\grad\ss\varphi|}{|\grad\ts\varphi|}|\P\cc\nu|,$ for some $\psi\in
 L^1(S,\per)$, yields
\begin{eqnarray*}\int_{S}\phi|
\grad\ts\varphi|\,\sigma^{n-1}\rr=\int_{S}\psi\frac{|\grad\ss\varphi|}{|\grad\ts\varphi|}
|\grad\ts\varphi|\underbrace{|\P\cc\nu|\,\sigma^{n-1}\rr}_{=\per}=\int_{S}\psi|\qq\varphi|\,\per.
\end{eqnarray*}Since
$\eta=\frac{\grad\ts\varphi}{|\grad\ts\varphi|}$ along
$\varphi^{-1}[s]$, it follows that
$|\P\ss\eta|=\frac{|\grad\ss\varphi|}{|\grad\ts\varphi|}$. Therefore
\begin{eqnarray*}\int_{\R}ds
\int_{\varphi^{-1}[s]\cap S}\phi(y)\,\sigma^{n-2}\rr&=&\int_{\R}ds
\int_{\varphi^{-1}[s]\cap
S}\psi\frac{|\grad\ss\varphi|}{|\grad\ts\varphi|}|\P\cc\nu|\,\sigma^{n-2}\rr\\&=&\int_{\R}ds\int_{\varphi^{-1}[s]\cap
S}\psi\underbrace{|\P\ss\eta||\P\cc\nu|\,\sigma^{n-2}\rr}_{=\nis}\\&=&\int_{\R}ds\int_{\varphi^{-1}[s]\cap
S}\psi\,\nis.\end{eqnarray*}
\end{proof}

\subsection{First variation of $\per$}\label{prvar0}

 Below we shall discuss a general integral formula, the so-called first variation formula for the $\HH$-perimeter, which is the key-tool of this paper.

\begin{oss}[The measure $\nis$ along $\partial S$]\label{measonfr}
Let $S\subset\GG$ be a hypersurface of class $\cont^2$   with
(piecewise) $\cont^1$ boundary  $\partial{S}$. Let 
 $\eta\in\XX^1(\TS)$ be the outward-pointing unit normal vector along $\partial S$ and denote by $\sigma^{n-2}\rr$
the  Riemannian measure on $\partial{S}$, given by  
$\sigma^{n-2}\rr=(\eta\LL\sigma^{n-1}\rr)|_{\partial{S}}$.
We  recall that $(X\LL\per)|_{\partial{S}}=\langle X, \eta\rangle
  |\PH\nu|\, \sigma^{n-2}\rr $ for every  $X\in\XX^1(\TS)$. The
characteristic set $C_{\partial{S}}$ of ${\partial{S}}$ turns out to be given
by $C_{\partial{S}}=\{p\in{\partial{S}}: |\PH\nu||\P\ss\eta|=0\}$.
Furthermore, by applying Definition \ref{dens}, one has
 $${\nis} =
 \left(\frac{\P\ss\eta}{|\P\ss\eta|}\LL\per\right)\bigg|_{\partial{S}},$$
or, equivalently ${\nis} =
|\PH\nu||\P\ss\eta|\,\sigma^{n-2}\rr $. The {\rm unit  horizontal normal} along $\partial{S}$ is given
by
$\eta\ss:=\frac{\P\ss\eta}{|\P\ss\eta|}$. Note that $(X\LL\per)|_{\partial{S}}=\langle X, \eta\ss\rangle\,
  {\nis} $ for every  $X\in\XX^1(\HS)$.
 \end{oss}
\begin{Defi} \label{leibniz}Let $S\subset\GG$ be a hypersurface of class $\cont^2$. Let $\imath:S\rightarrow\GG$ be the inclusion of $S$ in $\GG$
and let
 $\vartheta: ]-\epsilon,\epsilon[\times S
\rightarrow \GG$ be a ${\cont}^2$-smooth map. We say that $\vartheta$ is a  {\rm
 variation} of $\imath$ if, and only if:
\begin{itemize}
\item[{\rm(i)}] every
$\vartheta_t:=\vartheta(t,\cdot):S\rightarrow\GG$ is an
immersion;\item[{\rm(ii)}] $\vartheta_0=\imath$.
\end{itemize}
The {\rm variation vector} of $\vartheta$  is defined by
$X:=\frac{\partial \vartheta}{\partial
t}\big|_{t=0}$ and we also set $\WW=\frac{\partial \vartheta}{\partial
t}$.
\end{Defi}

\begin{no}Let $S\subset\GG$ be a  hypersurface of class $\cont^2$. Let $X\in\XG$ and let $\nu$ be the  outward-pointing unit normal vector along $S$. Hereafter, we shall denote by $X\op$ and $X\ot$ the standard decomposition of $X$ into its normal and tangential components, that is,  $X\op=\langle X,\nu\rangle\nu$ and $X\ot=X-X\op$. 
\end{no}

By definition, the   1st  variation formula of $\per$ along $S$ is given by
\begin{equation}\label{nome}I_S(\per):=\frac{d}{dt}\left(\int_{S}\vartheta_t^\ast\pert\right) \Bigg|_{t=0},\end{equation}
where $\vartheta_t^\ast$ denotes the pull-back by $\vartheta_t$ and $\pert$ denotes the $\HH$-perimeter along $S_t:=\vartheta_t(S)$.

A natural question arises: is it possible to bring the -time- derivatives inside the integral sign?
Clearly,  if we assume that $\overline{S}$ is non-characteristic, then the answer is affirmative.
In the general case, we can argue as follows.  We first note that
$$ \int_{S}\vartheta_t^\ast\pert=\int_{S}|\P\ct\nu^t|\,\mathcal{J}ac\,\vartheta_t\,\sigma\rr^{n-1},$$where $\mathcal{J}ac\,\vartheta_t $ denotes the usual Jacobian of the map $\vartheta_t$; see \cite{Simon}, Ch. 2, $\S$ 8, pp. 46-48. Indeed, by  definition, we have   $\pert=|\P\ct\nu^t|(\sigma\rr^{n-1})_t$ and hence the previous formula follows from the well-known  Area formula of Federer; see \cite{FE} or \cite{Simon}. Let us set $ f:]-\epsilon, \epsilon[\times S\longrightarrow\R$, \begin{equation}\label{faz} 
f(t, x):=|\P\ct\nu^t(x)|\,\mathcal{J}ac\,\vartheta_t(x).
\end{equation}In this case, we also set $C_{S}:=\left\lbrace x\in S: |\P\ct\nu^t(x)|=0 \right\rbrace$.  With this notation,  our original question can be solved by applying  to  $f$ the  Theorem of Differentiation under the integral; see, for instance, \cite{Jost}, Corollary 1.2.2, p.124. More precisely, let us compute
\begin{eqnarray}\label{ujh}\frac{d f}{dt}&=&\frac{d\,|\P\ct\nu^t|}{dt}\,\mathcal{J}ac\,\vartheta_t + |\P\ct\nu^t|\frac{d\,\mathcal{J}ac\,\vartheta_t}{dt}\\\nonumber &=&\left\langle\WW,\grad\,|\P\ct\nu^t|\right\rangle\,\mathcal{J}ac\,\vartheta_t + |\P\ct\nu^t|\frac{d\,\mathcal{J}ac\,\vartheta_t }{dt}\\\nonumber &=&\left( \left\langle\WW\op,\grad\,|\P\ct\nu^t|\right\rangle+\left\langle\WW\ot,\grad\,|\P\ct\nu^t|\right\rangle + |\P\ct\nu^t|\div\tst\WW\right) \mathcal{J}ac\,\vartheta_t\\\nonumber &=&\left( \left\langle\WW\op,\grad\,|\P\ct\nu^t|\right\rangle+   \div\tst\left(\WW|\P\ct\nu^t|\right)\right) \mathcal{J}ac\,\vartheta_t,
\end{eqnarray}where we have used the very definition of tangential divergence and the well-known calculation of $\frac{d\,\mathcal{J}ac\,\vartheta_t}{dt}$, which can be found in Chavel's book \cite{Ch2}; see Ch.2, p.34. Now since  $|\P\ct\nu^t|$ is a Lipschitz continuous function, it follows that $\frac{d f}{dt}$ is bounded on $S\setminus C_{S}$ and so lies to $L^1_{loc}(S; \sigma\rr^{n-1})$. This shows that: \it we can pass the time-derivative  through the integral sign. \rm

At this point the 1st variation formula  follows from the calculation of the Lie derivative of $\per$ with respect to the initial velocity $X$ of the flow $\vartheta_t$.

\begin{oss}Let $M$ be  a smooth manifold, let $\omega\in\Om^k(M)$ be a differential  $k$-form on $M$ and let $X\in\XX(\TT M)$ be a differentiable vector field on $M$, with associated flow $\phi_t:M\longrightarrow M$. We recall that the Lie derivative of $\omega$  with respect to $X$, is defined by $\Lie_X\omega:=\frac{d}{dt}\phi_t^\ast\omega\big|_{t=0},$ where $\phi_t^\ast\omega$ denotes the pull-back of $\omega$ by $\phi_t$. In other words, the Lie derivative of $\omega$ under the flow generated by $X$ can be seen as the \textquotedblleft infinitesimal 1st variation\textquotedblright  of  $\omega$  with respect to $X$. Then, Cartan's identity says that \[\Lie_X\omega= (X\LL d\omega) +d(X\LL\omega).\]This formula is a very useful tool in proving variational formulas. For the case of  Riemannian volume forms, we refer the reader to Spivak's book \cite{Spiv}; see Ch. 9, pp. 411-426 and 513-535. 
\end{oss}

The Lie derivative of the differential $(n-1)$-form $\per$ with respect to $X$ can be calculated elementarily as follows.
 We have$$ X\LL d \per=X\LL d(\nn\LL\Vol)= X\LL\left(\div\,\nn \Vol\right)=\langle X,\nu\rangle\,\div\,\nn\,\sigma\rr^{n-1}.$$Note that $\div\,\nn =\div\cc\nn=-\MS$. In fact, one has $$\div\,\nn =\sum_{i=1}^n\langle\nabla_{X_i}\nn,
{X_i}\rangle=\sum_{i=1}^\DH X_i({\nn}_i)=\div\cc\nn=-\MS.$$

Now the second term in Cartan's identity can be computed using the following:\begin{lemma}\label{fondam}If $X\in\XX^1(\TG)$, then   $(X\LL\per)|_S=\left( \left(X\ot|\PH\nu|-\langle X,\nu\rangle\nn\ot\right)\LL\sigma\rr^{n-1}\right)\big|_S$ and at each non-characteristic point of $S$, we have $$d(X\LL\per)|_S=\div\ts \left( X\ot|\PH\nu|-\langle X,\nu\rangle\nn\ot \right) \,\sigma\rr^{n-1}.$$
\end{lemma}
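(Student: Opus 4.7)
The plan is to compute $(X\LL\per)|_S$ directly, by viewing $\per=\nn\LL\sigma\rr^n$ as a differential $(n-1)$-form on a neighborhood of $S$ in $\GG$ (extending $\nn$ smoothly off $S$), and then to reorganize the outcome as a contraction against $\sigma\rr^{n-1}|_S$ of a single vector field tangent to $S$. First I record the orthogonal decompositions $\nn=|\PH\nu|\,\nu+\nn\ot$ (which follows from $\langle\nn,\nu\rangle=|\PH\nu|$) and $X=\langle X,\nu\rangle\,\nu+X\ot$. Working pointwise at a non-characteristic point $x\in S$, I fix an orthonormal basis $\{e_1,\dots,e_{n-1},\nu\}$ of $\TT_x\GG$ with $e_i\in\TT_xS$, so that $\sigma\rr^n=\nu^\ast\wedge e_1^\ast\wedge\cdots\wedge e_{n-1}^\ast$ and $\sigma\rr^{n-1}|_S=e_1^\ast\wedge\cdots\wedge e_{n-1}^\ast$. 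The crucial principle is that the pullback to $S$ of any form carrying a $\nu^\ast$ factor vanishes.

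Next I expand $X\LL(\nn\LL\sigma\rr^n)$ into the four pieces of the bilinear decomposition, exploiting the anticommutation identity $A\LL B\LL\omega=-B\LL A\LL\omega$ and the relation $\nu\LL\sigma\rr^n=\sigma\rr^{n-1}|_S$. The $(\langle X,\nu\rangle\nu,\,|\PH\nu|\nu)$ piece vanishes since $\nu\LL\nu\LL\omega=0$. The $(X\ot,\,|\PH\nu|\nu)$ piece gives $|\PH\nu|\,X\ot\LL\sigma\rr^{n-1}|_S$. A direct basis computation shows that $\nn\ot\LL\sigma\rr^n$ carries $\nu^\ast$ as an outer factor, and this factor is preserved by contraction against the tangent vector $X\ot$; consequently the $(X\ot,\,\nn\ot)$ piece dies on pullback to $S$. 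The remaining cross-term is $\langle X,\nu\rangle\,\nu\LL(\nn\ot\LL\sigma\rr^n)=-\langle X,\nu\rangle\,\nn\ot\LL(\nu\LL\sigma\rr^n)$, which on pullback equals $-\langle X,\nu\rangle\,\nn\ot\LL\sigma\rr^{n-1}|_S$. Summing these contributions yields the first identity.

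For the exterior-derivative identity, set $Y:=|\PH\nu|\,X\ot-\langle X,\nu\rangle\,\nn\ot$. Since $S$ is $\cont^2$ and $|\PH\nu|>0$ on $S\setminus C_S$, the unit horizontal normal $\nn$ (and hence $\nn\ot$) is $\cont^1$ at non-characteristic points, so $Y$ is a $\cont^1$ tangent vector field on $S\setminus C_S$. The first identity then reads $(X\LL\per)|_S=Y\LL\sigma\rr^{n-1}|_S$, i.e.\ $Y$ contracted against the Riemannian volume form of $S$. The classical Riemannian identity $d(Y\LL\mathrm{vol}_S)=(\div\ts Y)\,\mathrm{vol}_S$, valid for any $\cont^1$ tangent field on a Riemannian submanifold (being equivalent, via Cartan's formula, to $\Lie_Y\mathrm{vol}_S=(\div\ts Y)\,\mathrm{vol}_S$), then delivers $d((X\LL\per)|_S)=\div\ts Y\,\sigma\rr^{n-1}|_S$, which is precisely the claim.

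The main technical obstacle is the sign and index bookkeeping in the second paragraph: verifying which two of the four pieces survive the pullback, and that the two surviving contributions combine with exactly the signs produced by the antiderivation rule (the relative minus sign between the $|\PH\nu|X\ot$ and $\langle X,\nu\rangle\nn\ot$ terms is the heart of the lemma and is what was missed in \cite{Monteb}). Everything else is routine from the definitions, the identification $\per|_S=|\PH\nu|\sigma\rr^{n-1}|_S$, and the classical Riemannian divergence identity for tangential vector fields.
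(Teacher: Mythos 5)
Your proof is correct and follows essentially the same route as the paper's: decompose both $X$ and $\nn$ into tangential and normal parts, observe that the $(\nu,\nu)$ piece vanishes identically and the $(X\ot,\nn\ot)$ piece dies on pullback to $S$ (being $\sigma\rr^n$ evaluated on $n$ vectors tangent to the $(n-1)$-dimensional $S$), keep the two cross terms with the crucial relative minus sign, and then invoke the standard identity $d(Y\LL\sigma\rr^{n-1})=(\div\ts Y)\,\sigma\rr^{n-1}$ for the $\cont^1$ tangential field $Y=|\PH\nu|X\ot-\langle X,\nu\rangle\nn\ot$. No gaps.
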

\begin{proof}We have
\begin{eqnarray*}d(X\LL\per)|_S &=& (X\LL\nn\LL\Vol)|_S\\&=&d\left( \left( X\ot + X\op\right)\LL \left( \nn\ot + \nn\op\right)\LL\Vol\right)\big|_S\\&=&d\left( X\ot \LL  \nn\op \LL\Vol\right)\big|_S+
d\left( \nn\ot\LL X\op \LL\Vol\right)\big|_S\\&=&d \left(X\ot\LL\per\right)\big|_S+d \left(\nn\ot\LL\langle X,\nu\rangle\sigma\rr^{n-1}\right)\big|_S\\&=&\div\ts \left( X\ot|\PH\nu|-\langle X,\nu\rangle\nn\ot \right) \,\sigma\rr^{n-1}.\end{eqnarray*}
\end{proof}
 \begin{oss}\label{mistake} The previous calculation corrects a mistake in \cite{Monteb}, where the normal component of the vector field $X$
 was omitted and this caused the loss of some divergence-type terms in some of the variational formulas proved there. 
\end{oss}

 Thus, we can  conclude that
\begin{equation}\label{9}
 \Lie_X\per=\left(-\MS\langle X,\nu\rangle +\div\ts \left( X\ot|\PH\nu|-\langle X,\nu\rangle\nn\ot \right)\right)\,\sigma\rr^{n-1},
\end{equation}at each non-characteristic point of $S$.
Furthermore, if $\MS\in L^1(S; \sigma\rr^{n-1})$,  we can integrate this formula over all of $S$.
Indeed, in this case, all terms in the formula above turn out to be in $L^1(S; \sigma\rr^{n-1})$; see also \cite{MonteStab}.

\begin{teo}[1st variation of $\per$] \label{1vg}Let $S\subset\GG$ be a compact
hypersurface  of class ${\cont}^2$  with -or without- boundary $\partial S$ and let
 $\vartheta: ]-\epsilon,\epsilon[\times S
\rightarrow \GG$ be a ${\cont}^2$-smooth variation of $S$. Let $X=\frac{d\,\vartheta_t}{dt}\big|_{t=0}$ be the variation vector field and denote by $X\op$ and $X\ot$  the normal and  tangential components of $X$ along $S$, respectively. If $\MS\in L^1(S; \sigma\rr^{n-1})$, then \begin{eqnarray}\label{fva}I_S(X,\per)=
\int_{S}\left(-\MS\langle X\op,\nu\rangle +\div\ts\left( X\ot|\PH\nu|-\langle X\op,\nu\rangle\nn\ot \right)\right)\,\sigma\rr^{n-1}\\\label{fva2formula}=\int_{S}-\MS\frac{\langle X\op,\nu\rangle}{|\PH\nu|}\,\per+ \int_{\partial S}\left\langle \left( X\ot-\frac{\langle X\op,\nu\rangle}{|\PH\nu|}\nn\ot \right),\frac{\eta }{|\P\ss\eta|}\right\rangle \,\underbrace{|\PH\nu||\P\ss\eta|\,\sigma\rr^{n-2}}_{=\nis}.
\end{eqnarray}
\end{teo}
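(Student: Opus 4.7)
The plan is to obtain formula \eqref{fva} by applying Cartan's magic formula to $\per$ pointwise on $S\setminus C_S$ and integrating, and then to derive \eqref{fva2formula} by rewriting the bulk integrand in terms of $\per$ and applying the Riemannian divergence theorem on $S$ to the tangential divergence term. The integrability hypothesis $\MS\in L^1(S;\sigma\rr^{n-1})$ will be used both to justify differentiation under the integral sign and to control the behaviour near the characteristic set $C_S$.

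First, following the argument indicated in \eqref{faz}--\eqref{ujh}, I would bring the $t$-derivative through the integral sign in the definition \eqref{nome}. Writing $\int_S \vartheta_t^\ast\pert = \int_S f(t,x)\,\sigma\rr^{n-1}$ with $f(t,x)=|\P\ct\nu^t(x)|\,\mathcal{J}ac\,\vartheta_t(x)$, the computation of $\partial_t f$ shows that this derivative is bounded on $S\setminus C_S$; under the hypothesis $\MS\in L^1(S;\sigma\rr^{n-1})$ the resulting integrand belongs to $L^1(S;\sigma\rr^{n-1})$. Hence $I_S(X,\per)$ equals $\int_S \Lie_X\per$ computed on $S\setminus C_S$.

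Next, I would apply Cartan's identity $\Lie_X\per = X\LL d\per + d(X\LL\per)$. For the first summand, since $\per = \nn\LL\sigma^n\rr$ and $d\sigma^n\rr=0$, one has $d\per = (\div\,\nn)\,\sigma^n\rr = -\MS\,\sigma^n\rr$ (using the reduction $\div\,\nn = \div\cc\,\nn$ noted in the excerpt), so that $(X\LL d\per)|_S = -\MS\,\langle X,\nu\rangle\,\sigma\rr^{n-1}|_S$. For the second summand I would invoke Lemma \ref{fondam} to obtain $d(X\LL\per)|_S = \div\ts\bigl(X\ot|\PH\nu|-\langle X,\nu\rangle\nn\ot\bigr)\,\sigma\rr^{n-1}|_S$. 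Combining these two contributions and using $\langle X,\nu\rangle = \langle X\op,\nu\rangle$, integration over $S$ yields \eqref{fva}. To pass from \eqref{fva} to \eqref{fva2formula}, I would rewrite $-\MS\langle X\op,\nu\rangle\sigma\rr^{n-1}= -\MS\frac{\langle X\op,\nu\rangle}{|\PH\nu|}\,\per$ and apply the Riemannian divergence theorem on $S$ to $V:=X\ot|\PH\nu|-\langle X\op,\nu\rangle\nn\ot$, obtaining $\int_S \div\ts V\,\sigma\rr^{n-1}=\int_{\partial S}\langle V,\eta\rangle\,\sigma\rr^{n-2}$; inserting $\sigma\rr^{n-2}\res\partial S = \nis/(|\PH\nu||\P\ss\eta|)$ from Remark \ref{measonfr} and factoring $|\PH\nu|$ out of $V$ gives exactly the boundary integrand in \eqref{fva2formula}.

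The main obstacle is the characteristic set: Lemma \ref{fondam} and formula \eqref{9} hold only on $S\setminus C_S$, and the divergence density $\div\ts(X\ot|\PH\nu|-\langle X\op,\nu\rangle\nn\ot)$ is \emph{a priori} singular at points where $|\PH\nu|\searrow 0$, because $\nn\ot$ is undefined on $C_S$. The hypothesis $\MS\in L^1(S;\sigma\rr^{n-1})$ takes care of the bulk term, but the divergence theorem on $S$ must be applied via an exhaustion $\UU_\varepsilon\subset S\setminus C_S$ with $\UU_\varepsilon\nearrow S\setminus C_S$, using Theorem \ref{baloghteo} (which bounds the Euclidean-Hausdorff dimension of $C_S$ by $n-2$) together with the fact that $\per$ and $\nis$ do not charge $C_S$ and $C_{\partial S}$, respectively, in order to justify the passage to the limit on both the interior divergence term and the boundary integral.
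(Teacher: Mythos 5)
Your derivation of \eqref{fva} coincides with the paper's: differentiation under the integral sign justified by the boundedness of $\partial f/\partial t$ in \eqref{ujh}, Cartan's identity, the computation $(X\LL d\per)|_S=-\MS\langle X,\nu\rangle\,\sigma\rr^{n-1}$, and Lemma \ref{fondam} for the exact term. The divergence from the paper is in the passage from \eqref{fva} to \eqref{fva2formula}. The paper does not exhaust $S\setminus C_S$: it applies the generalized Stokes formula of Proposition \ref{ST} (an Anzellotti/Chen--Torres--Ziemer type Gauss--Green theorem), valid for a compactly supported form $\alpha$ that is merely $L^\infty$ with $d\alpha\in L^1$ and bounded trace on the boundary. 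Here $\alpha=\left(X\ot|\PH\nu|-\langle X,\nu\rangle\nn\ot\right)\LL\sigma\rr^{n-1}$ is bounded though discontinuous on $C_S$, and $d\alpha\in L^1(S;\sigma\rr^{n-1})$ because it equals $\Lie_X\per+\MS\langle X,\nu\rangle\,\sigma\rr^{n-1}$, a difference of two integrable densities.

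Your exhaustion argument has a genuine gap at exactly the point this proposition is designed to handle. When you apply the classical divergence theorem on $\UU_\varepsilon\subset S\setminus C_S$, the boundary $\partial\UU_\varepsilon$ contains, besides a portion of $\partial S$, an inner boundary enclosing $C_S$, and you give no reason why its contribution vanishes as $\varepsilon\to 0$. The field $V=X\ot|\PH\nu|-\langle X,\nu\rangle\nn\ot$ does not tend to zero on approach to $C_S$: only the first summand does, while $\nn\ot=\nn-|\PH\nu|\nu$ retains unit-length horizontal part. Hence the inner contribution is of order $\sigma\rr^{n-2}\left(\partial\UU_\varepsilon\cap{\rm Int}\,S\right)$, and the facts you cite --- $\dim_{\rm Eu-Hau}(C_S)\leq n-2$ from Theorem \ref{baloghteo}, and that $\per$ and $\nis$ do not charge $C_S$ and $C_{\partial S}$ --- control neither this quantity nor its limit: a set of Hausdorff dimension $n-2$ may have positive, even infinite, Euclidean $(n-2)$-dimensional Hausdorff measure, in which case the $(n-2)$-measures of the boundaries of its shrinking neighbourhoods stay bounded away from zero. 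To close the argument you must either invoke a measure-theoretic Gauss--Green theorem as in Proposition \ref{ST}, or exhibit a specific exhaustion along which the inner boundary measure is proved to vanish; the hypotheses you invoke do not supply one.
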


Note that the second equality follows by applying the following generalized Stokes' formula to the differential $(n-2)$-form $\alpha:=(X\LL \per)|_S\in\Om^{n-2}(S)$.
\begin{Prop}\label{ST} Let $M$ be an oriented $k$-dimensional manifold of class $\cont^2$ with boundary
$\partial M$. Then $\int_M d\alpha=\int_{\partial M}\alpha$ for every compactly supported $(k-1)$-form $\alpha$ such that  $\alpha\in L^\infty(M)$, $d\alpha\in L^1(M)$ -or  $d\alpha\in L^\infty(M)$- and $\imath_M^\ast\alpha\in L^\infty(\partial M)$, where $\imath_M:\partial M\longrightarrow\overline{M}$ is the natural inclusion. 
\end{Prop}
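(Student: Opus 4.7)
The plan is to prove the formula by a standard regularization and partition of unity argument, reducing to the classical Stokes theorem for smooth forms. First, since $\alpha$ is compactly supported, cover $\mathrm{supp}\,\alpha$ by finitely many coordinate charts (some interior, some adapted to $\partial M$, i.e.\ diffeomorphic to open sets in the closed half-space $\overline{\R^k_+}=\{x_k\geq 0\}$). Choose a $\cont^2$-smooth partition of unity $\{\chi_j\}$ subordinate to this cover and decompose $\alpha=\sum_j\chi_j\alpha$. Since $d(\chi_j\alpha)=d\chi_j\wedge\alpha+\chi_j\,d\alpha$ and both $\chi_j$ and $d\chi_j$ are bounded, each piece $\chi_j\alpha$ inherits the three regularity hypotheses ($L^\infty$-bound, $L^1$ (or $L^\infty$)-bound on the exterior derivative, and $L^\infty$-bound of the pullback to $\partial M$). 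By linearity of the two integrals, it therefore suffices to establish the formula chart-by-chart.

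Next, in a fixed chart one is reduced to a compactly supported $(k-1)$-form $\alpha=\sum_{I}\alpha_I\,dx^I$ on $\overline{\R^k_+}$ (or $\R^k$ in the interior case) with $\alpha_I\in L^\infty$. Extend each $\alpha_I$ by zero outside the half-space and mollify: set $\alpha_\epsilon:=\sum_I(\alpha_I*\rho_\epsilon)\,dx^I$ for a standard symmetric mollifier $\rho_\epsilon$. The forms $\alpha_\epsilon$ are smooth in a neighborhood of $\overline{\R^k_+}$, so the classical Stokes formula gives
\[
\int_{\R^k_+}d\alpha_\epsilon=\int_{\partial\R^k_+}\imath_M^\ast\alpha_\epsilon
\]
for every $\epsilon>0$. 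Because $d$ commutes with convolution, $d\alpha_\epsilon=(d\alpha)*\rho_\epsilon$, and standard properties of mollification give $d\alpha_\epsilon\to d\alpha$ in $L^1$ (if $d\alpha\in L^1$) or at least in $L^1_{loc}$ and weakly-$\ast$ in $L^\infty$ (if $d\alpha\in L^\infty$); either way, the left-hand side converges to $\int_M d\alpha$ as $\epsilon\to 0^+$.

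The main obstacle is the convergence of the right-hand side, since traces on $\partial M$ are not continuous under $L^\infty$ perturbations of the interior values. To overcome this, one exploits the hypothesis that $\imath_M^\ast\alpha\in L^\infty(\partial M)$ is given independently as a bounded form on the boundary. Writing $\alpha=\sum_I\alpha_I\,dx^I$ in coordinates adapted so that $x_k$ is the inward normal, the pullback $\imath_M^\ast\alpha$ involves only those components $\alpha_I$ with $k\notin I$, evaluated at $x_k=0$; for these components the hypothesis on the trace together with the bound on $d\alpha$ allows one to use a one-sided (upward-shifted) mollifier and a Fubini-type argument to conclude that the traces $\imath_M^\ast\alpha_\epsilon$ converge in $L^1(\partial M)$ to $\imath_M^\ast\alpha$. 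Combined with the uniform $L^\infty$ bound and dominated convergence on the compact support, this yields
\[
\int_{\partial\R^k_+}\imath_M^\ast\alpha_\epsilon\;\longrightarrow\;\int_{\partial M}\imath_M^\ast\alpha,
\]
and taking $\epsilon\to 0^+$ in the Stokes identity for $\alpha_\epsilon$ gives the result. The delicate point is precisely the identification of the boundary limit; once the hypothesis $\imath_M^\ast\alpha\in L^\infty(\partial M)$ is used to justify this convergence, the remainder of the proof is routine.
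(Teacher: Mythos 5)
Your overall architecture (partition of unity, reduction to a half-space chart, mollification, passage to the limit in the classical Stokes identity) is the ``standard procedure'' that the paper itself invokes; note, however, that the paper does not prove Proposition \ref{ST} directly but deduces it from Anzellotti's divergence theorem for bounded vector fields whose divergence is a measure (Theorem 1.9 of \cite{Anze}), precisely because the hard step is the boundary trace. Measured against that, your proposal contains one repairable inconsistency and one genuine gap.

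The inconsistency: if you extend $\alpha$ by zero across $\{x_k=0\}$ and convolve with a \emph{symmetric} mollifier, then $d\alpha_\epsilon\neq(d\alpha)\ast\rho_\epsilon$ on $\R^k_+$: the distributional exterior derivative of the zero-extension carries a jump term supported on $\{x_k=0\}$, and its mollification contributes a nonvanishing amount to $\int_{\R^k_+}d\alpha_\epsilon$ (indeed that contribution is exactly where the boundary integral comes from). You must use the inward-shifted mollifier from the outset, or track the jump term explicitly. The genuine gap is the convergence $\imath_M^\ast\alpha_\epsilon\to\imath_M^\ast\alpha$ in $L^1(\partial M)$, which you assert via ``a one-sided mollifier and a Fubini-type argument'' but do not prove; this is the entire content of the proposition. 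For a general $\alpha\in L^\infty$ with distributional $d\alpha\in L^1$ there is no pointwise restriction to the null set $\partial M$; what exists is a weak (normal) trace, defined precisely as the limit of the shifted boundary averages, and the Gauss--Green identity holds with \emph{that} trace. The hypothesis $\imath_M^\ast\alpha\in L^\infty(\partial M)$ presupposes a pointwise restriction, and identifying it with the weak trace requires an additional argument --- in the paper's application it is supplied by the fact that the form is $\cont^1$ up to $\partial S$ outside the characteristic set, which is negligible for $\sigma\rr^{n-2}$ on $\partial S$. (That some such interpretation is forced is already visible for $k=1$, $M=[0,1]$ and $\alpha$ a step function: reading $d\alpha$ as the a.e.\ pointwise derivative makes the statement false.) Either prove the trace identification under an explicit regularity hypothesis, or do as the paper does and quote Anzellotti's theorem for exactly this step.
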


\begin{oss} 
The previous result can be deduced by applying a standard procedure\footnote{See, for instance, Federer's book \cite{FE}, paragraph 3.2.46, p. 280; see also \cite{Pfeffer}, Remark 5.3.2, p. 197.} from a divergence-type theorem proved by  Anzellotti; see, more precisely, Theorem 1.9 in \cite{Anze}. More recent and  more general results can  be found in the paper by Chen, Torres and Ziemer  \cite{variZ}. See also \cite{Taylor}, formula (G.38), Appendix G. 
\end{oss}

\subsection{Blow-up of the horizontal perimeter
$\per$ up to $C_S$}\label{blow-up}Let $S\subset\GG$ be a smooth
hypersurface. In this section we shall study the following density-limit:
\begin{equation}\label{limit} \lim_{r\rightarrow
0^+}\frac{\per (S\cap
B_{\varrho}(x,r))}{r^{Q-1}},\end{equation}where $B_{\varrho}(x,r)$
is the $\varrho$-ball of center $x\in{\rm Int}\,S$ and radius $r$.

 It is worth observing  that the point
$x$ is {\it  not necessarily non-characteristic}.  For
a similar analysis, we refer the reader to \cite{Mag, Mag3,
Mag4} and to \cite{Mag8}, for  the characteristic
case in the setting of $2$-step Carnot groups; see also
\cite{balogh, BTW}, \cite{FSSC3, FSSC5}.

We  stress that the second part of the following Theorem \ref{BUP} (which, to the best of our knowledge, is
new) will be used  only in Section \ref{perdindirindina} and Section \ref{asintper} in order to prove some monotonicity estimates for the $\HH$-perimeter of the intersection $S_t$ of a smooth hypersurface $S$ with a homogeneous $\varrho$-ball $B_\varrho(x, t)$ centered at an interior characteristic point $x\in S\cap C_S$.

\begin{teo}\label{BUP}Let $\GG$ be  a $k$-step Carnot group.

\begin{itemize}

\item [{\rm Case (i)}]\,\,Let $S$ be a 
hypersurface of class $\cont^1$ and let $x\in {\rm Int}(S\setminus C_S)$;
 then \begin{eqnarray}\label{BUP1}\per(S\cap B_\varrho(x,r))\sim
\kappa_\varrho(\nn(x))\, r^{Q-1}\qquad\mbox{for}\quad r\rightarrow
0^+,\end{eqnarray}where the density-function $\kappa_\varrho(\nn(x))$ is
called {\rm metric factor}. It turns out that
$$\kappa_\varrho(\nn(x))=\per\left(\mathcal{I}(\nn(x))\cap
B_\varrho(x,1)\right),$$where $\mathcal{I}(\nn(x))$
 denotes the vertical hyperplane\footnote{\label{piedipag}Note that $\mathcal{I}(\nn(x))$
corresponds to an ideal of the Lie algebra $\gg$. We also remark
that the $\HH$-perimeter on a vertical hyperplane equals the
Euclidean-Hausdorff measure $\Ar$ on the hyperplane.} through $x$ and orthogonal
to $\nn(x)$.

\item [{\rm Case (ii)}]\,\, Let $x\in {\rm Int}(S\cap C_S)$ and
let $\alpha\in I\vv$ be such that, locally around $x$, $S$ can
be represented as an
$X_\alpha$-graph of class $\cont^i$, where $i={\rm ord}(\alpha)\in\{2,...,k\}$. In this case, we have
$$S\cap
B_\varrho(x,r)\subset\exp\left\{\left(\zeta_1,...,\zeta_{\alpha-1},
\psi(\zeta),\zeta_{\alpha+1},...,\zeta_n \right)\, : \,
\zeta:=(\zeta_1,...,\zeta_{\alpha-1},
0,\zeta_{\alpha+1},...,\zeta_n )\in \ee_\alpha^\perp\right\},$$
for some function $\psi:\ee_\alpha^{\perp}\cong\R^{n-1}\rightarrow\R$  of class $\cont^i$. Without loss of generality,
we may assume that $x=0\in\GG$ and  $\psi(0)=0$.  If
\begin{equation}\label{0dercond}\frac{\partial^{\scriptsize(l)}
\psi}{\partial\zeta_{j_1}...\partial\zeta_{j_l}}(0)=0\qquad\mbox{whenever}\quad{\rm
ord}(j_1)+...+{\rm ord}(j_l)< i,
\end{equation}then
\begin{eqnarray}\label{BUPcarcase}\per(S\cap B_\varrho(x,r))\sim
\kappa_\varrho(C_S(x))\, r^{Q-1}\qquad\mbox{as long as}\qquad r\rightarrow
0^+,\end{eqnarray}where the function $\kappa_\varrho(C_S(x))$ can
 be computed by integrating the measure $\per$ along a  polynomial
hypersurface, which is the graph of the Taylor's expansion up to
 $i={\rm ord}(\alpha)$ of $\psi$ at
$\zeta=0\in\ee_\alpha^\perp$. More precisely, one has$$\kappa_\varrho(C_S(x))=\per(S_\infty\cap
B_\varrho(x,1)),$$where
$S_\infty=
\{(\zeta_1,...,\zeta_{\alpha-1},\widetilde{\psi}
(\zeta),\zeta_{\alpha+1},...,\zeta_n)\, :\,
\zeta\in\ee_\alpha^\perp\}$ and\begin{eqnarray}\nonumber\widetilde{\psi}(\zeta)=\sum_{\stackrel{{j_1}}{\scriptsize{\rm
ord}(j_1)=i}}\frac{\partial\psi}{\partial\zeta_{j_1}}(0)\,\zeta_{j_1}+\ldots+
\sum_{\stackrel{{j_1,...,j_l}}{\scriptsize{{\rm ord}(j_1)+...+{\rm
ord}(j_l)=i}}}\frac{\partial^{\scriptsize(l)}\psi}{\partial\zeta_{j_1}...
\partial\zeta_{j_l}}(0)\,\zeta_{j_1}...\zeta_{j_l}.\end{eqnarray}
Finally, if \eqref{0dercond} does not hold, then $S_\infty=\emptyset$ and $\kappa_\varrho(C_S(x))=0$.
\end{itemize}
\end{teo}

\begin{oss}[Order of $x\in S$]\label{fraccicarla}The rescaled hypersurfaces $\delta_{\frac{1}{r}}S$
locally converge to a limit-set $S_\infty$, that is, 
$\delta_{\frac{1}{r}}S\longrightarrow S_\infty$ for $r\rightarrow0^+$, where the
convergence is understood with respect the Hausdorff convergence
of sets; see \cite{Mag3, Mag8}. At every $x\in {\rm Int}(S\setminus
C_S)$ the limit-set $S_\infty$ is the vertical
hyperplane  $\mathcal{I}(\nn(x))$. Otherwise, $S_\infty$ is a
polynomial hypersurface.
Assume that $S$ is smooth enough near its characteristic set $C_S$,
say of class $\cont^k$. Then, there  exists a minimum integer $i={\rm
ord}(\alpha)$ such that \eqref{0dercond} holds. The number
${\rm ord}(x)=Q-i$ is called the \rm order \it of the
characteristic point $x\in C_S$. \end{oss}

\begin{proof}[Proof of Theorem \ref{BUP}]
We preliminarily note that the limit \eqref{limit} can be
computed, without loss of generality, at $0\in\GG$,
just by left-translating $S$. We have
$${\per \left(S\cap B_{\varrho}(x,r)\right)}={\per \left(x^{-1}\bullet\left(S\cap
B_{\varrho}(x,r)\right)\right)}={\per  \left(\left(x^{-1}\bullet
S\right)\cap B_{\varrho}(0,r)\right)}$$for any $x\in{\rm Int}\,S$,
where the second equality  follows from the additivity of the
group law $\bullet$.
\begin{no}We shall set:\begin{itemize}
\item[\rm (i)] $S_r(x):=S\cap B_{\varrho}(x,r)$;\item[\rm (ii)]
 $\widetilde{S}:=x^{-1}\bullet S$;
\item[\rm (iii)] $\widetilde{S}_r:=x^{-1}\bullet
S_r(x)=\widetilde{S}\cap B_{\varrho}(0,r).$\end{itemize}\end{no}By
using the homogeneity of $\varrho$ and the invariance of $\per$
under positive Carnot dilations\footnote{This means that
$\delta_t^\ast\per=t^{Q-1}\per$, $t\in \R_+$; see Section
\ref{prelcar}.}, it follows that
$$\per(\widetilde{S}_r)=r^{Q-1}\per\left(\delta_\frac{1}{r}\widetilde{S}\cap B_\varrho(0,1)\right)$$for all $r\geq 0$.
Therefore $\frac{\per(
\widetilde{S}_r)}{r^{Q-1}}=\per\left(\delta_\frac{1}{r}\widetilde{S}\cap
B_\varrho(0,1)\right)$, and it remains to compute
\begin{equation}\label{dens1}
\lim_{r\rightarrow 0^+}\per\left(\delta_{\frac{1}{r}}\widetilde{S}\cap
B_\varrho(0, 1)\right).\end{equation}

We begin by studying the non-characteristic case; see also
\cite{Mag8, Mag4}.\\
\\\noindent{\rm Case (i).\,\,\rm Blow-up for non-characteristic points.}\,\,{\it Let $S\subset\GG$
be a hypersurface of class $\cont^1$ and let $x\in {\rm Int}\,S$ be
non-characteristic}. Locally around  $x$, the hypersurface $S$ is
oriented by unit $\HH$-normal $\nn(x)$,
that is,  $\nn(x)$ is transversal\footnote{We say that $X$ is transversal to $S$ at $x$, in symbols  $X\pitchfork \TT_xS$, if
$\langle X, \nu\rangle\neq 0$  at  $x$, where $\nu$ is a
unit normal vector along $S$.} to $S$ at $x$. Thus, at least locally
around $x$, we may think of $S$ as a
$\cont^1$-graph  with respect to the horizontal direction
$\nn(x)$. Moreover, we can find an orthonormal change of
coordinates on
$\Rn\cong\TT_0\GG$ such that
$$\ee_1=X_1(0)=(L_{x^{-1}})_\ast\nn(x).$$With no loss of generality, by the Implicit Function Theorem we can write
$\widetilde{S}_r=x^{-1}\bullet S_r(x)$, for some (small enough)
$r>0$, as the exponential image in $\GG$ of a
$\cont^1$-graph\footnote{Actually, since the argument is local,
$\psi$ can be defined just on a suitable neighborhood of $0\in
\ee_1^\perp\cong \R^{n-1}$.}. So let
$$\Psi=\{(\psi(\xi),\xi)\,:\, \xi\in\R^{n-1}\}\subset\gg,$$ where
$\psi:\ee_1^{\perp}\cong\R^{n-1}\longrightarrow\R$ is a
$\cont^1$-function satisfying:

\begin{itemize}\item[${\rm(i)}$]$\psi(0)=0$;
\item[${\rm(ii)}$]${\partial\psi}/{\partial\xi_j} (0)=0$ for every
$j=2,...,\DH\,(=\dim\HH)$,
\end{itemize}for $\xi\in\ee_1^\perp\cong\R^{n-1}$. Therefore  $\widetilde{S}_r=\exp\Psi\cap
B_\varrho(0,r),$ for all (small enough) $r>0$. This remark can be
used to compute the limit \eqref{dens1}. So let us fix a positive
$r_0$ satisfying the previous assumptions and let $0\leq r\leq
r_0$. Then
\begin{eqnarray}\label{dens2}\delta_\frac{1}{r}\widetilde{S}\cap
B_\varrho(0, 1)=\exp\left(\widehat{\delta}_\frac{1}{r}\Psi\right)\cap
B_\varrho(0,1),\end{eqnarray}where
$\{\widehat{\delta}_{t}\}_{t\geq 0}$ are the induced dilations on
$\gg$, that is,  $\delta_t=\exp\circ\widehat{\delta}_{t}$ for every
$t\geq 0$. Henceforth, we will consider the restriction of
$\widehat{\delta}_{t}$ to the hyperplane
$\ee_1^\perp\cong\R^{n-1}$. So, with a slight
abuse of notation, instead of
$(\widehat{\delta}_{t})\big|_{\ee_1^{\perp}}(\xi)$ we shall
write $\widehat{\delta}_{t}\xi$. Moreover, we shall assume
$\R^{n-1}=\R^{\DH-1}\oplus\R^{n-\DH}$. Note that the induced
dilations $\{\widehat{\delta}_{t}\}_{t\geq 0}$ make
$\ee_1^\perp\cong\R^{n-1}$  a {\it graded vector space}, whose
grading respects that of $\gg$. We have
$$\widehat{\delta}_\frac{1}{r}\Psi=\widehat{\delta}_\frac{1}{r}\left\{(\psi(\xi),\xi)\, :\, \xi\in\R^{n-1}\right\}
=\left\{\left(\frac{\psi(\xi)}{r},\widehat{\delta}_\frac{1}{r}\xi\right)\,
:\, \xi\in\R^{n-1}\right\}.$$By using the change of variables
$\zeta:=\widehat{\delta}_{{1}/{r}}\xi$, we get that
$$\widehat{\delta}_\frac{1}{r}\Psi=
\left\{\left(\frac{\psi\left(\widehat{\delta}_{r}\zeta\right)}{r},\zeta\right)\,:\,
\zeta\in\R^{n-1}\right\}.$$By hypothesis $\psi\in \cont^1(U_0)$,
where $U_0$ is a suitable open neighborhood of $0\in\R^{n-1}$.
Using a Taylor's expansion of $\psi$ at $0\in\R^{n-1}$ and the
assumptions
 (i) and (ii), yields
\begin{eqnarray*}\psi(\xi)=\psi(0) + \langle\grad_{\R^{n-1}}\psi(0), \xi \rangle_{\R^{n-1}} + {\rm o}(\|\xi\|_{\R^{n-1}})
=\langle\grad_{\R^{n-\DH}}\psi(0), \xi_{\R^{n-\DH}} \rangle_{\R^{n-\DH}}
+{\rm o}(\|\xi\|_{\R^{n-1}}),\end{eqnarray*}as long as $\xi\rightarrow
0\in \R^{n-1}$. Note that $\widehat{\delta}_{r}\zeta\longrightarrow
0\in\R^{n-1}$ for $r\rightarrow 0^+$. By the
previous change of variables, we get that
\[\psi\left(\widehat{\delta}_{r}\zeta\right)
= \left\langle\grad_{\R^{n-\DH}}\psi(0),
\widehat{\delta}_{r}\left(\zeta_{\R^{n-\DH}}\right)
\right\rangle_{\R^{n-\DH}} + {\rm o}\left(r \right)\]for
$r\rightarrow 0^+$. Since $\left\langle\grad_{\R^{n-\DH}}\psi(0),
\widehat{\delta}_{r}\left(\zeta_{\R^{n-\DH}}\right)
\right\rangle_{\R^{n-\DH}}={\rm o}(r)$ for $r\rightarrow 0^+$, we
easily get that  the limit-set (obtained by blowing-up
$\widetilde{S}$ at the non-characteristic point $0$) is given by
\begin{equation}\label{dens3}\Psi_\infty=\lim_{r\rightarrow
0^+}\widehat{\delta}_\frac{1}{r}\Psi=\exp(\ee_1^\perp)=\mathcal{I}(X_1(0)),\end{equation}
where $\mathcal{I}(X_1(0))$ denotes the vertical hyperplane
through the identity $0\in\GG$ and orthogonal to $X_1(0)$. We
have shown that \eqref{dens1} can be computed by means of
\eqref{dens2} and \eqref{dens3}. More precisely
\begin{equation*}\lim_{r\rightarrow 0^+}\per\left(\delta_\frac{1}{r}\widetilde{S}\cap
B_\varrho(0, 1)\right)=\per\left(\mathcal{I}(X_1(0))\cap
B_\varrho(0,1)\right)\end{equation*}By remembering the previous change of
variables, it follows that $S_\infty=\mathcal{I}(\nn(x))$ and that
$$\kappa_\varrho(\nn(x))=\lim_{r\rightarrow 0^+}\frac{\per (S\cap
B_{\varrho}(x, r))}{r^{Q-1}}=\per\left(\mathcal{I}(\nn(x))\cap
B_\varrho(x,1)\right),$$which was to be proven.\\

 \noindent{\rm Case (ii).\,\rm  Blow-up at the characteristic set.}
{\it We are now assuming that $S\subset\GG$ is a  
hypersurface of class $\cont^i$  for some $i\geq 2$ and that $x\in {\rm Int}(S\cap C_S)$}.
Near $x$ the hypersurface $S$ is  oriented by some vertical
vector. Hence, at least locally around $x$, we may think of $S$ as the exponential image of a $\cont^i$-graph
 with respect to
some vertical direction $X_\alpha$ transversal to $S$ at $x$. Note that $X_\alpha$ is a vertical
left-invariant vector field of the fixed left-invariant frame
$\underline{X}=\{X_1,...,X_n\}$  and $\alpha\in
I\vv=\{h+1,..., n\}$ denotes a ``vertical'' index; see Definition
\ref{1notazione}. {\it Furthermore,  we are assuming that}
$\mathrm{ord}(\alpha):= i$,
 for some $i=2,..,k.$ As in the non-characteristic case,
for the sake of simplicity,  we
left-translate $S$ in such a way that $x$
coincides with $0\in\GG$. To this end, it is sufficient to
replace $S$ by $\widetilde{S}=x^{-1}\bullet S$. At the level of
the Lie algebra $\gg$, let us consider the hyperplane
$\ee_\alpha^\perp$ through the origin $0\in\gg\cong\R^{n}$ and
orthogonal to $\ee_\alpha=X_{\alpha}(0)$. Note that
$\ee_\alpha^\perp$ is the natural ``parameter space'' of a $\ee_\alpha$-graph. By the classical Implicit Function
Theorem, we may write
$\widetilde{S}_r=x^{-1}\bullet S_r(x)$ as the exponential image in
$\GG$ of a $\cont^i$-graph. We  have
$$\Psi=\left\{\left(\xi_1,...,\xi_{\alpha-1}\underbrace{,
\psi(\xi),}_{\scriptsize{\alpha-th\,
place}}\xi_{\alpha+1},...,\xi_n \right)\, :\,
\xi:=(\xi_1,...,\xi_{\alpha-1}, 0,\xi_{\alpha+1},...,\xi_n )\in
\ee_\alpha^\perp\cong \R^{n-1}\right\}$$ where
$\psi:\ee_\alpha^{\perp}\cong\R^{n-1}\longrightarrow\R$ is a
$\cont^i$-smooth function satisfying:

\begin{itemize}\item[${\rm(j)}$]$\psi(0)=0$;
\item[${\rm(jj)}$]${\partial\psi}/{\partial\xi_j} (0)=0$ for every
$j=1,...,\DH\,(=\dim\HH)$.
\end{itemize}Thus we get that $\widetilde{S}_r=\exp\Psi\cap
B_\varrho(0,r),$ for every (small enough) $r>0$. Hence, we can use the above remarks to compute \eqref{dens1} and as in the non-characteristic case, we  use
\eqref{dens2}. We have
\begin{eqnarray*}\widehat{\delta}_\frac{1}{r}\Psi&=&
\widehat{\delta}_\frac{1}{r}\left\{\left(\xi_1,...,\xi_{\alpha-1},
\psi(\xi),\xi_{\alpha+1},...,\xi_n \right)\, :\, \xi\in
\ee_\alpha^\perp\right\}\\
&=&\left\{\left(\frac{{\xi_1}}{r},...,\frac{\xi_{\alpha-1}}{r^{{\rm
ord}(\alpha-1)}},
\frac{\psi(\xi)}{r^i},\frac{\xi_{\alpha+1}}{r^{{\rm
ord}(\alpha+1)}},...,\frac{\xi_n}{r^k} \right)\, :\, \xi\in
\ee_\alpha^\perp\right\}.\end{eqnarray*}Setting
$$\zeta:=\widehat{\delta}_\frac{1}{r}
\xi=\left(\frac{{\xi_1}}{r},...,\frac{\xi_{\alpha-1}}{r^{{\rm
ord}(\alpha-1)}}, 0,\frac{\xi_{\alpha+1}}{r^{{\rm
ord}(\alpha+1)}},...,\frac{\xi_n}{r^k} \right),$$where
$\zeta=(\zeta_1,...,\zeta_{\alpha-1},0,\zeta_{\alpha+1},...,\zeta_n)\in\ee_\alpha^\perp$,
yields
$$\widehat{\delta}_\frac{1}{r}\Psi=
\left\{\left(\zeta_1,...,\zeta_{\alpha-1},
\frac{\psi\left(\widehat{\delta}_r\zeta\right)}{r^i},\zeta_{\alpha+1},...,\zeta_n\right)\,:\,
\zeta\in\ee_\alpha^\perp\right\}.$$By hypothesis $\psi\in
\cont^i(U_0)$, where $U_0$ is an open neighborhood of
$0\in\ee_\alpha^{\perp}\cong\R^{n-1}$. Furthermore, one has
$\widehat{\delta}_{r}\zeta\longrightarrow 0$ as long as $r\rightarrow 0^+$. So
we have to study the following limit
\begin{equation}\label{lim}\widetilde{\psi}(\zeta):=\lim_{r\rightarrow
0^+}\frac{\psi\left(\widehat{\delta}_r\zeta\right)}{r^i},\end{equation}
whenever exists. The first remark is that, when this limit equals
$+\infty$, we have
$$\lim_{r\rightarrow
0^+}\frac{\per( \widetilde{S}_r)}{r^{Q-1}}=\lim_{r\rightarrow
0^+}\per\left(\exp\left(\widehat{\delta}_\frac{1}{r}\Psi\right)\cap
B_\varrho(0,1)\right)=0,$$ because $\exp\left(\widehat{\delta}_\frac{1}{r}\Psi\right)\cap
B_\varrho(0,1)\longrightarrow\emptyset$
as long as $r\rightarrow 0^+$.

At this point, making use of a Taylor's expansion of $\psi$ together with
$\rm(j)$ and $\rm(jj)$, yields
\begin{eqnarray*}\psi\left(\widehat{\delta}_{r}\zeta\right)&=
&\psi(0) +\sum_{j_1}r^{{\rm
ord}(j_1)}\frac{\partial\psi}{\partial\zeta_{j_1}}(0)\,\zeta_{j_1}
+\sum_{j_1, j_2}r^{{\rm ord}(j_1)+{\rm
ord}(j_2)}\frac{\partial^{\scriptsize(2)}\psi}{\partial\zeta_{j_1}
\partial\zeta_{j_2}}(0)\,\zeta_{j_1}\zeta_{j_2}\\&&+...+
\sum_{j_1,..., j_i}r^{{\rm ord}(j_1)+...+{\rm
ord}(j_i)}\frac{\partial^{\scriptsize(i)}\psi}{\partial\zeta_{j_1}...
\partial\zeta_{j_i}}(0)\,\zeta_{j_1}\cdot...\cdot\zeta_{j_i}+{\rm
o}\left(r^i\right)\end{eqnarray*}\begin{eqnarray*}&=& \sum_{j_1}r^{{\rm
ord}(j_1)}\frac{\partial\psi}{\partial\zeta_{j_1}}(0)\,\zeta_{j_1}
+\sum_{j_1, j_2}r^{{\rm ord}(j_1)+{\rm
ord}(j_2)}\frac{\partial^{\scriptsize(2)}\psi}{\partial\zeta_{j_1}\partial\zeta_{j_2}}(0)\,\zeta_{j_1}\zeta_{j_2}\\&&+...+
\sum_{j_1,..., j_i}r^{{\rm ord}(j_1)+...+{\rm
ord}(j_l)}\frac{\partial^{\scriptsize(l)}\psi}{\partial\zeta_{j_1}...
\partial\zeta_{j_i}}(0)\,\zeta_{j_1}\cdot...\cdot\zeta_{j_l}+{\rm
o}\left(r^i\right)\end{eqnarray*}as $r\rightarrow 0^+$. Therefore
\begin{eqnarray*}\frac{\psi\left(\widehat{\delta}_{r}\zeta\right)}{r^i}&=
& \sum_{j_1}r^{{\rm
ord}(j_1)-i}\frac{\partial\psi}{\partial\zeta_{j_1}}(0)\,\zeta_{j_1}
+\sum_{j_1, j_2}r^{{\rm ord}(j_1)+{\rm
ord}(j_2)-i}\frac{\partial^{\scriptsize(2)}\psi}{\partial\zeta_{j_1}\partial\zeta_{j_2}}(0)\,
\zeta_{j_1}\zeta_{j_2}\\&&+...+ \sum_{j_1,..., j_l}r^{{\rm
ord}(j_1)+...+{\rm
ord}(j_l)-i}\frac{\partial^{\scriptsize(l)}\psi}{\partial\zeta_{j_1}...
\partial\zeta_{j_l}}(0)\,\zeta_{j_1}\cdot...\cdot\zeta_{j_l}+{\rm
o}\left(1\right)\end{eqnarray*}as $r\rightarrow 0^+$. By applying the
hypothesis
$\frac{\partial^{\scriptsize(l)}
\psi}{\partial\zeta_{j_1}...\partial\zeta_{j_l}}(0)=0$ whenever ${\rm
ord}(j_1)+...+{\rm ord}(j_l)<i$,it follows that
\eqref{lim} exists. Setting
\begin{equation*}\Psi_\infty=\lim_{r\rightarrow
0^+}\widehat{\delta}_\frac{1}{r}\Psi=
\left\{\left(\zeta_1,...,\zeta_{\alpha-1},\widetilde{\psi}
(\zeta),\zeta_{\alpha+1},...,\zeta_n\right)\,:\,
\zeta\in\ee_\alpha^\perp\right\},\end{equation*}where
$\widetilde{\psi}$ is the polynomial function of homogeneous degree
$i={\rm ord}(\alpha)$ given by
\begin{eqnarray*}\widetilde{\psi}(\zeta)=\sum_{\stackrel{{j_1}}{\scriptsize{\rm
ord}(j_1)=i}}\frac{\partial\psi}{\partial\zeta_{j_1}}(0)\,\zeta_{j_1}+\ldots
+ \sum_{\stackrel{{j_1,...,j_l}}{\scriptsize{{\rm
ord}(j_1)+...+{\rm
ord}(j_l)=i}}}\frac{\partial^{\scriptsize(l)}\psi}{\partial\zeta_{j_1}...
\partial\zeta_{j_l}}(0)\,\zeta_{j_1}\cdot...\cdot\zeta_{j_l},\end{eqnarray*}
yields $S_\infty=x\bullet \Psi_\infty$ and the thesis  
follows.
\end{proof}

\begin{oss}\label{boundonmetricfactor}The metric factor is not constant, in general. It
turns out to be constant, for instance,  by assuming that $\varrho$
is symmetric on all layers; see \cite{Mag3}.
Anyway, it is {\rm uniformly bounded} by two positive constants $K_1$
and $K_2$. This can be seen by using the \textquotedblleft ball-box metric\textquotedblright \footnote{By definition one has ${\rm Box}(x,r)=x\bullet {\rm Box}(0,r)$ for every $x\in\GG$, where
$${\rm Box}(0,r)=\left\{y=\exp\left(\sum_{i=1}^k y\ci\right)\in\GG\,:\,
\|y\ci\|_\infty\leq r^i\right\}.$$We stress that $y\ci=\sum_{j_i\in
I\ci}y_{j_i}\ee_{j_i}$ and that $\|y\ci\|_\infty$ denotes the sup-norm on
the $i$-th layer of $\gg$; see \cite{Gr1},
\cite{Montgomery}.} and a homogeneity argument. Let $S$ be as in Theorem \ref{BUP}, Case (i). Let $B_\varrho(x,1)$ the unit $\varrho$-ball centered at $x\in {\rm Int} (S\setminus C_S)$ and let  $r_1, r_2\geq 0$ be such that
$0<r_1\leq 1\leq r_2$. In particular, ${\rm Box}(x,r_1)\subseteq
B_\varrho(x,1)\subseteq{\rm Box}(x,r_2)$. Recall that
$$k_\varrho(\nn(x))=\per(\mathcal{I}(\nn(x))\cap B_\varrho(x,1))=\Ar(\mathcal{I}(\nn(x))\cap B_\varrho(x,1)),$$
where $\mathcal{I}(\nn(x))$ denotes the vertical hyperplane
orthogonal to $\nn(x)$. By homogeneity, one has $\delta_{t}{\rm
Box}(0,1/2)={\rm Box}(0,t/2)$ for every $t\geq 0$ and by an elementary
computation\footnote{The unit box ${\rm Box}(x,1/2)$ is the
left-translated at $x$ of ${\rm Box}(0,1/2)$ and so, by
left-invariance of $\per$, the computation can be done at
$0\in\GG$.  Since ${\rm Box}(0,1/2)$ is the unit hypercube of
$\R^n\cong\gg$, it remains to estimate the
$\per$-measure of the intersection of ${\rm Box}(0,1/2)$ with a
 generic vertical hyperplane through the origin
$0\in\R^n$.   If $\mathcal{I}(X)$
is the vertical hyperplane through  $0\in\Rn$ and
orthogonal to $X\in\HH$, we get that
$$1\leq\Ar({\rm Box}(0,1/2)\cap\mathcal{I}(X))\leq\sqrt{n-1},$$where
 $\sqrt{n-1}$ is  the diameter of any
face of the unit hypercube of $\Rn$. Therefore
\begin{eqnarray*}\big(\delta_{2r_1}{\rm
Box}(0,1/2)\cap\mathcal{I}(X)\big)\subseteq
\big(B_\varrho(0,1)\cap\mathcal{I}(X)\big)\subseteq
\big(\delta_{2r_2}{\rm Box}(0,1/2)\cap\mathcal{I}(X)\big)
\end{eqnarray*}and so\begin{eqnarray*}{(2r_1)}^{Q-1}&\leq&{(2r_1)}^{Q-1}\Ar({\rm Box}(0,1/2)\cap\mathcal{I}(X))
\leq
\Ar(B_\varrho(0,1)\cap\mathcal{I}(X))\\&=&\kappa_\varrho(X)\leq{(2r_2)}^{Q-1}\Ar({\rm
Box}(0,1/2)\cap\mathcal{I}(X))\leq\sqrt{n-1}{(2r_2)}^{Q-1}.\end{eqnarray*}}
we get that
$$(2r_1)^{Q-1}\leq k_\varrho(\nn(x))\leq
\sqrt{n-1}\,(2r_2)^{Q-1}.$$Set
$K_1:=(2r_1)^{Q-1}$,
$K_2:=\sqrt{n-1}\,{(2r_2)}^{Q-1}$. The previous argument shows that one can always
choose two positive constants $K_1,\,K_2$, independent of $S$, such
that 
\begin{eqnarray}\label{emfac}K_1\leq \kappa_\varrho(\nn(x))\leq
K_2\qquad \forall\,\,x\in {\rm Int}({S}\setminus C_{S}).
 \end{eqnarray} \end{oss}

\section{Isoperimetric Inequality on hypersurfaces}\label{mike}

\subsection{Statement of the main result and further remarks}\label{mike0}
 \begin{no} Set  $${\bf r}(S):=\sup_{x\in {\rm Int}(S\setminus C_S)} r_0(x),$$ where $r_0(x)=2\left(\frac{\per({S})}{k_\varrho(\nn(x))}\right)^{{1}/{Q-1}}$; see Lemma \ref{lem} and Notation \ref{klkl}.\end{no}

\begin{teo}[Isoperimetric-type Inequality]\label{ghaioio}Let
$S\subset\GG$ be a compact hypersurface of class $\cont^2$  with
boundary $\partial S$  (piecewise) $\cont^1$ and assume that the horizontal mean curvature $\MS$ of $S$ is integrable, that is,  $\MS\in L^1(S; \sigma\rr^{n-1})$. There exists  $C_{Isop}>0$ only
dependent on $\GG$ and on the homogeneous metric $\varrho$ such that   \begin{eqnarray}\label{2gha}\left(\per({S})\right)^{\frac{Q-2}{Q-1}}\leq
C_{Isop}\left(\int_S
|\MS|\,\per +\nis(\partial S)+ \sum_{i=2}^k  \left( {\bf r}(S)\right)^{i-1} \int_{\partial
S }|\P\ciss\eta|\,\sigma\rr^{n-2} \right).\end{eqnarray}In particular, if $\partial S=\emptyset$, it follows that \begin{eqnarray}\label{2gha}\left(\per({S})\right)^{\frac{Q-2}{Q-1}}\leq
C_{Isop} \int_S
|\MS|\,\per.\end{eqnarray}
\end{teo}
 
In general, we have $C_{Isop}=\max\{C_1, C_2\}$, where  $C_1= {2^{Q}}/{K_1^{\frac{1}{Q-1}}}$ and  $C_2=\max_{i=2}^k i\,{\bf c}_i\,h_i$. Here $K_1$ denotes a (universal) lower bound on the metric factor $k_\varrho(\nn(x))$; see Remark \ref{boundonmetricfactor}.  Furthermore, the constants ${\bf c}_i\,(i=2,...,k)$  has been introduced in Definition \ref{2iponhomnor1}. Note that if $\partial S=\emptyset$,  we can take $C_{Isop}= C_1$. The next example can be helpful in understanding our result. 
\begin{es}[Key example] \label{zazaz1}Let $\mathbb H^1$ be the first Heisenberg group. In particular, let $\{X, Y, T\}$ be the standard left invariant frame for the Lie algebra $\mathfrak h^1=\HH\oplus \mathrm{span}_\R T$ of $\mathbb H^1$. We recall that $X=\partial_x-\frac{y}{2}\partial_t$, $Y=\partial_y+\frac{x}{2}\partial_t$ and $T=\partial_t$, where $(x, y, t)$ are exponential coordinates of the generic point of  $\mathbb H^1$. Let $\mathcal{I}(X(0))=\{(x, y, t)\in\mathbb H^1: x=0\}$. The plane  $\mathcal{I}(X(0))$ is a \textquotedblleft vertical plane\textquotedblright passing through the identity  $0\in\mathbb H^1$. More precisely, $\mathcal{I}(X(0))$ turns out to be a maximal ideal of the Lie algebra $\mathfrak h^1$. It is well known that the horizontal mean curvature of any vertical plane turns out to be zero. Now let us consider a  rectangle $R_{\texttt{h,v}}\subsetneq \mathcal{I}(X(0))$ with sides parallel to the directions $Y$ and $T$, respectively. In other words, we are assuming that $$R_{\texttt{h,v}}=\left\lbrace (x, y, t)\in\mathbb H^1: x=0,\, |y|\leq \texttt{h},\, |t|\leq \texttt{v} \right\rbrace.$$The $\HH$-perimeter of $R_{\texttt{h,v}}$ coincides with the  Euclidean area and hence is obtained by multiplying  (horizontal) base and (vertical) height, that is,   $\sigma\cc^2(R_{\texttt{h,v}})=\texttt{h}\cdot \texttt{v}$. It is not difficult to see that the only non-zero contributions to the homogeneous measure $\sigma\cc^1$ of $\partial R_{\texttt{h,v}}$ come from the vertical sides. In fact  $\sigma\cc^1\left(\partial R_{\texttt{h,v}}\setminus\{x=0,\,|y|<\texttt{h}\}\right)=2\texttt{v}$. But, the \textquotedblleft horizontal sides\textquotedblright \,
have a non-zero Riemannian $1$-dimensional measure and, up to a normalization constant,  their $1$-dimensional intrinsic Hausdorff measure  is given by $\mathcal{H}_{\varrho}^1\left(\partial R_{\texttt{h,v}}\setminus\{x=0,\,|t|<\texttt{v}\}\right)=2\texttt{h}$. Hence, even if we fix the $\sigma\cc^1$-measure of $\partial R_{\texttt{h,v}}$, we can indefinitely increase the $\HH$-perimeter of $ R_{\texttt{h,v}}$ by increasing the size of the horizontal sides.
\end{es}

 Let
$S\subset\GG$ be a compact hypersurface of class $\cont^2$  with
boundary $\partial S$. Notice that the previous example shows that in order to bound the $\HH$-perimeter $\per$ in terms of the $(Q-2)$-homogeneous measure $\nis$ \underline{only}, we need some extra assumptions on the characteristic set $C_{\partial S}$ of the boundary. 
More precisely, we need -at least- to assume that $\sigma\rr^{n-2}(C_{\partial S})=0$. We also stress that, assuming enough regularity on $\partial S$ it can be sufficient for the validity of the last condition (in fact, we have already seen that if $\partial S$ is of class $\cont^2$ and if $\dim \VV\geq 2$, then $\dim_{\rm Eu-Hau}(C_N)\leq n-3$; see Theorem
\ref{baloghteo} and Remark \ref{11baloghteo}. We also recall that the same assertion holds true  for  Heisenberg groups $\mathbb H^r$ with $r>1$).

\begin{war}\label{cpzzo}  The present version of Theorem \ref{ghaioio}    corrects some  previous formulations of it posted on ArXiv. We would like to spend some words on this new version. First of all, we have to observe that the Isoperimetric Inequality stated in Theorem \ref{ghaioio} will be proved by following the classical scheme already discussed in the Introduction. In particular, the starting point is the so-called \rm Monotonicity Inequality, \it see Theorem \ref{rmonin}. More precisely, let
$S\subset\GG$ be a compact hypersurface of class $\cont^2$  with (piecewise) $\cont^1$
boundary $\partial S$. We shall show that for every $x\in {\rm Int}(S\setminus C_S)$ the
following ordinary differential inequality
holds\[-\frac{d}{dt}\frac{\per({S}_t)}{t^{Q-1}}
\leq \frac{\mathcal{A}(t)+{\mathcal{B}}_2(t)}{t^{Q-1}}
\]for $\mathcal{L}^1$-a.e. $t>0$, where $S_t=S\cap B_\varrho(x, t)$ and $B_\varrho(x, t)$ denotes the homogeneous $\varrho$-ball centered at $x$ and of radius $t$; for the very definition of the integrals $\mathcal{A}(t),\,{\mathcal{B}}_2(t)$ we refer the reader to Definition \ref{lsd34} below. The key fact in order to prove this inequality, will be a density type estimate; see Lemma \ref{kr}. The proof of the Isoperimetric Inequality can then be done once we estimate  the integrals $\mathcal{A}(t),\,{\mathcal{B}}_2(t)$. The first term can again be estimated by using a blow-up method and it turns out that $\mathcal{A}(t)\leq \int_{S_t}|\MS|\per$; see Lemma \ref{crux00}. Nevertheless, in order to estimate the integral ${\mathcal{B}}_2(t)$, \underline{we cannot  use local estimates} and/or blow-up results. More precisely, we stress that ${\mathcal{B}}_2(t)=\int_{\partial S\cap  B_\varrho(x, t)} f\,\nis$, for a suitable function $f:\partial S\longrightarrow \R_+$; see  Definition \ref{lsd34}. Below, we shall show that
${\mathcal{B}}_2(t)\leq \nis(\partial
S\cap B_\varrho(x, t)) + \widetilde{{\mathcal{B}}_2}(t)$ where $$\widetilde{{\mathcal{B}}_2}(t)\lesssim \sum_{i=2}^k  t^{i-1}\int_{\partial S\cap B_\varrho(x, t) }|\P\ciss\eta|\,\sigma\rr^{n-2};$$see Lemma  \ref{417}. Note that the  right-hand side turns out to be $(Q-2)$-homogeneous with respect to Carnot dilations but, in general, cannot be expressed in terms of the measure $\nis$ only. It is important to observe that no blow-up method can be profitably used here: the reason  is that the center of the $\varrho$-ball belongs to ${\rm Int}(S\setminus C_S)$. Hence there can be large balls intersecting a small portion of $\partial S$ and, on the contrary, there can be  small balls very close to  $\partial S$. 
\end{war}
Taken all together, the previous remarks suggest that, 
in order to prove a weaker formulation of the Isoperimetric Inequality for the $\HH$-perimeter $\per$,  which  only uses  the homogeneous measure $\nis$ on the boundary, we need -at least- some extra assumptions on the characteristic set $C_{\partial S}$ of the boundary and, in particular, \it is necessary that  $\sigma^{n-2}\rr(C_{\partial S})=0$. \rm
We end this introductory section by formulating an interesting related open question. 
\begin{Problem}\label{pope}Let $\Sigma^{n-1}$ denote the class of all compact hypersurfaces $S\subset\GG$  with (piecewise) $\cont^1$ boundary $\partial S$ such that  $\sigma^{n-2}\rr(C_{\partial S})=0$. Furthermore, let 
 us set
$$\mu(\partial S):=\sum_{i=2}^k  \left( {\bf r}(S)\right)^{i-1} \int_{\partial
S }|\P\ciss\eta|\,\sigma\rr^{n-2}.$$Is there a dimensional constant $C_{dim}<+\infty$ such that $\frac{\mu(\partial S)
}{\nis(\partial S)}\leq C_{dim}$ for every $S\in \Sigma^{n-1}$? 
\end{Problem}
 
 In other words, we are asking if  $\sup_{S\in \Sigma^{n-1}}\frac{\mu(\partial S)
}{\nis(\partial S)}<\infty.$ Notice  that the ratio $\frac{\mu(\partial S)
}{\nis(\partial S)}$ is $0$-homogeneous with respect to Carnot dilations. Furthermore,  $\frac{\mu(\partial S)
}{\nis(\partial S)}$ can always be estimated by\footnote{Roughly speaking,  this assertion can be proved by using the fact that, if $\sigma\rr^{n-2}(C_{\partial S})=0$, then $\sigma\rr^{n-2}\left(\left\{x\in\partial S: |\PH (\nu\wedge\eta)|\leq \epsilon\right\}\right)\longrightarrow 0$ as $\epsilon\rightarrow 0$, where $\nu\wedge\eta$ is any unit normal $2$-vector orienting $\partial S$.} a constant which depends on the characteristic set of $\partial S$ for any  $S\in \Sigma^{n-1}$. As a matter of fact, 
Problem \ref{pope} is equivalent to \underline{understand if such an estimate holds} \underline{with a universal constant}.
Clearly, a positive answer to this problem would  automatically imply the following  inequality:
$$\left(\per({S})\right)^{\frac{Q-2}{Q-1}}\leq
C'_{Isop}\left(\int_S
|\MS|\,\per +\nis(\partial S)\right),$$with $C'_{Isop}=C_{Isop}(1+C_{dim})$. Note also that, a (purely) horizontal Sobolev-type inequality can be proved  \underline{only if} the last inequality holds true.

\begin{oss}\label{ues}An equivalent formulation of Problem \ref{pope} is the following:\begin{itemize}
 \item  are there  dimensional constants $0<C_i<+\infty$, $i\in\{2,...,k\}$, such that $$\left(\per(S)\right)^{\frac{i-1}{Q-1}} \leq C_i\frac{\nis(\partial S)}{ \int_{\partial
S }|\P\ciss\eta|\,\sigma\rr^{n-2}
} $$ whenever $S\in \Sigma^{n-1}$? 
\end{itemize} 
\end{oss}

\begin{es}[The case of the Heisenbeg group $\mathbb H^1$] In the first Heisenberg group $\mathbb H^1$, the problem just formulated in Remark \ref{ues} becomes: is there a constant $0<C<+\infty$ such that $$\left(\sigma\cc^3(S)\right)^{\frac{1}{3}} \leq C \frac{\sigma\cc^1(\partial S)}{ \int_{\partial
S }|\P{_{^{_{\HH_2 S}}}}\eta|\,\sigma\rr^{1}
} $$for any $S\in \Sigma^{n-1}$? Here $\HH_2 S$ corresponds to the tangential direction $\mathbf t=|\PH \nu| T-\langle T, \nu \rangle \nn$. It is not difficult to show that $\sigma\cc^1(\partial S)$ coincides with the integral over $\partial S$ of the contact form $\theta=T^\ast=dz+\frac{ydx-xdy}{2}$ and  hence equals the Euclidean area of the projection of $S$ onto the $xy$-plane. Moreover, the integral at the denominator can be regarded (up to a normalization constant) as the $1$-dimensional intrinsic Hausdorff measure $\mathcal H^1_{\varrho}$ of  $\partial S$. Obviously, in order to prove this inequality, the assumption that  $\sigma^{1}\rr(C_{\partial S})=0$ \underline{cannot} be removed. 
\end{es}
The next sections are devoted to prove Theorem \ref{ghaioio}. Finally,
in Section \ref{sobineqg} we shall discuss some related Sobolev-type
inequalities.

\subsection{Linear isoperimetric inequality and monotonicity
formula}\label{wlineq}

Let $S\subset\GG$ be a compact hypersurface of class $\cont^2$   with
boundary $\partial{S}$. Let $\nu$ denote the outward-pointing unit
normal vector along $S$ and $\varpi=\frac{\P\vv\nu}{|\PH\nu|}$. Furthermore, we shall
set$$\varpi\ci:=\P\ci\varpi=\sum_{\alpha\in I\ci}\varpi_\alpha
X_\alpha$$for $i=2,...,k$. Note that
$\frac{\nu}{|\PH\nu|}=\nn+\sum_{i=2}^k\varpi\ci$.

\begin{no}Let
$\eta$ be the outward-pointing unit normal vector $\eta$ along $\partial
S$. Note that, at each point $x\in\partial S$,
$\eta(x)\in\TT_xS$. In the sequel, we shall set
 $\chi:=\frac{\P\vs\eta}{|\P\ss\eta|}$ and $\chi\ciss:=\P\ciss\chi$ for any
$i=2,...,k$; see Remark \ref{indbun}.\end{no}

We have
$\chi=\sum_{i=2}^k\chi\ciss$ and
$\frac{\eta}{|\P\ss\eta|}=\eta\ss+\chi$; see also Remark
\ref{measonfr}.
\begin{Defi}\label{berlu}Fix a point $x\in\GG$ and consider the \textquotedblleft Carnot homothety\textquotedblright centered
at $x$, that is,  $\delta^x(t,y):=x\bullet\delta_t (x^{-1}\bullet
y)$. The variation vector  of
$\delta^x_t(y):=\delta^x(t,y)$ at $t=1$ is given by
 $Z_x:=\frac{\partial \delta^x_t}{\partial
t}\bigg|_{t=1}.$ \end{Defi}

Let us apply the 1st variation of $\per$, with a  special 
choice of the variation vector. So fix
a point $x\in\GG$ and consider the Carnot homothety
$\delta_t^x(y):=x\bullet\delta_t (x^{-1}\bullet y)$ centered at
$x$.\begin{oss}Without loss of generality, by using group translations, we can choose $x=0\in\GG$. In this case, we have
$$\vartheta^0(t,y)=\delta_ty=\esp \left( ty\cc,t^2y\cd,
t^3y\ctr,...,t^iy\ci,...,t^ky\ck \right) \qquad \forall\,\,t\in
\R,$$where $y\ci=\sum_{j_i\in I\ci} y_{j_i}\ee_{j_i}$ and  $\esp$
is the Carnot exponential mapping; see Section \ref{prelcar}.
Thus the variation vector  related to
$\delta^0_t(y):=\delta^0(t,y)$, at $t=1$, is simply given by
$$Z_0:=\frac{\partial \delta^0_t}{\partial
t}\Big|_{t=1}=\frac{\partial \delta_t}{\partial
t}\Big|_{t=1}=y\cc+ 2y\cd+...+ky\ck.$$
 
\end{oss}
By  invariance of $\per$ under Carnot dilations, one
gets$$\frac{d}{dt}\delta_t^\ast\per\Big|_{t=1}=(Q-1)\,\per({S}).$$Furthermore,
by using the 1st variation formula, it follows that
\begin{equation*}(Q-1)\,\per({S})=-\int_{{S}}\MS\left\langle Z_{x}, \frac{\nu}{|\PH\nu|}\right\rangle\,\per +
\int_{\partial{S}} \left\langle \left( Z_x\ot-\frac{\langle Z_x\op,\nu\rangle}{|\PH\nu|}\nn\ot \right),\frac{\eta}{|\P\ss\eta|}\right\rangle\,\nis.\end{equation*}

\begin{lemma}The following holds  \begin{eqnarray}\label{inA}\frac{1}{\varrho_{x}}\left|\left\langle Z_{x},
\frac{\nu}{|\PH\nu|}\right\rangle\right| \leq 
\left(1+\sum_{i=2}^k i\,{\bf c}_i\,\varrho_{x}^{i-1}|\varpi\ci|\right).\end{eqnarray}Furthermore, we have $\left(1+\sum_{i=2}^k i\,{\bf c}_i\,\varrho_{x}^{i-1}|\varpi\ci|\right)\leq 1+O\left(\frac{\varrho_x}{|\P\cc\nu|}\right) $ as long as $\varrho_x\rightarrow 0^+$.
\end{lemma}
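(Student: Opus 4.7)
The plan is to reduce to $x=0$ by left-translation and then carry out a layer-by-layer computation of the inner product using the stratification of $\gg$.

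Since $\per$, $\varrho$, $\PH$ and all the relevant geometric data are left-invariant, and since $\delta_t^x(y) = x\bullet \delta_t(x^{-1}\bullet y)$, I would replace $S$ with $x^{-1}\bullet S$, reducing the problem to the case $x=0$. In these coordinates, the Remark preceding the lemma yields $Z_{0} = y\cc + 2\,y\cd + \cdots + k\,y\ck$, and $\varrho_x$ becomes $\varrho(y)=\|y\|_\varrho$. I would then decompose $\frac{\nu}{|\PH\nu|} = \nn + \sum_{i=2}^k \varpi\ci$, which is the stratified decomposition already recorded. Because distinct layers $\HH_i$ and $\HH_j$ are orthogonal with respect to the left-invariant Riemannian metric $g$ (the adapted frame $\underline X$ being orthonormal), cross-terms vanish and
\begin{equation*}
\left\langle Z_{0}, \frac{\nu}{|\PH\nu|}\right\rangle = \langle y\cc, \nn\rangle + \sum_{i=2}^{k} i\,\langle y\ci, \varpi\ci\rangle.
\end{equation*}

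Applying Cauchy--Schwarz in each layer and using $|\nn|=1$ yields
\begin{equation*}
\left|\left\langle Z_{0}, \frac{\nu}{|\PH\nu|}\right\rangle\right| \leq |y\cc| + \sum_{i=2}^{k} i\,|y\ci|\,|\varpi\ci|.
\end{equation*}
I now invoke the pointwise bounds of Definition \ref{2iponhomnor1}: $|y\cc|\le \varrho(y)=\varrho_x$ and $|y\ci|\le c_i\,\varrho(y)^i = c_i\,\varrho_x^i$ for $i=2,\ldots,k$. Substituting these in and dividing by $\varrho_x$ produces exactly inequality \eqref{inA}.

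For the asymptotic bound, I would use that $|\varpi| = |\P\vv\nu|/|\PH\nu|\le 1/|\P\cc\nu|$, since $|\P\vv\nu|\le |\nu|=1$. Hence $|\varpi\ci|\le 1/|\P\cc\nu|$ for each $i\ge 2$, and therefore
\begin{equation*}
\sum_{i=2}^{k} i\,c_i\,\varrho_x^{i-1}\,|\varpi\ci| \;\le\; \frac{\varrho_x}{|\P\cc\nu|}\sum_{i=2}^{k} i\,c_i\,\varrho_x^{i-2}.
\end{equation*}
As $\varrho_x\to 0^+$ the inner sum is bounded (its leading term being $2c_2$), so the right-hand side is $O(\varrho_x/|\P\cc\nu|)$, which gives the second claim. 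No genuine obstacle is expected: the statement is essentially a bookkeeping lemma that combines the stratified form of $Z_x$ with the homogeneity bounds on $\varrho$. The only mildly delicate point is the control of $|\varpi\ci|$, which cannot be bounded uniformly independently of the distance to the characteristic set and must be traded for the factor $1/|\P\cc\nu|$.
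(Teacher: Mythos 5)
Your proof is correct and follows essentially the same route as the paper: reduce to $x=0$ by left-invariance, expand $\langle Z_0,\nn+\varpi\rangle$ layer by layer, apply Cauchy--Schwarz, and invoke the bounds $|y\ci|\leq c_i\varrho^i$ from Definition \ref{2iponhomnor1}. You additionally spell out the estimate $|\varpi\ci|\leq 1/|\P\cc\nu|$ behind the asymptotic claim, which the paper leaves implicit, and you correctly carry the factor $i$ in the first display where the paper's version has a small typographical slip.
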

Here and elsewhere, we  use  the \textquotedblleft Big O\textquotedblright notation and the \textquotedblleft little o\textquotedblright notation.
\begin{proof}Without loss of generality, by left-invariance, let $x=0\in\GG$. 
Note
that
$$\left\langle Z_{0}, \frac{\nu}{|\PH\nu|}\right\rangle=
\langle Z_{0}, (\nn+\varpi)\rangle=\langle
y\cc,\nn\rangle+\sum_{i=2}^k\langle
y\ci,\varpi\ci\rangle.$$ 
By Cauchy-Schwartz inequality, we immediately get that
 \[\left|\left\langle Z_{0},
\frac{\nu}{|\PH\nu|}\right\rangle\right|\leq |y\cc|+\sum_{i=2}^k
i\,|y\ci||\varpi\ci|.\] According with Definition
\ref{2iponhomnor1}, let ${\bf c}_i\in\R_+$ be constants such that
$|y\ci|\leq {\bf c}_i \varrho^i(y)$ for $i=2,...,k.$ Using the last
inequality yields 
\[\left|\left\langle Z_{0},
\frac{\nu}{|\PH\nu|}\right\rangle\right| \leq 
\varrho\left(1+\sum_{i=2}^k i\,{\bf c}_i\varrho^{i-1}|\varpi\ci|\right)\leq\varrho\left(1+O\left(\frac{\varrho}{|\P\cc\nu|} \right) \right)\] as long as $\varrho\rightarrow 0^+$.
\end{proof}

\begin{Defi}\label{lsd34}Let $\GG$ be a $k$-step Carnot group and $S\subset\GG$ be a 
hypersurface of class $\cont^2$ with (piecewise)
$\cont^1$ boundary $\partial S$. Moreover, let $S_r:=S\cap B_\varrho(x,r)$, where
$B_\varrho(x,r)$ is the open $\varrho$-ball centered at $x\in \GG$
and of radius $r>0$. We shall set
\begin{eqnarray*}\mathcal{A}(r)&:=&\int_{S_r}|\MS|\left(1+\sum_{i=2}^k
i\,c_i\varrho_x^{i-1}|\varpi\ci|\right)\,\per,\\\mathcal{B}_0(r)&:=&\int_{\partial
S_r}\frac{1}{\varrho_x}\left|\left\langle
\left( Z_x\ot-\frac{\langle Z_x\op,\nu\rangle}{|\PH\nu|}\nn\ot \right),\frac{\eta}{|\P\ss\eta|}\right\rangle\right|\,\nis,\\\mathcal{B}_1(r)&:=&\int_{\partial
B_\varrho(x, r)\cap S}\frac{1}{\varrho_x}\left|\left\langle
\left( Z_x\ot-\frac{\langle Z_x\op,\nu\rangle}{|\PH\nu|}\nn\ot \right),\frac{\eta}{|\P\ss\eta|}\right\rangle\right|\,\nis,\\{\mathcal{B}}_2(r)&:=&\int_{\partial
S\cap B_\varrho(x, r)}\frac{1}{\varrho_x}\left|\left\langle
\left( Z_x\ot-\frac{\langle Z_x\op,\nu\rangle}{|\PH\nu|}\nn\ot \right),\frac{\eta}{|\P\ss\eta|}\right\rangle\right|\,\nis,\end{eqnarray*}where
$\varrho_x(y):=\varrho(x,y)$ for $y\in S$, that is,  $\varrho_x$
denotes the $\varrho$-distance from a fixed point $x\in\GG$.
\end{Defi}
Note that $\mathcal{B}_0(r)=\mathcal{B}_1(r)+\mathcal{B}_2(r)$. We clearly have the following:

\begin{Prop}[Linear Inequality]\label{correctdimin}Let ${S}\subset\GG$ be a compact hypersurface of class $\cont^2$  
 with (piecewise) $\cont^1$ boundary $\partial{S}$. Let
 $r$ be the radius of a $\varrho$-ball centered at $x\in\GG$. Then
\begin{eqnarray*}(Q-1)\,\per({S}_r)\leq
{r}\left(\mathcal{A}(r)+\mathcal{B}_0(r)\right).\end{eqnarray*}
\end{Prop}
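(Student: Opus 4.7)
The plan is to apply Theorem \ref{1vg} to the compact hypersurface with boundary $S_r = S \cap B_\varrho(x,r)$, taking as variation the Carnot homothety $\vartheta_t(y) := \delta_t^x(y) = x \bullet \delta_t(x^{-1}\bullet y)$, whose initial velocity is precisely the vector field $Z_x$ from Definition \ref{berlu}. The whole argument is then a matter of bounding the resulting integrals using the lemma just proven and the fact that $\varrho_x \leq r$ throughout $S_r$.

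First I would compute the left-hand side of \eqref{fva}-\eqref{fva2formula} directly, using the homogeneity $\delta_t^\ast \per = t^{Q-1}\per$. Since the $\HH$-perimeter of the image $\vartheta_t(S_r)$ scales like $t^{Q-1}\per(S_r)$, differentiating at $t=1$ gives
\[
I_{S_r}(Z_x,\per) = \frac{d}{dt}\Bigl(t^{Q-1}\per(S_r)\Bigr)\Bigl|_{t=1} = (Q-1)\,\per(S_r).
\]
On the other hand, Theorem \ref{1vg} (applicable because $\MS \in L^1(S;\sigma\rr^{n-1})$ by hypothesis, and $S_r$ is compact with piecewise $\cont^1$ boundary $\partial S_r = (\partial B_\varrho(x,r)\cap S) \cup (\partial S \cap B_\varrho(x,r))$, up to a negligible set of non-smooth intersections) yields
\[
(Q-1)\,\per(S_r) = -\int_{S_r}\MS\,\Bigl\langle Z_x,\tfrac{\nu}{|\PH\nu|}\Bigr\rangle\,\per + \int_{\partial S_r}\Bigl\langle Z_x\ot - \tfrac{\langle Z_x\op,\nu\rangle}{|\PH\nu|}\nn\ot,\tfrac{\eta}{|\P\ss\eta|}\Bigr\rangle\,\nis.
\]

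Next, I would take absolute values in both integrals and estimate each one separately. For the interior (mean-curvature) integral, the lemma just proved gives pointwise on $S_r$
\[
\Bigl|\Bigl\langle Z_x,\tfrac{\nu}{|\PH\nu|}\Bigr\rangle\Bigr| \leq \varrho_x\Bigl(1+\sum_{i=2}^k i\,c_i\,\varrho_x^{i-1}|\varpi\ci|\Bigr) \leq r\Bigl(1+\sum_{i=2}^k i\,c_i\,\varrho_x^{i-1}|\varpi\ci|\Bigr),
\]
since $\varrho_x \leq r$ throughout $S_r$. Integrating against $|\MS|\,\per$ gives exactly $r\,\mathcal{A}(r)$.

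For the boundary term, I would use the trick of writing $1 = \varrho_x/\varrho_x$ and again bounding $\varrho_x \leq r$ on $\partial S_r$ (which holds on both pieces of $\partial S_r$: on $\partial B_\varrho(x,r)\cap S$ we have equality $\varrho_x = r$, and on $\partial S \cap B_\varrho(x,r)$ we have $\varrho_x < r$). This produces
\[
\left|\int_{\partial S_r}\Bigl\langle Z_x\ot - \tfrac{\langle Z_x\op,\nu\rangle}{|\PH\nu|}\nn\ot,\tfrac{\eta}{|\P\ss\eta|}\Bigr\rangle\,\nis\right| \leq r\int_{\partial S_r}\tfrac{1}{\varrho_x}\Bigl|\Bigl\langle Z_x\ot - \tfrac{\langle Z_x\op,\nu\rangle}{|\PH\nu|}\nn\ot,\tfrac{\eta}{|\P\ss\eta|}\Bigr\rangle\Bigr|\,\nis = r\,\mathcal{B}_0(r).
\]
Combining the two bounds yields the claim $(Q-1)\per(S_r) \leq r(\mathcal{A}(r)+\mathcal{B}_0(r))$.

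The only genuine obstacle is the regularity of the set $S_r$: the level sets of $\varrho_x$ need not be smooth, and the boundary $\partial S_r$ may fail to be piecewise $\cont^1$ for exceptional radii. Nevertheless, by Sard's theorem applied to the restriction of $\varrho_x$ to $S$ (together with the piecewise-$\cont^1$ regularity of $\varrho$ from Definition \ref{2iponhomnor1}(i)), the estimate holds for $\mathcal{L}^1$-a.e. $r>0$, and by a standard approximation/monotonicity argument it then extends to every $r$. No other delicate point arises: the selection of $Z_x$ as the variation vector is precisely what converts the 1st variation formula into a linear isoperimetric inequality, exactly as in the Euclidean setting.
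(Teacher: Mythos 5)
Your proposal is correct and follows essentially the same route as the paper: the paper's proof is literally ``Immediate'' because it rests on the displayed identity $(Q-1)\,\per(S)=-\int_S\MS\langle Z_x,\nu/|\PH\nu|\rangle\,\per+\int_{\partial S}\langle\,\cdot\,,\eta/|\P\ss\eta|\rangle\,\nis$ obtained from the first variation with the Carnot homothety, together with the lemma bounding $\varrho_x^{-1}|\langle Z_x,\nu/|\PH\nu|\rangle|$ and the trivial estimate $\varrho_x\leq r$ on $S_r$ and $\partial S_r$ --- exactly the steps you spell out. Your extra remark about Sard's theorem for exceptional radii is a reasonable precaution that the paper defers to the proof of the monotonicity formula.
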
\begin{proof}Immediate.\end{proof}

\begin{oss}In the sequel we will need
 the following property: \rm there exists
$r_S>0$ such that
\begin{equation}\label{0key} \int_{S_{r+h}\setminus S_r}\frac{1}{\varrho_x}\left|\left\langle
\left( Z_x\ot-\frac{\langle Z_x\op,\nu\rangle}{|\PH\nu|}\nn\ot \right), \grad\ts\varrho_x\right\rangle\right|\,\per \leq  \per(S_{r+h}\setminus S_r)
\end{equation}for $\per$-a.e.  $x\in{\rm Int}\,S$, for
 $\mathcal{L}^1$-a.e. $r,\, h>0$ such that $r+h\leq r_S $. \it 
 In the classical setting the previous inequality easily follows from a key-property of the Euclidean metric $d\Eu$, that is the
  {\rm Ikonal equation} $|\grad\Eu d\Eu|=1$. In fact,  $Z_x(y)=y-x$
and, since $\nn$ coincides with $\nu$, one has $\nn\ot=0$. Thus setting $\varrho_x(y):=d\Eu(x, y)=|x-y|$ yields
\[\frac{\left|\left\langle Z_x(y),
\grad\ts\varrho_x(y)\right\rangle\right|}{\varrho_x(y)}=1-\left\langle\frac{y-x}{|y-x|},\textsl{n}_{\ee}\right\rangle^2\leq
1,\]where $\textsl{n}_{\ee}$ denotes the Euclidean  unit normal of $S$. In particular, we may take $r_S=+\infty$.
A stronger version of \eqref{0key} is a natural assumption in the
Riemannian setting. At this regard we refer the reader to a paper by Chung,
Grigor'jan and Yau  where this hypothesis is the
starting point of a general theory about isoperimetric
inequalities on weighted Riemannian manifolds and graphs; see \cite{CGY}.\end{oss}
 
\noindent It is worth  observing that\footnote{\label{19}We stress that \eqref{0kinawa2} holds true
for every (smooth enough) homogeneous distance
 on any Carnot group $\GG$.
\begin{lemma}\label{cardcaz}Let $\GG$ be a $k$-step Carnot group and let
$\varrho:\GG\times\GG\longrightarrow\R_+$ be any $\cont^1$-smooth
homogeneous norm. Then $\frac{1}{\varrho_x}\langle Z_x,
\grad\,\varrho_x\rangle=1$ for every $x\in\GG$.
\end{lemma}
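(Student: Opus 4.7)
The plan is to reduce to the case $x=0$ by left-invariance, and then recognize the identity as the Euler-type identity associated with the homogeneity of $\varrho$ under Carnot dilations.

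First, observe that the map $L_x:\GG\to\GG$ is an isometry of $(\GG,g)$, and by construction $\delta^x_t = L_x\circ\delta_t\circ L_{x^{-1}}$, so $Z_x(y) = (L_x)_*\,Z_0(x^{-1}\bullet y)$. On the other hand $\varrho_x(y)=\varrho(x^{-1}\bullet y)$, so by the chain rule together with left-invariance of $g$, one has $\grad\,\varrho_x(y)=(L_x)_*\,\grad\,\varrho(x^{-1}\bullet y)$. Since $(L_x)_*$ preserves inner products,
\[
\langle Z_x(y),\grad\,\varrho_x(y)\rangle=\langle Z_0(z),\grad\,\varrho(z)\rangle,\qquad z:=x^{-1}\bullet y,
\]
and $\varrho_x(y)=\varrho(z)$. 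Hence it suffices to prove $\langle Z_0(z),\grad\,\varrho(z)\rangle=\varrho(z)$ for every $z\in\GG\setminus\{0\}$ (where $\varrho$ is $\cont^1$).

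Second, $Z_0$ is by definition the infinitesimal generator at $t=1$ of the one-parameter family of Carnot dilations $\{\delta_t\}_{t>0}$, i.e.\ $\frac{d}{dt}\delta_t z\big|_{t=1}=Z_0(z)$. The homogeneity axiom $\varrho(\delta_t z)=t\,\varrho(z)$ then gives, by differentiating in $t$ at $t=1$,
\[
\varrho(z)=\frac{d}{dt}\Big|_{t=1}\varrho(\delta_t z)=\bigl\langle \grad\,\varrho(z),\tfrac{d}{dt}\delta_t z\big|_{t=1}\bigr\rangle=\langle \grad\,\varrho(z),Z_0(z)\rangle.
\]
Dividing by $\varrho(z)>0$ yields the claim. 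This is really Euler's identity rewritten in the graded setting: the weight of each component $z\ci$ under $\delta_t$ is $i$, which is exactly matched by the coefficient $i$ in $Z_0(z)=z\cc+2\,z\cd+\ldots+k\,z\ck$.

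The argument is essentially a one-line consequence of the homogeneity axiom, so there is no substantive obstacle. The only points requiring (minor) care are that $\varrho$ is $\cont^1$ only away from the origin, so the identity is stated at points $y\ne x$, and that the $L_x$-equivariance of both $Z_x$ and $\grad\,\varrho_x$ must be checked; both follow directly from the constructions of $Z_x$ and from left-invariance of the Riemannian metric.
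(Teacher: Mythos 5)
Your proof is correct and follows essentially the same route as the paper's: both arguments differentiate the homogeneity relation $\varrho(\delta_t z)=t\varrho(z)$ at $t=1$ to obtain the Euler-type identity, the only cosmetic difference being that you first reduce to $x=0$ via the isometry $L_x$, whereas the paper differentiates $t\varrho(x,y)=\varrho(x,x\bullet\delta_t(x^{-1}\bullet y))$ directly, which packages the same left-invariance argument into one line.
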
\begin{proof}By homogeneity and left-invariance of $\varrho$. More
precisely,  we have
$t\varrho(z)=\varrho(\delta_t z)$ for all $t>0$ and for every
$z\in\GG$. Setting $z:=x^{-1}\bullet y$, we get that $t\varrho(x,
y)=t\varrho(z)=\varrho(\delta_t z)=\varrho(x, x\bullet\delta_t z)$
for every $x, y \in\GG$ and for all $t>0$. Hence
$\varrho(x,y)=\frac{d}{dt}\varrho(x,
 x\bullet\delta_t x^{-1}\bullet y)\big|_{t=1}=\langle\grad\,\varrho_x(y),Z_x(y)
\rangle,
$ and  the claim follows.\end{proof}}
\begin{equation}\label{0kinawa2}\frac{1}{\varrho_x}\langle Z_x, \grad\,\varrho_x\rangle=1\end{equation}for every $x\in\GG$.
 The last identity  can be used  to rewrite \eqref{0key}. More precisely, we have
\[\frac{\left|\left\langle\left( Z_x\ot(y)-\frac{\langle Z_x\op(y),\nu\rangle}{|\PH\nu|}\nn\ot \right),
\grad\ts\varrho_x(y)\right\rangle\right|}{\varrho_x(y)}=\left|1-\frac{\left\langle
\grad\cc\,\varrho_x(y),\nn(y)\right\rangle \left\langle Z_x(y),
\frac{\nu(y)}{|\PH\nu|}\right\rangle}{\varrho_x(y)}\right|.\]
Hence \eqref{0key} can be formulated as follows:\\

\noindent $(\spadesuit)$\,\it There exists
$r_S>0$ such that:
\begin{eqnarray}\label{0key2}\int_{S_{r+h}\setminus S_r}\left|1-\frac{\left\langle
\grad\cc\,\varrho_x(y),\nn(y)\right\rangle \left\langle Z_x(y),\left(\nn(y)+\varpi(y)\right) \right\rangle}{\varrho_x(y)}\right|\,\per(y)\leq \per \left(S_{r+h}\setminus S_r\right) 
\end{eqnarray} for $\per$-a.e.  $x\in{\rm Int}\,S$, for
 $\mathcal{L}^1$-a.e. $r,\, h>0$ such that $r+h\leq r_S $.\\\rm

\begin{lemma}[Key result]\label{kr}Let $x \in {\rm Int}(S\setminus C_S)$. Set $$\pi(S_t):=\int_{ S_t}\left|1-\frac{\left\langle
\grad\cc\,\varrho_x(y),\nn(y)\right\rangle \left\langle Z_x(y),\left(\nn(y)+\varpi(y)\right) \right\rangle}{\varrho_x(y)}\right|\,\per(y)$$for $t>0$. Then $$\lim
_{t\rightarrow 0^+}\frac{\pi(S_t)}{\per(S_t)}=1.$$ 
\end{lemma}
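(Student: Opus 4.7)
My plan is to reduce the statement to the blow-up theorem for the horizontal perimeter (Theorem \ref{BUP}, Case (i)) by showing that both $\mu(S_t)/t^{Q-1}$ and $\per(S_t)/t^{Q-1}$ tend to the same metric factor $\kappa_\varrho(\nn(x))$. Using left-invariance of $\per$ and homogeneity of $\varrho$, I first translate $x$ to $0\in\GG$ and rotate the horizontal frame so that $\nn(0)=X_1(0)=\ee_1$. Since $0\in\mathrm{Int}(S\setminus C_S)$, locally $S=\exp\{(\psi(\xi),\xi):\xi\in\ee_1^\perp\}$ with $\psi\in\cont^2$, $\psi(0)=0$ and $\partial\psi/\partial\xi_j(0)=0$ for $j=2,\ldots,\DH$.

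Via the rescaling $y=\delta_t z$ and $\delta_t^\ast\per=t^{Q-1}\per$,
\[
\frac{\mu(S_t)}{t^{Q-1}}=\int_{\delta_{1/t}S\,\cap\,B_\varrho(0,1)} g(\delta_t z)\,\per(z),
\qquad
g(y):=1-\frac{\langle\grad\cc\varrho_0(y),\nn(y)\rangle\,\langle Z_0(y),\nu(y)\rangle}{\varrho_0(y)\,|\PH\nu(y)|}.
\]
The key estimate I would establish is
\[
\frac{|\langle Z_0(\delta_t z),\nu(\delta_t z)\rangle|}{\varrho_0(\delta_t z)}=O(t)\qquad \text{as } t\to 0^+,
\]
uniformly in $z$ in a bounded $\varrho$-region. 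Since $|\grad\cc\varrho_0|\leq 1$ and $|\PH\nu|$ is bounded below near $0$ (because $0\notin C_S$), this forces $g(\delta_t z)\to 1$ uniformly on $\delta_{1/t}S\cap B_\varrho(0,1)$.

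For the key estimate I expand $Z_0(\delta_t z)=t\,z_1+2t^2\,z_2+\cdots+kt^k\,z_k$ in the left-invariant frame at $0$, where $z_i$ denotes the $\HH_i$-component of $z$. The higher-layer terms contribute $O(t^2)$ directly, while the horizontal term satisfies $t\langle z_1,\nu(0)\rangle=t\,|\PH\nu(0)|\,\langle z_1,\nn(0)\rangle$ because $z_1\in\HH$ is orthogonal to $\P\vv\nu(0)$. Parametrising the rescaled surface by $\zeta=\widehat{\delta}_{1/t}\xi\in\ee_1^\perp$, the $\nn(0)$-component of $z_1$ equals $\psi(\widehat{\delta}_t\zeta)/t$; a Taylor expansion of $\psi$ using $\psi(0)=0$, $\partial\psi/\partial\xi_j(0)=0$ for horizontal $j$, together with the graded dilation (which scales vertical variables by $t^{\mathrm{ord}(\alpha)}\geq t^2$), gives $\psi(\widehat{\delta}_t\zeta)=O(t^2)$, and hence $\langle z_1,\nn(0)\rangle=O(t)$. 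Finally $|Z_0(\delta_t z)|\cdot|\nu(\delta_t z)-\nu(0)|=O(t)\cdot O(t)=O(t^2)$ by $\cont^1$ regularity of $\nu$, so $\langle Z_0(\delta_t z),\nu(\delta_t z)\rangle=O(t^2)$, while $\varrho_0(\delta_t z)=t\varrho(z)$ is of order $t$.

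Combining the uniform convergence $g(\delta_t z)\to 1$ with Theorem \ref{BUP} (Hausdorff convergence $\delta_{1/t}S\cap B_\varrho(0,1)\to\mathcal{I}(\nn(0))\cap B_\varrho(0,1)$ and the associated convergence of $\per$) by dominated convergence,
\[
\lim_{t\to 0^+}\frac{\mu(S_t)}{t^{Q-1}}=\per\bigl(\mathcal{I}(\nn(0))\cap B_\varrho(0,1)\bigr)=\kappa_\varrho(\nn(0))=\lim_{t\to 0^+}\frac{\per(S_t)}{t^{Q-1}},
\]
so $\mu(S_t)/\per(S_t)\to 1$. The main obstacle is the uniform $O(t^2)$ control on $\langle Z_0(\delta_t z),\nu(\delta_t z)\rangle$: keeping the exponential graph coordinates of $S$, the anisotropic dilations $\delta_t$, and the Riemannian normal $\nu$ coherent with one another, and verifying uniformity as $z$ ranges over the moving set $\delta_{1/t}S\cap B_\varrho(0,1)$, requires the Taylor bookkeeping sketched above.
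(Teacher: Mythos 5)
Your proof is correct and rests on the same geometric fact as the paper's, but it runs the final step differently. The paper rescales the integral by $\delta^x_{1/t}$, writes $Z_x(\delta^x_t(z))=t([Z_x(z)]\cc+\overrightarrow{O}(t))$, and then passes to the limit using the blow-up convergence $\delta^x_{1/t}S\cap B_\varrho(x,1)\rightarrow\mathcal{I}(\nn(x))\cap B_\varrho(x,1)$ of Theorem \ref{BUP}: the limit integrand involves $\langle z\cc-x\cc,\nn(x)\rangle$, which vanishes identically on the vertical hyperplane $\mathcal{I}(\nn(x))$, whence $L=1$. You instead prove a uniform pointwise estimate showing the integrand equals $1+O(t)$ on all of $S\cap B_\varrho(x,t)$, which gives $\mu(S_t)=\per(S_t)(1+O(t))$ and hence the conclusion \emph{without} any appeal to convergence of the rescaled measures; your concluding invocation of Theorem \ref{BUP} and dominated convergence is therefore redundant (and is in fact the least rigorous part of your write-up, since Hausdorff convergence of sets alone does not give convergence of the integrals). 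Your route buys a rate, $\mu(S_t)/\per(S_t)=1+O(t)$, which is stronger than the bare limit and is what is actually exploited afterwards (Remark \ref{kr1}); the price is the Taylor bookkeeping, and there you should be slightly more careful: as stated, your bound is ``numerator $O(t^2)$, denominator $t\varrho(z)$,'' which is not uniformly $O(t)$ when $z$ approaches the center of the ball. The fix is to phrase every estimate in terms of $\varrho_x(y)$ rather than in terms of $t$ and bounded $\zeta$ separately: the graph expansion gives $|\psi(\xi)|\leq C\varrho_x(y)^2$ (horizontal first derivatives vanish at $0$, vertical coordinates satisfy $|\xi_j|\leq c_{{\rm ord}(j)}\varrho_x(y)^{{\rm ord}(j)}$, and the $\cont^2$ remainder is $O(|\xi|_{\rm Eu}^2)=O(\varrho_x(y)^2)$), and likewise the higher-layer and $\nu(y)-\nu(x)$ corrections are $O(\varrho_x(y)^2)$, so that $|\langle Z_x(y),\nu(y)\rangle|/\varrho_x(y)\leq C\varrho_x(y)\leq Ct$ uniformly on $S_t$. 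With that adjustment your argument is complete.
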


\begin{oss}\label{kr1}
By using standard results about differentiation of measures, it follows from  Lemma \ref{kr} that $ \pi(S_t)=\per(S_t)$ for $\per$-a.e. $x\in S$, for all $t>0$; see, for instance, Theorem 2.9.7 in \cite{FE}. Thus, we get  that
$\pi\left(S_{t+h}\setminus S_t\right)=\per\left(S_{t+h}\setminus S_t\right)$ for $\per$-a.e. $x\in S$ and for every $t, h\geq 0$. \underline{In particular, we may choose} $r_S=+\infty$.
\end{oss}

\begin{proof}[Proof of Lemma \ref{kr}]Let $x \in {\rm Int}(S\setminus C_S)$ and note that $S_t=\vartheta_{t}^{x}\left( \delta_{\frac{1}{t}}^{x} S\cap B_{\varrho}(x, 1)\right)$ for all $t>0$. So we have
\begin{eqnarray*}\frac{ \pi(S_t)}{\per \left(S_t\right)}&=&\frac{\int_{\vartheta_{t}^{x}\left( \delta_{\frac{1}{t}}^{x} S\cap B_{\varrho}(x, 1)\right) }\left|1-\frac{\left\langle
\grad\cc\,\varrho_x(y),\nn(y)\right\rangle \left\langle Z_x(y),\left(\nn(y)+\varpi(y)\right) \right\rangle}{\varrho_x(y)}\right|\,\per(y)}{\per \left(\vartheta_{t}^{x}\left( \delta_{\frac{1}{t}}^{x} S\cap B_{\varrho}(x, 1)\right)\right)} \\&=&\frac{\int_{  \delta_{\frac{1}{t}}^{x} S\cap B_{\varrho}(x, 1) }\left|1-\frac{\left\langle
\grad\cc\,\varrho_x(\delta_{t}^{x}(z)),\nn(\delta_{t}^{x}(z))\right\rangle \left\langle Z_x(\delta_{t}^{x}(z)),
\left(\nn(\delta_{t}^{x}(z))+\varpi(\delta_{t}^{x}(z))\right) \right\rangle}{\varrho_x(\delta_{t}^{x}(z))}\right|\,\per(z)}{\per \left( \delta_{\frac{1}{t}}^{x} S\cap B_{\varrho}(x, 1) \right)} \\&=&\frac{\int_{  \delta_{\frac{1}{t}}^{x} S\cap B_{\varrho}(x, 1)}\left|1-\frac{\left\langle
\grad\cc\,\varrho_x(\delta_{t}^{x}(z)),\nn(\delta_{t}^{x}(z))\right\rangle \left\langle t\left( [Z_x(z)]\cc+ \overrightarrow{O}(t)\right),
\left(\nn(\delta_{t}^{x}(z))+\varpi(\delta_{t}^{x}(z))\right) \right\rangle}{t\varrho_x(z)}\right|\,\per(z)}{\per \left( \delta_{\frac{1}{t}}^{x} S\cap B_{\varrho}(x, 1)\right)} \\&=&\frac{\int_{ \delta_{\frac{1}{t}}^{x} S\cap B_{\varrho}(x, 1)}\left|1-\frac{\left\langle
\grad\cc\,\varrho_x(\delta_{t}^{x}(z)),\nn(\delta_{t}^{x}(z))\right\rangle \left\langle [Z_x(z)]\cc,
 \, \nn(\delta_{t}^{x}(z))\right\rangle}{\varrho_x(z)}\right|\,\per(z)}{\per \left( \delta_{\frac{1}{t}}^{x} S\cap B_{\varrho}(x, 1)\right)} + O(t),
\end{eqnarray*}as long as $t\rightarrow 0^+$.
It is worth observing that the horizontal gradient of $\varrho_x$ turns out to be homogeneous of degree $0$, and so independent of $t$. Note  that $[Z_x(z)]\cc=\PH(Z_x)(z)=z\cc-x\cc$. Therefore
\[\lim_{t\rightarrow 0^+}\frac{ \pi(S_t)}{\per \left(S_t\right)}=\frac{\int_{S_{\infty}\cap B_{\varrho}(x, 1)  }\left|1-\frac{\left\langle
\grad\cc\,\varrho_x(z),\nn(x)\right\rangle \left\langle \left(z\cc-x\cc\right),
 \, \nn(x)\right\rangle}{\varrho_x(z)}\right|\,\per(z)}{\per \left(S_{\infty}\cap B_{\varrho}(x, 1) \right)} =:L.
\]Recall that, by Theorem \ref{BUP}, we have $S_{\infty}=\mathcal{I}(\nn(x))$. This implies that $\left\langle \left(z\cc-x\cc\right),
 \, \nn(x)\right\rangle=0$ whenever $z\in \mathcal{I}(\nn(x))$ and hence $L=1$, as wished.
\end{proof}

At this point, starting from Proposition \ref{correctdimin}, we may
 prove a monotonicity formula for the $\HH$-perimeter
 $\per$, which is one of our main results. We shall set ${S}_t:={S}\cap
{B_\varrho}(x,t)$, for $t>0$.

\begin{teo}[The monotonicity inequality for the measure $\per$]\label{rmonin}  Let ${S}\subset\GG$ be a
compact hypersurface of class $\cont^2$ with (piecewise) $\cont^1$ boundary $\partial S$. Then for every $x\in {\rm Int}(S\setminus C_S)$ the
following ordinary differential inequality
holds\begin{eqnarray}\label{rmytn}-\frac{d}{dt}\frac{\per({S}_t)}{t^{Q-1}}
\leq \frac{\mathcal{A}(t)+{\mathcal{B}}_2(t)}{t^{Q-1}}
\end{eqnarray}for $\mathcal{L}^1$-a.e. $t>0$.
\end{teo}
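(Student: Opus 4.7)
The plan is to combine the linear isoperimetric inequality of Proposition \ref{correctdimin} with the Coarea Formula (Theorem \ref{TCOAR}) applied to the radial function $\varrho_x$, and then to exploit the key identity of Lemma \ref{kr} (plus Remark \ref{kr1}) to convert the ``outer'' boundary term into a derivative of $\per(S_t)$. First, starting from Proposition \ref{correctdimin} and writing $\partial S_t=(\partial S\cap B_\varrho(x,t))\cup (\partial B_\varrho(x,t)\cap S)$ (up to a negligible set) yields $\mathcal{B}_0(t)=\mathcal{B}_1(t)+\mathcal{B}_2(t)$, so that
\[(Q-1)\,\per(S_t)\leq t\bigl(\mathcal{A}(t)+\mathcal{B}_1(t)+\mathcal{B}_2(t)\bigr).\]
Therefore, it is enough to identify $\mathcal{B}_1(t)$ with $\dfrac{d}{dt}\per(S_t)$ almost everywhere.

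Next, I would simplify the integrand defining $\mathcal{B}_1(t)$. On $\partial B_\varrho(x,t)\cap S$ the outward unit normal to $S_t$ is $\eta=\grad\ts\varrho_x/|\grad\ts\varrho_x|$, which gives $\eta/|\P\ss\eta|=\grad\ts\varrho_x/|\grad\ss\varrho_x|$. A direct algebraic manipulation, using $\nn\op=|\PH\nu|\nu$, $\nu/|\PH\nu|=\nn+\varpi$ and Lemma \ref{cardcaz} (i.e.\ $\langle Z_x,\grad\varrho_x\rangle=\varrho_x$), shows that
\[\frac{1}{\varrho_x}\left\langle Z_x\ot-\frac{\langle Z_x\op,\nu\rangle}{|\PH\nu|}\nn\ot,\frac{\eta}{|\P\ss\eta|}\right\rangle=\frac{1}{|\grad\ss\varrho_x|}\left(1-\frac{\langle\grad\cc\varrho_x,\nn\rangle\,\langle Z_x,\nn+\varpi\rangle}{\varrho_x}\right),\]
precisely the integrand appearing in the identity preceding $(\spadesuit)$. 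Hence, writing $F(y):=\left|1-\frac{\langle\grad\cc\varrho_x,\nn\rangle\langle Z_x,\nn+\varpi\rangle}{\varrho_x}\right|$,
\[\mathcal{B}_1(t)=\int_{\partial B_\varrho(x,t)\cap S}\frac{F(y)}{|\grad\ss\varrho_x(y)|}\,\nis(y).\]

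Now I would apply Theorem \ref{TCOAR} with $\varphi=\varrho_x$ and $\psi=F/|\grad\ss\varrho_x|$ (noting that $F\in L^\infty$ and $\psi\in L^1(S;\per)$ since $x\in{\rm Int}(S\setminus C_S)$ and the integrand is bounded away from the characteristic set). The Coarea Formula gives
\[\int_{S_t} F(y)\,\per(y)=\int_{0}^{t}\!ds\int_{\varphi^{-1}[s]\cap S}\frac{F}{|\grad\ss\varrho_x|}\,\nis,\]
so that, differentiating in $t$,
\[\mathcal{B}_1(t)=\frac{d}{dt}\int_{S_t}F(y)\,\per(y)=\frac{d\mu(S_t)}{dt}\qquad\text{for $\mathcal{L}^1$-a.e. }t>0,\]
where $\mu$ is the measure of Lemma \ref{kr}. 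By Remark \ref{kr1}, $\mu(S_t)=\per(S_t)$ for every $t>0$ (for $\per$-a.e.\ $x\in S$, which certainly includes $\per$-a.e.\ $x\in{\rm Int}(S\setminus C_S)$), hence $\mathcal{B}_1(t)=\frac{d}{dt}\per(S_t)$ for $\mathcal{L}^1$-a.e.\ $t>0$.

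Finally, substituting back into the linear inequality and dividing by $t^Q$, one obtains
\[\frac{(Q-1)\per(S_t)}{t^Q}-\frac{1}{t^{Q-1}}\frac{d}{dt}\per(S_t)\leq\frac{\mathcal{A}(t)+\mathcal{B}_2(t)}{t^{Q-1}}.\]
Since the left-hand side equals $-\frac{d}{dt}\bigl(\per(S_t)/t^{Q-1}\bigr)$, the monotonicity inequality \eqref{rmytn} follows. The only delicate point is the identification $\mathcal{B}_1(t)=\frac{d}{dt}\per(S_t)$: it rests on Lemma \ref{kr}, whose proof already uses the non-characteristic blow-up of Theorem \ref{BUP}, Case (i), and on the passage from the asymptotic density ratio $\mu(S_t)/\per(S_t)\to 1$ to the pointwise equality $\mu(S_t)=\per(S_t)$ via standard differentiation of measures (Theorem 2.9.7 in \cite{FE}), which is the main technical obstacle.
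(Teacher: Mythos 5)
Your proposal is correct and follows essentially the same route as the paper: linear inequality, splitting $\mathcal{B}_0$ into $\mathcal{B}_1+\mathcal{B}_2$, and identifying $\mathcal{B}_1(t)$ with $\frac{d}{dt}\per(S_t)$ via the Coarea Formula together with Lemma \ref{kr} and Remark \ref{kr1}. The only cosmetic difference is that you differentiate $\mu(S_t)=\int_{S_t}F\,\per$ directly, while the paper integrates $\mathcal{B}_1$ over $[t,t+h]$ and then divides by $h$; these are the same computation.
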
\begin{proof}By
applying Sard's Theorem we get that ${S}_t$ is a 
manifold of class $\cont^2$ with boundary for $\mathcal{L}^1$-a.e. $t>0$. From the
inequality in
 Proposition \ref{correctdimin}  we have
\[(Q-1)\,\per({S}_t)\leq
t\left(\mathcal{A}(t)+\mathcal{B}_0(t)\right)\]for
$\mathcal{L}^1$-a.e. $t>0$, where $t$ is the radius of a
$\varrho$-ball centered at $x\in {\rm Int}\,S$. Since
$$\partial{S}_t=\{\partial
B_\varrho(x,t)\cap {S}\}\cup\{\partial{S}\cap B_\varrho(x,t)\},$$
we get that
\begin{eqnarray*}(Q-1)\,\per({S}_t)\leq t\,\left(\mathcal{A}(t) +
\mathcal{B}_1(t)+\mathcal{B}_2(t)\right).\end{eqnarray*}We estimate
$\mathcal{B}_1(t)$ by using \eqref{0key} and Coarea Formula. For
every $t, h>0$ one has
\begin{eqnarray*}\int_{t}^{{t+h}}\mathcal{B}_1(s)\,ds&=&\int_{t}^{{t+h}}
\int_{\partial B_\varrho(x,s)\cap
{S}}\frac{1}{\varrho_{x}}\left|\left\langle \left( Z_x\ot-\frac{\langle Z_x\op,\nu\rangle}{|\PH\nu|}\nn\ot \right),
\frac{\eta}{|\P\ss\eta|}\right\rangle\right|\nis \\&=&
\int_{S_{t+h}\setminus S_t}\frac{1}{\varrho_{x}}\left|\left\langle
\left( Z_x\ot-\frac{\langle Z_x\op,\nu\rangle}{|\PH\nu|}\nn\ot \right), \frac{\grad\ts\varrho_{x}}{|\qq\varrho_{x}
|}\right\rangle\right||\qq
\varrho_x|\,\per\\&=&\int_{S_{t+h}\setminus
S_t}\per,\end{eqnarray*}where we have used the following facts:
\begin{itemize}
\item $\eta=\frac{\grad\ts{\varrho_x}}{|\grad\ts{\varrho_x}|}$ and
$\eta\ss=\frac{\qq\varrho_x}{|\qq\varrho_x|}$ along $\partial
B_\varrho(x,s)\cap {S}$  for $\mathcal{L}^1$-a.e. $s\in]t,
t+h[$;\item Coarea
formula \eqref{1coar} together with Lemma \ref{kr} and Remark \ref{kr1}.\end{itemize}Therefore $$\frac{\int_{t}^{{t+h}}\mathcal{B}_1(s)\,ds}{h}=
\frac{\per(S_{t+h}\setminus S_t)}{h} $$as long as $h\rightarrow
0^+$ and hence $\mathcal{B}_1(t)=\frac{d}{dt}\,\per({S}_t) $
for $\mathcal{L}^1$-a.e. $t>0$. So we get that
\[(Q-1)\,\per({S}_t)\leq t\left(\mathcal{A}(t)+{\mathcal{B}}_2(t)+\frac{d}{dt}\,\per({S}_t)\right)\]which
is equivalent to \eqref{rmytn}.
\end{proof}

\subsection{Further estimates}\label{perdindirindina}
In this section we study the integrals
$\mathcal{A}(t)$ and $\mathcal{B}_2(t)$  appearing in the
right-hand side of the monotonicity formula
\eqref{rmytn}.\\

\noindent{\bf \large Estimate of $\mathcal{A}(t)$.}
\begin{lemma}\label{perdopo}Let $S\subset\GG$ be a 
hypersurface of class $\cont^k$, let $x\in {\rm Int}\,S$ and let $S_t=S\cap B_{\varrho}(x,
t)$ for some $t>0$. Then there exists a constant ${\bf b}_\varrho>0$, only
dependent on $\varrho$ and $\GG$, such that
\begin{eqnarray}\label{margheritacarosio}
\lim_{t\rightarrow
0^+}\frac{{\int_{{S}_t}|\varpi\ci|\,\per}}{t^{Q-i}}\leq
\DH_i\,{\bf b}_\varrho\qquad\mbox{for every }\,\, i=2,..., k\end{eqnarray}where $\DH_i=\dim\HH_i$.
\end{lemma}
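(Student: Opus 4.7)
The strategy is to reduce the integrand to the intrinsic Riemannian density $|\P\ci\nu|\,\sigma^{n-1}\rr$ and then combine the $(Q-i)$-homogeneity of the left-invariant $(n-1)$-forms $\psi_\alpha := X_\alpha\LL\sigma^n\rr$, $\alpha\in I\ci$, with the blow-up procedure of Theorem \ref{BUP}. First I rewrite the integrand: since $\varpi_\alpha = \langle X_\alpha,\nu\rangle/|\PH\nu|$ and $\per = |\PH\nu|\,\sigma^{n-1}\rr$, one has
\[
|\varpi\ci|\,\per = |\P\ci\nu|\,\sigma^{n-1}\rr,
\]
and this identity extends continuously across $C_S$. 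The elementary inequality $\|v\|_2\leq\|v\|_1$ in $\R^{\DH_i}$ then gives $|\P\ci\nu|\leq \sum_{\alpha\in I\ci}|\langle X_\alpha,\nu\rangle|$, so it is enough to show, for each fixed $\alpha\in I\ci$, that
\[
\limsup_{t\to 0^+}\frac{1}{t^{Q-i}}\int_{S_t}|\langle X_\alpha,\nu\rangle|\,\sigma^{n-1}\rr\leq b_\varrho,
\]
with $b_\varrho>0$ depending only on $\varrho$ and $\GG$; summation over $I\ci$ then produces the factor $\DH_i$.

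The key is that $\psi_\alpha|_S = \langle X_\alpha,\nu\rangle\,\sigma^{n-1}\rr$ and, because $(\delta_{1/t})_*X_\alpha = t^{-i}X_\alpha$ and $\delta_t^*\sigma^n\rr = t^Q\sigma^n\rr$, the form $\psi_\alpha$ satisfies the homogeneity rule $\delta_t^*\psi_\alpha = t^{Q-i}\psi_\alpha$. Combining this with the left-invariance of $\psi_\alpha$ and setting $\widetilde S := x^{-1}\bullet S$, the change of variables yields
\[
\int_{S_t}|\langle X_\alpha,\nu\rangle|\,\sigma^{n-1}\rr = \int_{\widetilde S\cap B_\varrho(0,t)}|\psi_\alpha| = t^{Q-i}\int_{\delta_{1/t}\widetilde S\cap B_\varrho(0,1)}|\psi_\alpha|.
\]
At this point I apply Theorem \ref{BUP}: as $t\to 0^+$ the rescaled hypersurface $\delta_{1/t}\widetilde S$ Hausdorff-converges to a limit hypersurface $S_\infty$, so the limsup of the normalized integral is controlled by $\int_{S_\infty\cap B_\varrho(0,1)}|\psi_\alpha|$, and it remains to bound the latter uniformly.

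If $x\in{\rm Int}(S\setminus C_S)$ then $S_\infty = \mathcal{I}(\nn(x))$ is a vertical hyperplane whose outward unit normal is the horizontal vector $\nn(x)$; since $X_\alpha\in\HH_i$ with $i\geq 2$ is orthogonal to $\HH$, we have $\langle X_\alpha,\nn(x)\rangle=0$ and the limit integral vanishes. If $x\in{\rm Int}(S\cap C_S)$ then by Theorem \ref{BUP}, Case (ii), $S_\infty$ is a polynomial graph in some vertical direction $X_\beta$. Here I use the pointwise inequality $|\psi_\alpha|\leq \sigma^{n-1}\rr$ (which follows from $|X_\alpha|=1$) together with the ball-box inclusion $B_\varrho(0,1)\subset{\rm Box}(0,R)$ of Remark \ref{boundonmetricfactor}: inside the unit $\varrho$-ball the graph constraint $|\widetilde\psi(\zeta)|\leq c_i$ forces the parameter domain of the graph to shrink in inverse proportion to the growth of $|\grad\widetilde\psi|$, so the Euclidean $(n-1)$-area of $S_\infty\cap B_\varrho(0,1)$ remains uniformly bounded.

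The main obstacle is precisely this uniform bound in the characteristic case, where the Taylor coefficients of $\widetilde\psi$ can be arbitrarily large. The quantitative ingredient is that $\widetilde\psi$ is a weighted-homogeneous polynomial of weight exactly $i\leq k$, so its Euclidean degree is at most $k$; a Bezout-type estimate then bounds the multiplicity of the Euclidean projection along any $X_\alpha$-flow line through $B_\varrho(0,1)$ by $k$, and the area formula yields
\[
\int_{S_\infty\cap B_\varrho(0,1)}|\psi_\alpha|\leq k\cdot C(\varrho,\GG).
\]
Taking $b_\varrho$ as the resulting universal constant and summing over $\alpha\in I\ci$ gives the required bound $\DH_i\,b_\varrho$.
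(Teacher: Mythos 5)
Your argument follows essentially the same route as the paper's: rewrite $|\varpi\ci|\,\per=|\P\ci\nu|\,\sigma^{n-1}\rr$, dominate it by $\sum_{{\rm ord}(\alpha)=i}|X_\alpha\LL\sigma^n\rr|$, exploit the $(Q-i)$-homogeneity and left-invariance of these forms to rescale, and invoke the blow-up Theorem \ref{BUP} to reduce everything to a uniform bound on the limit sets (vertical hyperplanes, respectively polynomial graphs of homogeneous order $\leq k$). The only divergence is in the final step, where the paper simply sets $b_\varrho:=\max\{b_1,b_2\}$ with $b_2$ a supremum of Riemannian areas over the class $\mathcal{P}ol^k_0$, whereas you note that the non-characteristic limit actually vanishes and sketch a projection--multiplicity (Bezout-type) justification for the finiteness of the constant in the characteristic case --- a point the paper's proof leaves implicit.
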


\begin{proof} For any
$\alpha=\DH+1,...,n$, we have
$\left(X_\alpha\LL\Vol\right)\big|_S= \langle X_\alpha, \nu\rangle\,\sigma^{n-1}\rr \big|_S=
 \ast\omega_\alpha |_S$, where $\ast$ denotes the Hodge star
operator; see \cite{Helgason}. Moreover
$\delta_t^\ast(\ast\omega_\alpha)=t^{Q-{\rm
ord}(\alpha)}(\ast\omega_\alpha)$ for every $t>0$. So we get that
$${\int_{{S}_t}|\varpi\ci|\,\per}=\int_{{S}_t}|\P\ci\nu|\,\sigma^{n-1}\rr \leq \sum_{{\rm ord}(\alpha)=i}\int_{S_t} |X_\alpha\LL\Vol|=
\sum_{{\rm ord}(\alpha)=i}t^{Q-i}\int_{\delta^x_{\frac{1}{t}}S\cap
B_\varrho(x,
1)}\left|(\ast\omega_\alpha)\circ\delta^x_t\right|.$$Since
\[\int_{\delta^x_{\frac{1}{t}}S\cap
B_\varrho(x,
1)}\left|(\ast\omega_\alpha)\circ\delta^x_t\right|\leq
\sigma^{n-1}\rr\left(\delta^x_{\frac{1}{t}}S\cap B_\varrho(x,
1)\right),\]by using Theorem \ref{BUP} we may pass to the limit as
$t\rightarrow 0^+$ the right-hand side. More precisely, if $x\in
{\rm Int}(S\setminus C_S)$ the rescaled hypersurfaces
$\delta^x_{\frac{1}{t}}S$ converge to the vertical hyperplane
$\mathcal{I}(\nn(x))$ as $t\rightarrow 0^+$. Otherwise, $x\in {\rm Int}(S\cap C_S)$ and we can assume that ${\rm ord}(x)=Q-i$, for
some $i=2,..., k$. We also recall that the limit-set $S_\infty$ is a polynomial
hypersurface of homogeneous degree  $i$ passing through $x$; see
Remark \ref{fraccicarla}.  So let
us
set $b_1:=\sup_{X\in\HH,\,|X|=1}\sigma^{n-1}\rr(\mathcal{I}(X)\cap
B_\varrho(0, 1))$, where $\mathcal{I}(X)$ denotes the
vertical hyperplane through $0\in\GG$ and orthogonal to $X$.
In order to study the characteristic case, let $b_2:=\sup_{\Psi\in\mathcal{P}ol^{k}_0}\sigma^{n-1}\rr(\Psi\cap
B_\varrho(0, 1))$, where $\mathcal{P}ol^{k}_0$ denotes
the class of all graphs of polynomial functions of homogeneous degree $\leq k$, passing through
$0\in\GG$.  Using the left-invariance of
$\sigma^{n-1}\rr$ and setting
\begin{equation}\label{tonda3}{b}_\varrho:=\max\{b_1,\,b_2\},\end{equation} yields
$\lim_{t\rightarrow0^+}\sigma^{n-1}\rr\left(\delta^x_{\frac{1}{t}}S\cap B_\varrho(x,
1)\right)\leq {b}_\varrho$. Therefore
\begin{eqnarray*}
\lim_{t\rightarrow0^+}\frac{{\int_{{S}_t}|\varpi\ci|\,\per}}{t^{Q-i}}\leq\lim_{t\rightarrow0^+} \DH_i\,
\sigma^{n-1}\rr\left(\delta^x_{\frac{1}{t}}S\cap B_\varrho(x,
1)\right)\leq \DH_i\, {b}_\varrho\end{eqnarray*} which achieves
the proof of \eqref{margheritacarosio}.
\end{proof}

\begin{oss}If $S$ is
just of class $\cont^2$, then \eqref{margheritacarosio} holds
for every $x\in {\rm Int}(S\setminus C_S)$. The same assertion  holds if $x\in C_S$ has order
${\rm ord}(x)=Q-i$ for some $i=2,...,k$ and $S$ is of class
$\cont^i$.
\end{oss}

\indent Let $S\subset\GG$ be of class $\cont^2$, let $x\in{\rm
Int}(S\setminus C_S)$ and  $S_t=S\cap B_\varrho(x,
t)$. Moreover, let $\mathcal{A}(t)$ be as in Definition
\ref{lsd34}. By  applying Theorem \ref{baloghteo}, we get that
$\dim_{\rm Eu-Hau}(C_{ S})\leq n-2$. In particular $\sigma\rr^{n-1}$-a.e. interior point of $S$ is non-characteristic.

\begin{lemma}\label{crux00}Under the previous assumptions, one has
\begin{equation}\label{cruxii}\mathcal{A}(t)\leq \int_{S_t}|\MS|\,\per.\end{equation}
\end{lemma}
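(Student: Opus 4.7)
The strategy is to decompose $\mathcal{A}(t)$ into its leading piece and a remainder, and then show the remainder is controlled by the leading piece. Write
\[
\mathcal{A}(t)=\int_{S_t}|\MS|\,\per+R(t),\qquad R(t):=\sum_{i=2}^k ic_i\int_{S_t}|\MS|\,\varrho_x^{i-1}\,|\varpi\ci|\,\per.
\]
Since on $S_t$ one has $\varrho_x(y)\le t$, the factor $\varrho_x^{i-1}$ may be pulled out as $t^{i-1}$, reducing the task to bounding each $\int_{S_t}|\MS|\,|\varpi\ci|\,\per$ in terms of $\int_{S_t}|\MS|\,\per$.

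The main tool is Lemma \ref{perdopo}, which provides the density estimate
\[
\int_{S_t}|\varpi\ci|\,\per\le \DH_i\,b_\varrho\,t^{Q-i}(1+o(1))\qquad\text{as }t\to 0^+,
\]
for each $i=2,\ldots,k$, together with Theorem \ref{BUP} (Case (i)) giving $\per(S_t)=\kappa_\varrho(\nn(x))\,t^{Q-1}(1+o(1))$. Because $x\in{\rm Int}(S\setminus C_S)$, the quantity $|\P\cc\nu|$ is bounded away from zero in a neighborhood of $x$, so the bound \eqref{inA} rewrites as $1+\sum_{i\ge 2}ic_i\varrho_x^{i-1}|\varpi\ci|\le 1+O(t/|\P\cc\nu(x)|)$ uniformly on $S_t$ for $t$ sufficiently small. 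Combining this with the asymptotic non-degeneracy $\int_{S_t}|\MS|\,\per\sim|\MS(x)|\,\kappa_\varrho(\nn(x))\,t^{Q-1}$ (wherever $|\MS(x)|>0$), one obtains $R(t)\le C\,t\int_{S_t}|\MS|\,\per$, so that the correction is strictly subordinate.

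The hard part is passing from this subordination to the \emph{exact} inequality stated in \eqref{cruxii}, without a $(1+o(1))$ factor. I would do this by exploiting the standard differentiation-of-measures framework of Federer (Theorem 2.9.7 of \cite{FE}), in the same spirit in which it was used in Remark \ref{kr1}: the estimate on $R(t)$ shows that the Radon--Nikodym derivative of the measure $|\MS|(\sum_i ic_i\varrho_x^{i-1}|\varpi\ci|)\per$ with respect to $|\MS|\,\per$ vanishes at $\sigma\rr^{n-1}$-a.e.\ interior point $x\in{\rm Int}(S\setminus C_S)$ (which, by Theorem \ref{baloghteo}, is a full-measure subset of ${\rm Int}\,S$). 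Integrating the resulting pointwise a.e.\ vanishing against the density bounds above and passing to the representative inequality for $t$ in the regime where the correction is already absorbed, the remainder $R(t)$ drops out and \eqref{cruxii} follows.

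The crux is precisely this last step: the inequality is not a uniform pointwise-in-$t$ statement about the raw integrand, but rather is obtained after the density-theoretic reduction to $t\to 0^+$ and the use of Theorem \ref{baloghteo} to discard the characteristic set. All other ingredients (control of $\varrho_x^{i-1}$, Lemma \ref{perdopo}, and local boundedness of $|\MS|$ at non-characteristic interior points) are essentially routine given what has been established earlier in the paper.
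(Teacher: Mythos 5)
Your opening reduction is exactly the paper's: the paper bounds the factor $1+\sum_{i=2}^k i\,c_i\varrho_x^{i-1}|\varpi\ci|$ pointwise by $1+O\bigl(\varrho_x/|\PH\nu|\bigr)$ near the non-characteristic point $x$ (this is \eqref{inA} and the continuity of $1/|\PH\nu|$ and of $\MS$ there), so that the remainder $R(t)$ is subordinate to the main term; your detour through Lemma \ref{perdopo} is harmless but unnecessary for this, since $|\varpi\ci|\leq 1/|\PH\nu|$ is locally bounded. The genuine gap is in your final step. The Radon--Nikodym derivative of the measure $|\MS|\bigl(\sum_i i\,c_i\varrho_x^{i-1}|\varpi\ci|\bigr)\per$ with respect to $|\MS|\,\per$ is the function $y\mapsto\sum_i i\,c_i\varrho_x^{i-1}(y)|\varpi\ci(y)|$, which vanishes only at $y=x$ (and wherever $\varpi$ vanishes), not $\sigma\rr^{n-1}$-a.e.; it is a density in $y$ for a \emph{fixed} base point $x$, so there is no a.e.-in-$x$ vanishing statement to integrate, and Theorem \ref{baloghteo} plays no role in removing the remainder (in the paper it only ensures that the hypothesis $x\in{\rm Int}(S\setminus C_S)$ is satisfied at $\sigma\rr^{n-1}$-a.e.\ interior point). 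What Theorem 2.9.7 of \cite{FE} actually gives -- and what the paper's proof uses -- is the density-limit at the single fixed point $x$: since the extra factor is continuous near $x$ and vanishes at $y=x$, the ratio of $R(t)$ to $\per(S_t)$ tends to $0$ as $t\rightarrow 0^+$.

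Moreover, no argument can upgrade this to the exact inequality at a fixed $t>0$: by the very definition of $\mathcal{A}(t)$ one has $\mathcal{A}(t)\geq\int_{S_t}|\MS|\,\per$, with strict inequality whenever the remainder integrand is not a.e.\ zero on $S_t$, so \eqref{cruxii} can only be read in the asymptotic sense $\mathcal{A}(t)\leq\bigl(1+o(1)\bigr)\int_{S_t}|\MS|\,\per$ (equivalently, $R(t)=o(\per(S_t))$) as $t\rightarrow 0^+$ -- which is precisely what the paper's proof establishes and what is consumed downstream in the monotonicity and asymptotic estimates. So your instinct that the removal of the $(1+o(1))$ factor is the crux was sound, but the mechanism you propose (a.e.\ vanishing of a Radon--Nikodym derivative over the non-characteristic set) is not a valid repair; the correct reading is that the lemma is a statement about the density limit at the fixed point $x$. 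A minor further point: your restriction to points where $|\MS(x)|>0$ is unneeded, since normalizing by $\per(S_t)$, as the paper does, handles the case $\MS(x)=0$ automatically.
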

\begin{proof}First, note that $\varrho_x(y)=\varrho(x, y)\rightarrow 0^+$
as $t\rightarrow 0^+$. Hence
\begin{eqnarray*}\mathcal{A}(t)=\int_{S_t}|\MS|\left(1+\sum_{i=2}^k
i\,{\bf c}_i\varrho_x^{i-1}|\varpi\ci|\right)\,\per\leq
\int_{S_t}|\MS|\left(1+\frac{2
{\bf c}_2\varrho_x\left(1 + o(1)\right)}{|\PH\nu|}\right)\,\per
\end{eqnarray*}as long as $t\rightarrow
0^+$. Note that $\frac{1}{|\PH\nu|}$ is continuous near $x\in{\rm
Int}(S\setminus C_S)$. Since $\MS$ turns out to be continuous near
every non-characteristic point,  by using standard differentiation
results in Measure Theory (see Theorem 2.9.7 in \cite{FE}),  we get that
\[\lim_{t\rightarrow
0^+}\frac{ \int_{S_t}|\MS|\left(\frac{2 {\bf c}_2 \varrho_x\left(1 +
o(1)\right)}{|\PH\nu|}\right) \per }{\per(S_t)}=0 \] and \eqref{cruxii}  
follows.\end{proof}

Actually, a similar result holds true even if $x\in {\rm
Int}(S\cap C_S)$, at least whenever $\MS$ is bounded and $S$ is smooth enough near
$C_S$. Below we shall make  use of Theorem
\ref{BUP}, Case (ii).

\begin{lemma}\label{cux0}Let $S$ be a hypersurface of class $\cont^k$ and assume that $\MS$ is bounded on $S$. Let
$x\in {\rm Int}(S\cap C_S)$ be an interior  characteristic point
such that ${\rm ord}(x)=Q-i$, for some $i=2,...,k$. This means that
there exists $\alpha=\DH+1,...,n$, ${\rm ord}(\alpha)=i$, such
that $S$ can be represented, locally around $x$, as a
$X_\alpha$-graph for which
\eqref{0dercond} holds. Then there exists a constant
${\bf d}_\varrho>0$, only dependent on $\varrho$ and $\GG$, such that
$\mathcal{A}(t)\leq
\|\MS\|_{L^\infty(S)}\,\,\left(\kappa_\varrho(C_S(x))+
{\bf d}_\varrho\right)\,t^{Q-1}$ as long as $t\rightarrow
0^+$. In particular, we have$$\mathcal{A}(t)\leq \|\MS\|_{L^\infty(S)}\,\left(\kappa_\varrho(C_S(x))+
{\bf d}_\varrho\right)\mathcal{S}_{\varrho}^{Q-1}(S_t)$$for all $t>0$, where $\mathcal{S}_{\varrho}^{Q-1}$ denotes the  
  spherical Hausdorff measure computed with respect to the  the homogeneous distance $\varrho$.
\end{lemma}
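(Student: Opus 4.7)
The plan is to split $\mathcal{A}(t)$ by pulling $\|\MS\|_{L^\infty(S)}$ outside the integral, and then handle the two resulting pieces independently via the blow-up theorem (Theorem \ref{BUP}, Case (ii)) for the $\per$-part and via Lemma \ref{perdopo} for the correction terms. Concretely, since $|\MS|\leq\|\MS\|_{L^\infty(S)}$ holds $\per$-a.e.\ on $S$, one gets immediately
\[
\mathcal{A}(t)\leq\|\MS\|_{L^\infty(S)}\left(\per(S_t)+\sum_{i=2}^{k}i\,c_i\int_{S_t}\varrho_x^{i-1}|\varpi\ci|\,\per\right).
\]

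For the first summand $\per(S_t)$, I would apply Theorem \ref{BUP}, Case (ii): the hypothesis that $x\in{\rm Int}(S\cap C_S)$ has order $Q-i$ and the graph condition \eqref{0dercond} is in force give precisely
\[
\per(S_t)\sim\kappa_\varrho(C_S(x))\,t^{Q-1}\qquad\text{as }t\to 0^+,
\]
which contributes the $\kappa_\varrho(C_S(x))$-term to the final bound.

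For each summand in the sum, the key pointwise estimate is that $\varrho_x(y)\leq t$ on $S_t$, hence $\varrho_x^{i-1}\leq t^{i-1}$ pointwise on $S_t$. Pulling $t^{i-1}$ out and invoking Lemma \ref{perdopo} (which is applicable at $x$ because $S$ is of class $\cont^k$ and the characteristic order is $Q-i$ with $i\leq k$) yields
\[
\int_{S_t}\varrho_x^{i-1}|\varpi\ci|\,\per\leq t^{i-1}\int_{S_t}|\varpi\ci|\,\per\leq \bigl(\DH_i\,b_\varrho+o(1)\bigr)\,t^{Q-1}
\]
as $t\to 0^+$, with $b_\varrho$ the constant produced in \eqref{tonda3}. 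Summing over $i=2,\ldots,k$ and setting
\[
d_\varrho:=\sum_{i=2}^{k}i\,c_i\,\DH_i\,b_\varrho,
\]
which clearly depends only on $\varrho$ and $\GG$, gives the asymptotic first statement $\mathcal{A}(t)\leq\|\MS\|_{L^\infty(S)}\bigl(\kappa_\varrho(C_S(x))+d_\varrho\bigr)\,t^{Q-1}$ as $t\to 0^+$.

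The main obstacle will be upgrading this asymptotic inequality to the global bound ``for all $t>0$'' in terms of $\mathcal{S}_\varrho^{Q-1}(S_t)$ instead of $t^{Q-1}$. For this I would combine two ingredients: the equivalence $\per=\kappa_\varrho(\nn)\,\mathcal{S}_\varrho^{Q-1}$ with metric factors uniformly bounded above and below by constants $K_1,K_2$ (Remark \ref{boundonmetricfactor}), and the homogeneity/compactness of $S$, which together ensure that the ratio $\mathcal{S}_\varrho^{Q-1}(S_t)/t^{Q-1}$ is bounded below by a positive constant depending only on $\varrho,\GG$ for every $t\in(0,\mathrm{diam}_\varrho S]$. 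Absorbing this constant into $d_\varrho$ (enlarging it if necessary) converts the $t^{Q-1}$ estimate into the stated $\mathcal{S}_\varrho^{Q-1}(S_t)$ estimate, valid for all $t>0$ with the same structural constants.
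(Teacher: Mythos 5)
Your treatment of the main asymptotic estimate coincides with the paper's: the splitting $\mathcal{A}(t)\leq\|\MS\|_{L^\infty(S)}\bigl(\per(S_t)+\sum_{i=2}^k i\,c_i\int_{S_t}\varrho_x^{i-1}|\varpi\ci|\,\per\bigr)$, the pointwise bound $\varrho_x\leq t$ on $S_t$ followed by Lemma \ref{perdopo}, the blow-up of Theorem \ref{BUP}, Case (ii), for $\per(S_t)$, and the choice $d_\varrho=\sum_{i=2}^k i\,c_i\,\DH_i\,b_\varrho$ are exactly what the paper does (the paper's two-line proof compresses all of this into one display). So the first conclusion, $\mathcal{A}(t)\leq\|\MS\|_{L^\infty(S)}(\kappa_\varrho(C_S(x))+d_\varrho)\,t^{Q-1}$ as $t\to 0^+$, is fine.

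The gap is in your last step. The paper obtains the ``for all $t>0$'' inequality in terms of $\mathcal{S}_\varrho^{Q-1}(S_t)$ from a density-comparison theorem (Federer, Theorem 2.10.17), i.e.\ by passing from an upper bound on the \emph{density} of the measure $\mathcal{A}$ to a bound by the spherical Hausdorff measure. Your substitute --- lower-bounding $\mathcal{S}_\varrho^{Q-1}(S_t)/t^{Q-1}$ by a constant depending only on $(\varrho,\GG)$ and absorbing it into $d_\varrho$ --- fails on two counts. First, your bound on $\mathcal{A}(t)$ is only asymptotic as $t\to 0^+$, so even granting a comparison between $t^{Q-1}$ and $\mathcal{S}_\varrho^{Q-1}(S_t)$ you would only recover the conclusion for small $t$, not for all $t>0$; the density-theorem route does not have this problem because it works directly with the limsup of density quotients at each point. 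Second, no uniform positive lower bound on $\mathcal{S}_\varrho^{Q-1}(S_t)/t^{Q-1}$ depending only on $(\varrho,\GG)$ is available here: the point $x$ is \emph{characteristic}, the uniform two-sided bounds $K_1\leq k_\varrho(\nn)\leq K_2$ of Remark \ref{boundonmetricfactor} are established only on ${\rm Int}(S\setminus C_S)$, and the actual density $\lim_{t\to 0^+}\per(S_t)/t^{Q-1}=\kappa_\varrho(C_S(x))$ at a characteristic point depends on $S$ and on $x$ and can even vanish. Any constant you extract this way would depend on $S$, so ``absorbing it into $d_\varrho$'' would destroy the requirement that $d_\varrho$ depend only on $\varrho$ and $\GG$ (and would also change the constant, whereas the statement keeps the same one in both inequalities). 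You need the Federer-type density estimate, not a two-measure comparison of the balls.
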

\begin{proof}Using Lemma \ref{perdopo} we obtain \begin{eqnarray*}\frac{\sum_{i=2}^k\int_{S_t}
i\,{\bf c}_i\varrho_x^{i-1}|\varpi\ci|\,\per}{t^{Q-1}}&\leq&{\sum_{i=2}^k
i\,{\bf c}_i\DH_i\,{\bf b}_\varrho}
\end{eqnarray*}as $t\rightarrow 0^+$,
 where ${\bf b}_\varrho$ is the constant defined by \eqref{tonda3}. Finally, the thesis follows by setting ${\bf d}_\varrho:={\sum_{i=2}^k
i\,{\bf c}_i\DH_i {\bf b}_\varrho}$ and  by using a well-known density estimate; see  Theorem 2.10.17 in \cite{FE}.
\end{proof}
 The previous result will be applied only in Section \ref{asintper}.\\

 \noindent{\bf \large Estimate of
${\mathcal{B}}_2(t)$.}\\

We  define a family of (homogeneous) measures on $(n-2)$-dimensional submanifolds of $\GG$.
\begin{Defi}\label{xazax} Let $N\subset\GG$ be a $(n-2)$-dimensional submanifold of class
$\cont^1$. Let $\xi\in\XX^0(\nn N)$ be a unit horizontal normal vector field  to $N$ and let $\alpha\in I\vv=\{h+1,...,n\}$ be such that ${\rm ord}(\alpha)=i$. Then we  define a  $(Q-i-1)$-homogeneous measure  $\mu_\alpha\equiv\mu_\alpha(\xi)$ on $N$ by setting$$\mu_\alpha(N\cap B):=  \int_{N\cap B}|(\xi\wedge X_\alpha)\LL\Vol|\qquad \forall \,\,B\in\mathcal{B}or(\GG).$$
\end{Defi}Note that  $|(\xi\wedge X_\alpha)\LL\Vol|$ is the norm of the differential $(n-2)$-form  $(\xi\wedge X_\alpha)\LL\Vol\big|_N$; see \cite{FE}, pp. 31-32. Now let $\underline{\tau}=\{\tau_1,...,\tau_n\}$ be a \it graded orthonormal frame \rm on an open neighborhood of $N\subset \GG$. This means that $\{\tau_{j_i}: j_i\in I\ci\}$ is a orthonormal basis of the $i$-th layer $\HH_i$ for any $i=1,...,k$.  Furthermore, let us denote by $\underline{\phi}=\{\phi_1,...,\phi_n\}$ the \it dual coframe \rm of  $\underline{\tau}$ defined by duality as   $\phi_i(\tau_j)=\delta_{i}^{j}$, where $\delta_{i}^{j}$ is the 
 {\it Kronecker delta}, that is,  $\delta_{i}^{j}=1$ if $i=j$ and $0$ otherwise. For simplicity, we also assume that $\tau_1=\xi$ where $\xi\in\XX^0(\nn N)$ is a unit horizontal normal vector field  to $N$. Using these new coordinates, it follows that $\xi\wedge X_\alpha=\tau_1\wedge\tau_\alpha$ and  we  get that $(\xi\wedge X_\alpha)\LL\Vol\big|_N=\ast \phi_1\wedge\phi_\alpha\big|_N$, where $\ast$ denotes the \it Hodge star operator \rm on differential forms; see \cite{Lee}, \cite{Helgason}. Clearly, the homogeneity degree (or \it weight\rm ; see \cite{P4}) of the differential $(n-2)$-form $\ast \phi_1\wedge\phi_\alpha\big|_N$ is given by $Q-({\rm ord}(\alpha)+1)=Q-i-1$. 

\begin{lemma}\label{417}
 Let  $x\in {\rm Int}(S\setminus C_S)$. Then \begin{eqnarray*}{\mathcal{B}}_2(t)&\leq&\nis(\partial
S\cap B_\varrho(x, t))+ \sum_{i=2}^k i\,{\bf c}_i\, t^{i-1} \sum_{j_i\in I\ci}\int_{\partial
S\cap B_\varrho(x, t)}|\nn\wedge X_{j_i} \LL\Vol| \end{eqnarray*}for every $t>0$; see Definition \ref{2iponhomnor1}. 
\end{lemma}

\begin{no}\label{nuota1}
For the sake of brevity, hereafter we shall adopt the notation introduced in Definition \ref{xazax}. In particular, we assume that $\xi=\nn$, where $\nn$ is the horizontal unit normal to $\overline{S}=S\cup\partial S$. We shall set  $\mu_{j_i}(\partial S\cap B):=\int_{\partial S\cap B}|\nn\wedge X_{j_i} \LL\Vol|$ for all $B\in\mathcal{B}or(\GG)$ and $$\mu(x, t):=\sum_{i=2}^k i\,{\bf c}_i\, t^{i-1} \sum_{j_i\in I\ci}\mu_{j_i}(\partial S\cap B_\varrho(x, t))\qquad \forall\,\, x\in {\rm Int}(S\setminus C_S)\quad \forall\,\,t>0.$$
\end{no}
\begin{proof}[Proof of Lemma \ref{417}] Let us set
\begin{eqnarray*} \widetilde{{\mathcal B}_2}(t)&:=&\int_{\partial
S\cap B_\varrho(x, t)}
 \left|\left\langle \frac{\left([Z_x]\vv-{\langle [Z_x]\vv ,\varpi\rangle}\nn\right)\ot}{\varrho_x}, \chi\right\rangle\right|\nis.\end{eqnarray*}Furthermore, let $x\in  {\rm Int}(S\setminus C_S)$ be fixed and let $f:\partial S\setminus C_{\partial S}\longrightarrow\R_+$ be defined as$$f(y):=\frac{1}{\varrho_x(y)}\left|\left\langle
\left( Z_x\ot(y)-\frac{\langle Z_x\op(y),\nu(y)\rangle}{|\PH\nu(y)|}\nn\ot(y) \right),\frac{\eta(y)}{|\P\ss\eta(y)|}\right\rangle\right|.$$Then \begin{eqnarray*}f  &\equiv&\frac{1}{\varrho_x}\left|\left\langle
 \left( Z_x -\frac{\langle Z_x\op,\nu\rangle}{|\PH\nu|}\nn\right)\ot  ,\left(\eta\ss+\chi\right)\right\rangle\right|\\&=&\frac{1}{\varrho_x}\left|\left\langle
 \left( [Z_x]\cc -\frac{\langle [Z_x]\cc\op,\nu\rangle}{|\PH\nu|}\nn+ \left([Z_x]\vv-\frac{\langle [Z_x]\vv ,\nu \rangle}{|\PH\nu |}\nn\right)\right)\ot  ,\left(\eta\ss+\chi\right)\right\rangle\right|\\&=&\frac{1}{\varrho_x}\left|\left\langle
 \left( [Z_x]\cc -\frac{\langle [Z_x]\cc\op,\nu\rangle}{|\PH\nu|}\nn+ \left([Z_x]\vv- {\langle [Z_x]\vv ,\varpi\rangle} \nn\right)\right)\ot  ,\left(\eta\ss+\chi\right)\right\rangle\right|\\&=&\frac{1}{\varrho_x}\left|\left\langle
 \left( [Z_x]\cc - {\langle [Z_x]\cc\op,\nn\rangle}\nn+ \left([Z_x]\vv-{\langle [Z_x]\vv ,\varpi\rangle}\nn\right)\right)\ot  ,\left(\eta\ss+\chi\right)\right\rangle\right|\\&=&\frac{1}{\varrho_x}\left|\left\langle
 \left( [Z\ss]_x +\left([Z_x]\vv-{\langle [Z_x]\vv ,\varpi\rangle}\nn\right)\right)\ot  ,\left(\eta\ss+\chi\right)\right\rangle\right|\\ &=&\frac{|\langle[Z\ss]_x,\eta\ss\rangle|}{\varrho_x} +  \left|
 \left\langle \frac{\left([Z_x]\vv-{\langle [Z_x]\vv ,\varpi\rangle}\nn\right)\ot}{\varrho_x}, \chi\right\rangle\right|\\&\leq& 1+\left|\left\langle \frac{\left([Z_x]\vv-{\langle [Z_x]\vv ,\varpi\rangle}\nn\right)\ot}{\varrho_x}, \chi\right\rangle\right|.\end{eqnarray*}So we have shown that 
$${\mathcal{B}}_2(t) \leq \nis(\partial
S\cap B_\varrho(x, t))+\widetilde{{\mathcal B}_2}(t)\qquad\forall\,\,t>0.$$
Note that $[Z_x]\vv=\sum_{i=2}^k  \sum_{j_i\in I\ci}\langle[Z_x]\vv, X_{j_i}\rangle X_{j_i}$. Using the assumptions on  $\varrho$  we   get that $\left|\frac{\langle[Z_x]\vv, X_{j_i}\rangle}{\varrho_x}\right|\leq \textbf{c}_i\varrho_x^{i-1}$; see Definition \ref{2iponhomnor1}. Therefore
\begin{eqnarray*} \widetilde{{\mathcal B}_2}(t)&\leq&\sum_{i=2}^k i\,{\bf c}_i\,t^{i-1}\sum_{j_i\in I\ci}\int_{\partial
S\cap B_\varrho(x, t)}
 \left|\left\langle {\left(X_{j_i}-{\langle X_{j_i} ,\varpi\rangle}\nn\right)\ot}, \chi\right\rangle\right|\nis\\&=&\sum_{i=2}^k i\,{\bf c}_i\,t^{i-1}\sum_{j_i\in I\ci}\int_{\partial
S\cap B_\varrho(x, t)}
 \left|\left\langle \left( |\PH \nu|X_{j_i}-\nu_{j_i}\nn\right) , \P\vs\eta\right\rangle\right|\sigma\rr^{n-2}. \end{eqnarray*} Now let $\underline{\tau}$ be an adapted moving frame to $S$; see  Definition \ref{movadafr}.  
Using this frame we  get that
\begin{eqnarray*}\int_{\partial
S\cap B_\varrho(x, t)}
 \left|\left\langle \left( |\PH \nu|X_{j_i}-\nu_{j_i}\nn\right) , \P\vs\eta\right\rangle\right|\sigma\rr^{n-2}&=&\int_{\partial
S\cap B_\varrho(x, t)}
 \left|\nu_1\eta_{j_i}-\nu_{j_i}\eta_1\right|\sigma\rr^{n-2}\\&=&\int_{\partial
S\cap B_\varrho(x, t)}
 \left|\tau_1\wedge \tau_{j_i}\LL\Vol\right|\\&=&\int_{\partial
S\cap B_\varrho(x, t)}|\nn\wedge X_{j_i} \LL\Vol|.\end{eqnarray*}The thesis easily follows.
\end{proof}

\subsection{Proof of  the Isoperimetric Inequality}\label{isopineq1}

By applying the results of Section \ref{perdindirindina} together with
Theorem \ref{rmonin} we get the following  version of the
monotonicity inequality:
\begin{corollario} \label{rmonin2}Let ${S}\subset\GG$ be a  
hypersurface of class $\cont^2$ with (piecewise) $\cont^1$ boundary $\partial S$.
Then, for every
$x\in {\rm Int}(S\setminus C_S)$  we have
\begin{eqnarray}\label{rmytn2}-\frac{d}{dt}\frac{\per({S}_t)}{t^{Q-1}}
\leq\frac{1}{t^{Q-1}} \left(\int_{S_t}|\MS|\,\per
+\nis(\partial S\cap B_\varrho(x, t))+ \mu(x, t) \right)
\end{eqnarray}for $\mathcal{L}^1$-a.e. $t>0$; see Notation \ref{nuota1}.
\end{corollario}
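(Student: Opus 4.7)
The proof is a direct synthesis of Theorem \ref{rmonin} with the two bounds on $\mathcal{A}(t)$ and $\mathcal{B}_2(t)$ obtained in Section \ref{perdindirindina}. First I would recall that Theorem \ref{rmonin} already gives, for every $x\in {\rm Int}(S\setminus C_S)$ and $\mathcal{L}^1$-a.e. $t>0$, the preliminary monotonicity inequality
$$-\frac{d}{dt}\frac{\per({S}_t)}{t^{Q-1}}\leq \frac{\mathcal{A}(t)+\mathcal{B}_2(t)}{t^{Q-1}}.$$
Thus the task reduces to inserting into this right-hand side the two estimates already proved in the previous subsection.

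Next, I would apply Lemma \ref{crux00}, which controls the curvature integral by $\mathcal{A}(t)\leq\int_{S_t}|\MS|\,\per$; this accounts for the first term on the right-hand side of \eqref{rmytn2}. For the boundary integral, Lemma \ref{417} yields
$$\mathcal{B}_2(t)\leq \nis(\partial S\cap B_\varrho(x,t))+C_{dim}\,t\,\sigma\rr^{n-2}(\partial S\cap B_\varrho(x,t))$$
for every $t\leq 1$, with $C_{dim}=2\sum_{i=2}^k c_i$ as in \eqref{cdim111}. Substituting both bounds into the preliminary monotonicity inequality above produces \eqref{rmytn2} at once, with $C=C_{dim}$ depending only on $\varrho$ and $\GG$.

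The only point that deserves a separate remark is the source of the restriction $t\in\,]0,1]$ appearing in the statement: it is inherited entirely from Lemma \ref{417}, whose estimate $|Z\vv|\leq C\varrho_x$ was obtained via assumption (iii) of Definition \ref{2iponhomnor1} and requires $\varrho_x(y)\leq 1$. Beyond this bookkeeping, no genuine obstacle is anticipated—the corollary is essentially a mechanical assembly of Theorem \ref{rmonin} with Lemmas \ref{crux00} and \ref{417}, and no new analytic input is required.
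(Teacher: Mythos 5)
Your proposal matches the paper's own proof, which simply cites Theorem \ref{rmonin}, Lemma \ref{crux00} and Lemma \ref{417} and substitutes the two estimates into the preliminary monotonicity inequality. Your additional remark correctly identifying the restriction $t\in\,]0,1]$ as inherited from Lemma \ref{417} is accurate and consistent with the paper.
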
 
\begin{proof}The proof  follows by applying Theorem
\ref{rmonin}, Lemma \ref{crux00} and Lemma \ref{417}. 
\end{proof} 

\begin{no}Let $x\in {\rm Int}(S\setminus C_S)$. Henceforth, we shall set
\[\mathcal{D}(t):= \int_{S_t}|\MS|\,\per
+\nis(\partial S\cap B_\varrho(x, t))+ \mu(x, t) \qquad \forall\,\,t>0.\]\end{no}

\begin{lemma}\label{lem}Let ${S}\subset\GG$ be a  
hypersurface of class $\cont^2$ with (piecewise) $\cont^1$ boundary $\partial S$.  Let $x\in {\rm Int}\,(S\setminus C_S)$ and let \begin{equation}r_0(x):= 
2\left(\frac{\per({S})}{k_\varrho(\nn(x))}\right)^{{1}/{Q-1}}.\end{equation} 
For every $\lambda\geq 2$ there exists $r\in ]0, r_0(x)]$
such that\begin{eqnarray*}\label{condlem}\per({S}_{\lambda r})\leq
\lambda^{Q-1}\,r_0(x)\,\mathcal{D}(r).\end{eqnarray*}\end{lemma}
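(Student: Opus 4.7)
The plan is to argue by contradiction, combining the monotonicity formula of Corollary \ref{rmonin2} with the density limit provided by Theorem \ref{BUP}, Case (i). Suppose, on the contrary, that the reverse strict inequality
\[
\per(S_{\lambda r}) > \lambda^{Q-1}\, r_0(x)\, \mathcal{D}(r)
\]
holds for every $r\in\,]0, r_0(x)]$. Setting $\Theta(t) := \per(S_t)/t^{Q-1}$, this hypothesis rewrites as $\Theta(\lambda r) > r_0(x)\,\mathcal{D}(r)/r^{Q-1}$, and along the geometric sequence $r_k := r_0(x)/\lambda^k$ (so that $\lambda r_k = r_{k-1}$) it specializes to $\mathcal{D}(r_k) < \Theta(r_{k-1})\,r_k^{Q-1}/r_0(x)$ for every $k\geq 1$.

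Since $x\in\mathrm{Int}(S\setminus C_S)$, Theorem \ref{BUP} will give $\Theta(r_k)\to k_\varrho(\nn(x))$ as $k\to\infty$. On the other hand, integrating Corollary \ref{rmonin2} between $r_k$ and $r_{k-1}$ and using the monotonicity of $\mathcal{D}$ in its argument together with the elementary bound $\int_{r_k}^{r_{k-1}} t^{-(Q-1)}\,dt \leq (\lambda-1)/r_k^{Q-2}$ yields
\[
\Theta(r_k) \leq \Theta(r_{k-1}) + (\lambda-1)\,\frac{\mathcal{D}(r_{k-1})}{r_k^{Q-2}}.
\]
For $k\geq 2$, substituting the contradicting bound on $\mathcal{D}(r_{k-1})$ into this estimate produces a Fibonacci-type linear recursion of the form $\Theta(r_k) \leq \Theta(r_{k-1}) + \gamma_\lambda\,\lambda^{-k}\,\Theta(r_{k-2})$, where $\gamma_\lambda$ depends only on $\lambda$ and $Q$.

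The recursion will telescope. Since $\sum_k \lambda^{-k}<\infty$, a standard summation will yield a uniform bound $\Theta(r_k)\leq K_{\lambda,Q}\,\Theta(r_0)$ valid for every $k\geq 0$. Letting $k\to\infty$ and invoking the density limit will give $k_\varrho(\nn(x)) \leq K_{\lambda,Q}\,\Theta(r_0)\leq K_{\lambda,Q}\,\per(S)/r_0(x)^{Q-1}$. Now the very definition of $r_0(x)$ forces $k_\varrho(\nn(x))\,r_0(x)^{Q-1}\leq 2^{Q-1}\per(S)$, and this has to be played off against the initial $k=1$ step of the recursion—where the contradicting hypothesis cannot be iterated backwards and must instead be estimated directly via the linear isoperimetric inequality of Proposition \ref{correctdimin} applied at scale $r_0(x)$—to close the contradiction. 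The hard part will be this delicate bookkeeping of the telescoping: producing constants $K_{\lambda,Q}$ that, after absorbing the $k=1$ contribution, are strictly compatible with the calibration implicit in $r_0(x)$, so that the chain of inequalities becomes strict and genuinely incompatible with the density limit.
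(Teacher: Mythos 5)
There is a genuine gap: your plan never actually produces the contradiction, and the discretized recursion you set up is quantitatively too lossy to do so. The paper's proof (which follows the same broad contradiction strategy) works because the hypothesis $\per(S_{\lambda t})>\lambda^{Q-1}r_0(x)\,\mathcal{D}(t)$ is used \emph{pointwise in $t$} under the integral: writing $\mathcal{D}(t)\,t^{-(Q-1)}<\per(S_{\lambda t})/(\lambda t)^{Q-1}\cdot r_0(x)^{-1}$ and substituting $s=\lambda t$, one gets
\[
\int_0^{r_0(x)}\mathcal{D}(t)\,t^{-(Q-1)}\,dt\;\le\;\frac{\beta}{\lambda}+\frac{\lambda-1}{\lambda}\,\frac{\per(S)}{r_0(x)^{Q-1}},
\qquad \beta:=\sup_{r\in]0,r_0(x)]}\frac{\per(S_r)}{r^{Q-1}},
\]
so that the self-referential inequality $\beta\le \per(S)/r_0(x)^{Q-1}+\beta/\lambda+\cdots$ carries the coefficient $1/\lambda<1$ in front of $\beta$; combined with $k_\varrho(\nn(x))\le\beta$ and the calibration $\per(S)/r_0(x)^{Q-1}\le k_\varrho(\nn(x))/2^{Q-1}$ this yields the purely numerical absurdity $\lambda\le 7/6$. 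In your version, freezing $\mathcal{D}$ at the right endpoint of each $\lambda$-adic interval and bounding $\int_{r_k}^{r_{k-1}}t^{-(Q-1)}dt$ by $(\lambda-1)r_k^{-(Q-2)}$ costs a factor of order $\lambda^{Q-1}$ per interval; summing your recursion $\Theta(r_k)\le\Theta(r_{k-1})+(\lambda-1)\lambda^{Q-1-k}\Theta(r_{k-2})$ gives $\sup_k\Theta(r_k)\le(\text{boundary terms})+\lambda^{Q-2}\sup_k\Theta(r_k)$, and $\lambda^{Q-2}\ge 4$ for every admissible $\lambda\ge2$, $Q\ge4$, so the inequality is vacuous. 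Equivalently, your multiplicative constant $K_{\lambda,Q}=\prod_{j\ge2}\bigl(1+(\lambda-1)\lambda^{Q-1-j}\bigr)$ already exceeds $2^{Q-1}$ (for $\lambda=2$, $Q=4$ it is $\approx 14>8$), so the inequality $k_\varrho(\nn(x))\le K_{\lambda,Q}\,k_\varrho(\nn(x))/2^{Q-1}$ you would reach is simply true, not absurd.

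A second, independent problem is the logic of your closing step. The two inequalities you propose to ``play off'' against each other, namely $k_\varrho(\nn(x))\,r_0(x)^{Q-1}\le K_{\lambda,Q}\,\per(S)$ and $k_\varrho(\nn(x))\,r_0(x)^{Q-1}\le 2^{Q-1}\per(S)$, point in the \emph{same} direction, so no contradiction can come from comparing them; what is needed is an upper bound for $k_\varrho(\nn(x))$ by a multiple \emph{strictly smaller than} $2^{Q-1}$ of $\per(S)/r_0(x)^{Q-1}$, and your constants do not supply one. The final paragraph of your plan, which defers exactly this point to ``delicate bookkeeping,'' is where the proof actually lives. To repair the argument you should abandon the interval-by-interval freezing of $\mathcal{D}$, work with the supremum $\beta$ over \emph{all} radii in $]0,r_0(x)]$ (not just the $\lambda$-adic ones, since the blow-up limit only controls $\liminf$ along your subsequence a priori), and reproduce the change-of-variables estimate above so that the coefficient $1/\lambda$ survives.
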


Due to Remark \ref{boundonmetricfactor}, the number $r_0(x)>0$ can be globally estimated from above and below.

\begin{no}\label{klkl}We  set   ${\bf r}(S):=\sup_{x\in {\rm Int}(S\setminus C_S)} r_0(x)$.
\end{no}

\begin{proof}[Proof of Lemma \ref{lem}]Fix $r\in]0,r_0(x)]$ and note that $\per({S}_t)$
is a monotone non-decreasing function of $t$ on $]r,r_0(x)]$. We start from the identity
$$\per({S}_t)/t^{Q-1}=\left(\per({S}_t)-\per\left({S}_{r_0(x)}\right)\right)/t^{Q-1}+\per\left({S}_{r_0(x)}\right)/t^{Q-1}.$$The first addend is an  increasing  function of $t$, while
the second one is an absolutely continuous function of $t$.
Therefore, by integrating the differential inequality
\eqref{rmytn}, we get that

\begin{equation}\label{ppp1}\frac{\per({S}_{r})}{r^{Q-1}}
\leq\frac{\per\left({S}_{r_0(x)}\right)}{\left(r_0(x)\right)^{Q-1}}+\int_r^{r_0(x)}\mathcal{D}(t)\,{t^{-(Q-1)}}dt.\end{equation}Therefore

\begin{eqnarray*}\beta&:=&\sup_{r\in]0,r_0(x)]}\frac{\per({S}_{r})}{r^{Q-1}}
\leq\frac{\per\left({S}_{r_0(x)}\right)}{\left(r_0(x)\right)^{Q-1}}+
\int_0^{r_0(x)}\mathcal{D}(t)\,{t^{-(Q-1)}}dt.\end{eqnarray*} Now
we argue by contradiction. If the lemma is false, it follows that
for every $r\in]0,r_0(x)]$\begin{equation*}\per({S}_{\lambda
r})>\lambda^{Q-1}r_0(x)\,\mathcal{D}(t).\end{equation*}From the
last inequality we infer that
\begin{eqnarray*}\int_0^{r_0(x)}\mathcal{D}(t)
\,{t^{-(Q-1)}}dt&\leq&\frac{1}{\lambda^{Q-1}r_0(x)}\int_{0}^{r_0(x)}\per({S}_{\lambda
t})\,t^{-(Q-1)}dt\\&=&\frac{1}{\lambda\,r_0(x)}\int_{0}^{\lambda
r_0(x)}\per({S}_{s})\,s^{-(Q-1)}ds\\&=&\frac{1}{\lambda\,
r_0(x)}\int_{0}^{r_0(x)}\per({S}_{s})\,s^{-(Q-1)}ds+
\frac{1}{\lambda r_0(x)}\int_{r_0(x)}^{\lambda
r_0(x)}\per({S}_{s})\,s^{-(Q-1)}ds\\&\leq
&\frac{\beta}{\lambda}+\frac{\lambda-1}{\lambda}\frac{\per({S})}{\left(r_0(x)\right)^{Q-1}}.\end{eqnarray*}Therefore,
using \eqref{ppp1} yields
$$\beta\leq\frac{\per\left({S}_{r_0(x)}\right)}{\left(r_0(x)\right)^{Q-1}}+
 \frac{\beta}{\lambda}+\frac{\lambda-1}{\lambda}\frac{\per({S})}{\left(r_0(x)\right)^{Q-1}}$$and so
$$\frac{\lambda-1}{\lambda}\beta\leq\frac{2\lambda-1}{\lambda}\left(\frac{\per({S})}{\left(r_0(x)\right)^{Q-1}}\right)
\leq\frac{2\lambda-1}
{\lambda}\left(\frac{k_\varrho(\nn(x))}{2^{Q-1}}\right).$$By its
own definition, one has
$$k_\varrho(\nn(x))=\lim_{r\searrow
0^+}\frac{\per({S}_r)}{r^{Q-1}}\leq\beta.$$Furthermore,
since\footnote{Indeed, the first non-abelian Carnot group is the
Heisenberg group $\mathbb{H}^1$ for which $Q=4$.} $Q-1\geq 3$, we get
that $\lambda-1\leq \frac{2\lambda-1}{8},$ or equivalently
$\lambda\leq\frac{7}{6}$, which contradicts the hypothesis
$\lambda\geq 2$.

\end{proof}

 The next covering lemma is well-known and can be found in \cite{BuZa}; see also \cite{FE}.
\begin{lemma}[Vitali's Covering Lemma]\label{cov}Let $(X,\varrho)$ be a compact metric space and
let $A\subseteq X$. Moreover, let ${\textsf{C}}$ be a covering of
$A$ by closed $\varrho$-balls with centers in $A$. We also assume
that each point $x$ of $A$ is the center of at least one closed
$\varrho$-ball belonging to $\textsf{C}$ and that the radii of the
balls of the covering ${\textsf{C}}$ are uniformly bounded by some
positive constant. Then, for every $\lambda> 2$ there exists a no
more than countable subset
${\textsf{C}}_\lambda\subsetneq{\textsf{C}}$ of pairwise
non-intersecting closed balls
$\overline{B}_\varrho(x_l,r_l),\,l\in \mathbb{N},$ such that
$$A\subset\bigcup_{l\in \mathbb{N}}{B}_\varrho(x_l,\lambda\,r_l).$$\end{lemma}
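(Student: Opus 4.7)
The plan is to build $\textsf{C}_\lambda$ by a greedy ``biggest-ball-first'' selection organized into generations of comparable radii, and then to deduce the covering inclusion from the triangle inequality together with the generation structure. This is the classical Vitali construction in a metric space; the hypothesis $\lambda>2$ will enter only through the geometric tuning of the generations.

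Let $R:=\sup\{r:\overline{B}_\varrho(x,r)\in\textsf{C}\}<\infty$, which is finite by hypothesis. Fix any $q\in(1,\lambda-1]$; this interval is nonempty precisely because $\lambda>2$. Partition $\textsf{C}$ into generations
\[
\textsf{G}_n:=\big\{\overline{B}_\varrho(x,r)\in\textsf{C}\ :\ Rq^{-n}<r\leq Rq^{-n+1}\big\},\qquad n\geq 1,
\]
so that $\textsf{C}=\bigcup_{n}\textsf{G}_n$. I then construct pairwise disjoint families $\textsf{F}_n\subseteq\textsf{G}_n$ inductively: let $\textsf{F}_1$ be a maximal (under set-inclusion) pairwise disjoint subfamily of $\textsf{G}_1$ and, having selected $\textsf{F}_1,\dots,\textsf{F}_{n-1}$, let $\textsf{F}_n$ be a maximal pairwise disjoint subfamily of those $B\in\textsf{G}_n$ which are also disjoint from every member of $\textsf{F}_1\cup\cdots\cup\textsf{F}_{n-1}$. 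Zorn's lemma guarantees the existence of each maximal family. Set $\textsf{C}_\lambda:=\bigcup_{n\geq 1}\textsf{F}_n$; by construction its members are pairwise non-intersecting.

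Countability will follow from compactness: within each $\textsf{F}_n$ the centers are pairwise at $\varrho$-distance strictly greater than $2Rq^{-n}$, so total boundedness of $(X,\varrho)$ forces each $\textsf{F}_n$ to be finite, hence $\textsf{C}_\lambda$ at most countable. For the covering property, fix $x\in A$ and pick, by hypothesis, some ball $\overline{B}_\varrho(x,r)\in\textsf{C}$; it lies in a unique generation $\textsf{G}_n$. If $\overline{B}_\varrho(x,r)\in\textsf{F}_n$, then $x\in B_\varrho(x,\lambda r)$ trivially. Otherwise, by the maximality of $\textsf{F}_n$, the ball $\overline{B}_\varrho(x,r)$ meets some $\overline{B}_\varrho(x_k,r_k)\in\textsf{F}_1\cup\cdots\cup\textsf{F}_n$; the generation constraint forces $r_k>Rq^{-n}\geq r/q$, i.e.\ $r<qr_k$. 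Picking any point in the intersection and applying the triangle inequality yields
\[
\varrho(x,x_k)\leq r+r_k<(q+1)\,r_k\leq\lambda\,r_k,
\]
so $x\in B_\varrho(x_k,\lambda r_k)$, as required.

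The main subtlety lies in the generation bookkeeping: when a ball $B\in\textsf{G}_n$ fails to be in $\textsf{F}_n$, the obstructing selected ball must already belong to $\textsf{F}_1\cup\cdots\cup\textsf{F}_n$ (never to a later generation that has not yet been examined), and it is precisely this restriction that yields the size comparison $r_k>r/q$ needed in the last display. This is what permits the sharp enlargement factor $\lambda$ arbitrarily close to $2$. No doubling or measure-theoretic structure is required, so the argument proceeds in the plain compact metric setting as stated.
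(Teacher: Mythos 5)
Your proof is correct, and it is essentially the classical argument that the paper does not reproduce: for this lemma the paper gives no proof at all, only the citation to Burago--Zalgaller \cite{BuZa} (see also \cite{FE}), and the proof found there is the same greedy selection by generations of comparable radii, with the hypothesis $\lambda>2$ entering exactly as in your choice $q\in(1,\lambda-1]$. Your maximality bookkeeping is right: a ball of $\textsf{G}_n$ rejected from $\textsf{F}_n$ must meet a selected ball of $\textsf{F}_1\cup\cdots\cup\textsf{F}_n$ (never a later generation), whence $r_k>Rq^{-n}\geq r/q$, and the estimate $\varrho(x,x_k)\leq r+r_k<(q+1)\,r_k\leq\lambda\,r_k$ uses only the valid direction of the triangle inequality, so no hidden Euclidean assumption remains.

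One small inaccuracy should be repaired in the countability step. You claim that disjointness of two closed balls of $\textsf{F}_n$ forces their centers to be at distance greater than $2Rq^{-n}$; in a general metric space this is false, since $\varrho(x,y)\leq r+s$ does \emph{not} imply that $\overline{B}_\varrho(x,r)$ and $\overline{B}_\varrho(y,s)$ intersect (consider a two-point space with the two radii slightly larger than half the distance). What disjointness does give is $\varrho(x,y)>\max(r,s)>Rq^{-n}$: if $\varrho(x,y)\leq\max(r,s)$, one center would lie in both balls. This weaker separation is all that total boundedness of the compact space $(X,\varrho)$ needs, so each $\textsf{F}_n$ is still finite and $\textsf{C}_\lambda$ is at most countable; the conclusion stands after this one-line correction.
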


We are now in a position to prove our main result.

\begin{proof}[Proof of Theorem \ref{ghaioio}]We  apply Lemma \ref{lem}. So  let
$\lambda>2$ and, for every $x\in{\rm Int}({S}\setminus
C_{{S}})$, let $r(x)\in]0, {\bf r}(S)]$ be such
that\begin{equation}\label{fifiin}\per({S}_{r(x)})\leq\lambda^{Q-1}\,{\bf r}(S)\,
 \mathcal{D}(r(x)).\end{equation}Let
 $\textsf{C}=\left\{\overline{B_\varrho}(x,r(x)): x\in
 {\rm Int}(S\setminus C_S)\right\}$ be a covering of  
 ${S}$. By Lemma \ref{cov}, there
 exists a non more than countable subset $\textsf{C}_\lambda\subsetneq\textsf{C}$ of pairwise
non-intersecting closed balls $\overline{B}_\varrho(x_k,r_k)$, $k\in \mathbb{N}$, such that
$${S}\setminus
C_{S}\subset\bigcup_{l\in\mathbb{N}}{B}_\varrho(x_l,\lambda\,r_l),$$
where we have set $r_l:=r(x_l)$. We
therefore get
\begin{eqnarray*}\per({S})&\leq&\sum_{l\in\mathbb{N}}\per({S}\cap
B_\varrho(x_l,\lambda\,r_l))\\&\leq&
\lambda^{Q-1}\,{\bf r}(S)\sum_{l\in\mathbb{N}}\,\mathcal{D}(r_l)\qquad\,\,\,
\mbox{(by \eqref{fifiin})}
\\&=&\lambda^{Q-1}\,{\bf r}(S)\,\sum_{l\in\mathbb{N}}  \left(\int_{S_{r_l}}|\MS|\,\per
+\nis(\partial S\cap B_\varrho(x_l, {r_l})) + \mu(x_l, r_l)  \right) \\
\\&\leq&\lambda^{Q-1}\,{\bf r}(S)\, \left( \int_S |\MS|\,\per +\nis(\partial S) +\sum_{l\in\mathbb{N}}  \mu(x_l, r_l)\right). 
\end{eqnarray*} 
By letting $\lambda\searrow 2$, we get that
\begin{eqnarray*}\per({S})\leq
2^{Q-1}{\bf r}(S)\,\left(\int_S |\MS|\,\per + \nis(\partial S)+ \sum_{l\in\mathbb{N}}  \mu(x_l, r_l)\right).\end{eqnarray*}
Since$$2^{Q-1}{\bf r}(S)\leq 2^{Q-1}\sup_{x\in {\rm Int}(S\setminus C_S)}2
\left(\frac{\per({S})}{k_\varrho(\nn(x))}\right)^{\frac{1}{Q-1}}=2^Q\sup_{x\in
{\rm Int}(S\setminus C_S)}\frac{\left(\per({S})\right)^{\frac{1}{Q-1}}}{\left(k_\varrho(\nn(x))\right)^{\frac{1}{Q-1}}},$$
using  \eqref{emfac} yields
$$2^{Q-1}{\bf r}(S)\leq 2^Q\,
\frac{\left(\per({S})\right)^{\frac{1}{Q-1}}}{K_1^{\frac{1}{Q-1}}};$$ see Remark \ref{boundonmetricfactor}.  Therefore\begin{eqnarray}\label{finineqf}\left(\per({S})\right)^{\frac{Q-2}{Q-1}}\leq
C_1\left(\int_S
|\MS|\,\per +\nis(\partial S)+\sum_{l\in\mathbb{N}}  \mu(x_l, r_l)\right) \end{eqnarray}where we have set $C_{1}:={2^{Q}}/{K_1^{\frac{1}{Q-1}}}.$ 
Furthermore, we have
 \begin{eqnarray*}\sum_{l\in\mathbb{N}}  \mu(x_l, r_l)&=&\sum_{l\in\mathbb{N}}\sum_{i=2}^k i\,{\bf c}_i\,r_l^{i-1}\sum_{j_i\in I\ci}\int_{\partial
S\cap B_\varrho(x_l, r_l)}|\nn\wedge X_{j_i} \LL\Vol|\\&=&\sum_{l\in\mathbb{N}}\sum_{i=2}^k i\,{\bf c}_i\,r_l^{i-1}\sum_{j_i\in I\ci}\mu_{j_i}(\partial
S\cap B_\varrho(x_l, r_l))\end{eqnarray*}where we have
used Definition \ref{xazax} with $\xi=\nn$; see Notation \ref{nuota1}. Then $$\sum_{l\in\mathbb{N}}  \mu(x_l, r_l)\leq  \sum_{i=2}^k i\,{\bf c}_i\, \left( {\bf r}(S) \right)^{i-1}\sum_{j_i\in I\ci}\mu_{j_i}(\partial
S);$$see Notation \ref{klkl}. Furthermore, since $$\mu_{j_i}(\partial S)=\int_{\partial
S }|\nn\wedge X_{j_i} \LL\Vol|\leq \int_{\partial
S }|\P\ciss\eta|\sigma\rr^{n-2},$$ we have
$$\sum_{j_i\in I\ci}\mu_{j_i}(\partial S)\leq h_i \int_{\partial
S }|\P\ciss\eta|\,\sigma\rr^{n-2},$$where $h_i=\dim \HH_i$. 
Therefore, we conclude that
 \begin{eqnarray*}\sum_{l\in\mathbb{N}}  \mu(x_l, r_l)&\leq& \sum_{i=2}^k i\,{\bf c}_i\left( {\bf r}(S) \right)^{i-1}\sum_{j_i\in I\ci}\mu_{j_i}(\partial S)\\&\leq & \sum_{i=2}^k i\,{\bf c}_i\,h_i\left( {\bf r}(S)\right)^{i-1} \int_{\partial
S }|\P\ciss\eta|\,\sigma\rr^{n-2}\\&\leq & C_{2}\sum_{i=2}^k  \left( {\bf r}(S)\right)^{i-1} \int_{\partial
S }|\P\ciss\eta|\,\sigma\rr^{n-2},\end{eqnarray*}where we have set $C_2:=\max_{i=2}^k i\,{\bf c}_i\,h_i$.
Using the last inequality it follows that\begin{eqnarray}\label{finineqf222}\left(\per({S})\right)^{\frac{Q-2}{Q-1}}\leq
C_{Isop}\left(\int_S
|\MS|\,\per +\nis(\partial S)+ \sum_{i=2}^k  \left( {\bf r}(S)\right)^{i-1} \int_{\partial
S }|\P\ciss\eta|\,\sigma\rr^{n-2} \right) \end{eqnarray}where we have set $C_{Isop}:=\max\left\lbrace C_1, C_2 \right\rbrace$. This achieves the proof.
\end{proof}

\subsection{An application of the monotonicity formula: asymptotic
behavior of $\per$}\label{asintper}
 
 The monotonicity formula
\eqref{rmytn} (see Theorem \ref{rmonin}) can be formulated as
follows:
\begin{eqnarray}\label{zoccowa}\frac{d}{dt}\left(\frac{\per({S}_t)}{t^{Q-1}}\,\exsp
\left(\int_0^t\frac{\mathcal{A}(s)+\mathcal{B}_2(s)}{\per({S}_s)}ds\right)\right)\geq
0\end{eqnarray} for $\mathcal{L}^1$-a.e. $t\in[0, r_S]$ and for
every $x\in {\rm Int}\,(S\setminus C_S)$. For the sake of simplicity, let $\partial S=\emptyset$ (and hence
 $\mathcal{B}_2(s)=0$). By Theorem
\ref{BUP}, Case (i), we may pass to the limit as $t\searrow 0^+$
in the previous inequality; see Section \ref{blow-up}. Hence
\begin{eqnarray}\label{zoccona}\per({S}_t)\geq
\kappa_{\varrho}(\nn(x))\,t^{Q-1}\exsp
\left(-\int_0^t\frac{\mathcal{A}(s)}{\per({S}_s)}ds\right),\end{eqnarray}for
every $x\in{\rm Int}({S}\setminus C_S)$.

\begin{corollario}\label{asynt}Let $\GG$ be a $k$-step Carnot group and let
${S}\subset\GG$ be a  hypersurface of class $\cont^2$ without boundary. Assume  
that $|\MS|\leq\MS^0<+\infty$. Then, for every $x\in{\rm
Int}({S}\setminus C_S)$, one has
\begin{eqnarray}\label{ak}
\per({S}_t)\geq \kappa_{\varrho}(\nn(x))\,t^{Q-1} e^{-t\,
 \MS^0}\end{eqnarray}as long as $t
\rightarrow 0^+$.
\end{corollario}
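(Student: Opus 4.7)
The plan is to start directly from the exponential form of the monotonicity formula recalled just before the Corollary in \eqref{zoccona}: since $\partial S=\emptyset$ gives $\mathcal{B}_2\equiv 0$, this inequality reads
\[
\per(S_t)\geq \kappa_{\varrho}(\nn(x))\,t^{Q-1}\,\exsp\!\left(-\int_0^t\frac{\mathcal{A}(s)}{\per(S_s)}\,ds\right)
\]
for every $x\in{\rm Int}(S\setminus C_S)$. The whole task therefore reduces to bounding the quantity $\mathcal{A}(s)/\per(S_s)$ from above by $\MS^0$ (asymptotically, as $s\to 0^+$), which I would carry out in two short steps.

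First, I would invoke Lemma \ref{crux00}, which is applicable precisely because $x\in{\rm Int}(S\setminus C_S)$. The point is that the factor $1+\sum_{i=2}^k i\,c_i\,\varrho_x^{i-1}\,|\varpi\ci|$ entering the definition of $\mathcal{A}(s)$ is bounded by $1+O(s)$ on $S_s$ as $s\to 0^+$: indeed, $|\PH\nu|$ is bounded away from zero on a neighborhood of $x$, so each $|\varpi\ci|$ stays bounded there, while $\varrho_x\leq s$ on $S_s$. Combined with the standing hypothesis $|\MS|\leq\MS^0$, this yields the asymptotic bound
\[
\mathcal{A}(s)\leq (1+o(1))\,\MS^0\,\per(S_s)\qquad\text{as }s\to 0^+.
\]

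Second, dividing by $\per(S_s)$ and integrating from $0$ to $t$ gives
\[
\int_0^t\frac{\mathcal{A}(s)}{\per(S_s)}\,ds\leq t\,\MS^0\,(1+o(1))\qquad\text{as }t\to 0^+,
\]
and substituting this back into the exponential form of the monotonicity formula, together with the elementary estimate $\exsp(-t\,\MS^0\,(1+o(1)))=(1+o(1))\,e^{-t\MS^0}$, delivers exactly \eqref{ak}. I do not foresee any serious obstacle here: all the heavy analytic work is packed into \eqref{zoccona} (whose justification in turn relies on Theorem \ref{BUP}, Case (i), to identify $\lim_{t\searrow 0^+}\per(S_t)/t^{Q-1}=\kappa_\varrho(\nn(x))$, as already indicated in the preamble of Section \ref{asintper}) and into Lemma \ref{crux00}; the remaining manipulation is essentially a one-line computation.
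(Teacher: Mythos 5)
Your proposal is correct and follows essentially the same route as the paper: start from the exponential form \eqref{zoccona} of the monotonicity formula with $\mathcal{B}_2\equiv 0$, bound $\mathcal{A}(s)\leq(1+o(1))\,\MS^0\,\per(S_s)$ via Lemma \ref{crux00}, integrate, and substitute back. If anything, your version is slightly more careful than the paper's, whose displayed bound $\int_0^t\mathcal{A}(s)/\per(S_s)\,ds\leq\MS^0(1+o(1))$ omits the factor $t$ that you correctly retain and that is needed to produce $e^{-t\,\MS^0}$.
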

\begin{proof}We just have to bound $\int_0^t
\frac{\mathcal{A}(s)}{\per({S}_s)}\,ds$ from above. Using Lemma
\ref{crux00} yields\[\int_0^t
\frac{\mathcal{A}(s)}{\per({S}_s)}\,ds\leq
\MS^0\left(1+o(1)\right)\] as long as $t\rightarrow 0^+$ and
\eqref{ak} follows from \eqref{zoccona}.
 \end{proof}

\begin{war}
If $S$ is smooth enough near its characteristic set $C_S$ and $\MS$ is globally bounded, the
previous asymptotic estimate can be generalized by applying the results of
Section \ref{perdindirindina}. In the following corollaries, however, \underline{we need to assume that condition \eqref{0key2} holds} at the point where  the monotonicity inequality has to be proved.
\end{war}

\begin{corollario}\label{2asynt}Let $\GG$ be a $k$-step Carnot group. Let
${S}\subset\GG$ be a  hypersurface without boundary and such that $|\MS|\leq\MS^0<+\infty$. Let
$x\in {\rm Int}(S\cap C_S)$ and    assume that ${\rm ord}(x)=Q-i$, for
some $i=2,..., k$. 
 With no loss of generality, we suppose that
 there
exists $\alpha\in\{\DH+1,...,n\}$, ${\rm ord}(\alpha)=i$, such that $S$
can be represented, locally around $x$, as  $X_\alpha$-graph of
a $\cont^i$ function satisfying \eqref{0dercond}. We further assume that condition \eqref{0key2} holds at the point $x$. Then
\begin{eqnarray}\label{2asintotica}
\per({S}_t)\geq \kappa_{\varrho}(C_S(x))\,t^{Q-1} e^{-t\, \MS^0\left(\kappa_\varrho(C_S(x))+
{\bf d}_\varrho\right)}\end{eqnarray}as long as $t \rightarrow
0^+$.
\end{corollario}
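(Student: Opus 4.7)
The plan is to mirror the proof of Corollary \ref{asynt}, substituting the non-characteristic blow-up data with the characteristic counterparts supplied by Theorem \ref{BUP} Case (ii) and Lemma \ref{cux0}. Since $\partial S=\emptyset$, the term $\mathcal{B}_2$ drops out of the monotonicity inequality \eqref{rmytn2}, and by the explicit assumption that condition \eqref{0key2} holds at $x$, the proof of Theorem \ref{rmonin} goes through at this characteristic point. Hence the exponential reformulation \eqref{zoccowa} is valid:
\[\frac{d}{dt}\left(\frac{\per(S_t)}{t^{Q-1}}\,\exsp\!\left(\int_0^t\frac{\mathcal{A}(s)}{\per(S_s)}\,ds\right)\right)\geq 0\]
for $\mathcal{L}^1$-a.e.\ small $t>0$.

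Next, I would integrate this differential inequality from a small $t_0$ up to $t$ and pass to the limit $t_0\searrow 0^+$. The crucial input at this step is Theorem \ref{BUP}, Case (ii): because $x$ has order $Q-i$ and $S$ is locally a $\cont^i$-graph of a function satisfying the derivative-vanishing condition \eqref{0dercond}, one has
\[\lim_{t_0\rightarrow 0^+}\frac{\per(S_{t_0})}{t_0^{Q-1}}=\kappa_\varrho(C_S(x)).\]
This limit plays exactly the role that $\kappa_\varrho(\nn(x))$ played in Corollary \ref{asynt}. The resulting inequality reads
\[\per(S_t)\geq \kappa_\varrho(C_S(x))\,t^{Q-1}\,\exsp\!\left(-\int_0^t\frac{\mathcal{A}(s)}{\per(S_s)}\,ds\right).\]

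Finally, I would estimate the integral in the exponent using Lemma \ref{cux0}, which is tailored precisely to the characteristic setting. That lemma yields
\[\mathcal{A}(s)\leq \MS^0\bigl(\kappa_\varrho(C_S(x))+d_\varrho\bigr)\,s^{Q-1}\qquad\text{as }s\rightarrow 0^+,\]
and combining it with the blow-up asymptotic $\per(S_s)\sim\kappa_\varrho(C_S(x))\,s^{Q-1}$ shows that the ratio $\mathcal{A}(s)/\per(S_s)$ stays bounded as $s\rightarrow 0^+$, with
\[\int_0^t\frac{\mathcal{A}(s)}{\per(S_s)}\,ds\leq t\,\MS^0\bigl(\kappa_\varrho(C_S(x))+d_\varrho\bigr)(1+o(1))\qquad(t\rightarrow 0^+).\]
Substituting this bound into the exponential yields the claimed estimate \eqref{2asintotica}.

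The genuinely delicate part is not the assembly of these three ingredients, but the need to invoke the hypothesis \eqref{0key2} at a characteristic point: non-characteristically this identity was automatic from Lemma \ref{kr}, but at characteristic points the unit $\HH$-normal is not defined and the corresponding density argument breaks down. That is why the statement of Corollary \ref{2asynt} explicitly carries this assumption as an unavoidable input, and it is the only nontrivial obstacle once one has Theorem \ref{BUP}(ii) and Lemma \ref{cux0} in hand.
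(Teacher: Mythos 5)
Your proof follows essentially the same route as the paper's: pass to the limit in the exponential form \eqref{zoccowa} of the monotonicity inequality using the characteristic blow-up of Theorem \ref{BUP}, Case (ii), to produce the density $\kappa_\varrho(C_S(x))$, and then bound the exponent via Lemma \ref{cux0}. Your additional remark that hypothesis \eqref{0key2} is the genuinely non-automatic input at a characteristic point (since Lemma \ref{kr} only covers $x\in{\rm Int}(S\setminus C_S)$) is exactly the point flagged in the Warning preceding the corollary, so the argument is correct and matches the paper.
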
We recall that the function $\kappa_{\varrho}(C_S(x))$ has been defined in
Theorem \ref{BUP}; see Case (ii). Notice that
${\bf d}_\varrho={\sum_{i=2}^k i\,{\bf c}_i\DH_i\,{\bf b}_\varrho}$, where
${\bf b}_\varrho$ is the constant
defined by \eqref{tonda3}.
\begin{proof}By arguing as above, we may pass to the
limit in \eqref{zoccowa}
 as $t\searrow 0^+$  and we  get that
\begin{eqnarray*}\per({S}_t)\geq
\kappa_{\varrho}(C_S(x))\,t^{Q-1}\exsp
\left(-\int_0^t\frac{\mathcal{A}(s)}{\per({S}_s)}ds\right).\end{eqnarray*}
By using Lemma \ref{cux0}, we get that
\begin{eqnarray*}\int_0^t\frac{\mathcal{A}(s)}{\perh({S}_s)}ds\leq
 \|\MS\|_{L^\infty(S)}\,\left(\kappa_\varrho(C_S(x))+
{\bf d}_\varrho\right)\,t\leq \MS^0\left(\kappa_\varrho(C_S(x))+
{\bf d}_\varrho\right)\,t
\end{eqnarray*}as long as $t\rightarrow
0^+$. This achieves the proof. \end{proof}

 In particular, in the case of   Heisenberg
 groups
$\mathbb{H}^r$, the following holds:
\begin{corollario}\label{hasynt}Let
$(\mathbb{H}^r, \varrho)$ be the  Heisenberg group endowed with
the Koranyi distance; see Example \ref{Kor}. Let
$S\subset\mathbb{H}^r$ be a hypersurface of class $\cont^2$ without boundary and assume
that $|\MS|\leq\MS^0<+\infty$. Furthermore, let $x\in{\rm Int} (S \cap C_S)$ be such that condition \eqref{0key2} holds. Then 
\begin{eqnarray}\label{2asintotica}
\perh({S}_t)\geq \kappa_{\varrho}(C_S(x))\,t^{Q-1} e^{-t\, \MS^0
\left(\kappa_\varrho(C_S(x))+{\bf b}_\varrho\right)}\end{eqnarray}as long as $t \rightarrow
0^+$.\end{corollario}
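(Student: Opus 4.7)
The plan is to observe that Corollary \ref{hasynt} is essentially a direct specialization of Corollary \ref{2asynt} to the specific setting of the Heisenberg group $\mathbb{H}^n$ equipped with the Koranyi distance, so the proof should follow exactly the same pattern with the relevant structural constants computed explicitly. Since $\mathbb{H}^n$ is a $2$-step Carnot group with $\DH_2 = \dim\HH_2 = 1$, and the Koranyi norm has already been verified in Example \ref{Kor} to satisfy conditions (ii) and (iii) of Definition \ref{2iponhomnor1}, all the hypotheses needed to invoke the monotonicity formula \eqref{rmytn} and the density estimates of Section \ref{perdindirindina} are in force.

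First I would note that, because $\partial S = \emptyset$, the boundary term $\mathcal{B}_2(s)$ vanishes, and the monotonicity inequality \eqref{rmytn2} (which uses condition \eqref{0key2} at $x$) can be rewritten in its exponentiated form
\[
\frac{d}{dt}\left(\frac{\per({S}_t)}{t^{Q-1}}\,\exp\Bigl(\int_0^t\tfrac{\mathcal{A}(s)}{\per({S}_s)}\,ds\Bigr)\right)\geq 0,
\]
valid for $\mathcal{L}^1$-a.e.\ $t$ near $0$. Since $k=2$ one automatically has $i = \mathrm{ord}(\alpha) = 2$, so the $\cont^2$ regularity of $S$ is exactly what is needed to apply Theorem \ref{BUP}, Case (ii); this lets us pass to the limit as $t\searrow 0^+$ and yields
\[
\per({S}_t)\geq \kappa_{\varrho}(C_S(x))\,t^{Q-1}\exp\Bigl(-\int_0^t\tfrac{\mathcal{A}(s)}{\per({S}_s)}\,ds\Bigr).
\]

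Next I would apply Lemma \ref{cux0} to bound the integrand: since $|\MS|\leq\MS^0$ and $\mathcal{A}(s)\leq \MS^0(\kappa_\varrho(C_S(x)) + d_\varrho)\,\mathcal{S}^{Q-1}_\varrho(S_s)$, standard density comparison (Theorem 2.10.17 in \cite{FE}) between $\mathcal{S}^{Q-1}_\varrho$ and the $\HH$-perimeter $\per$ via the bounded metric factor gives
\[
\int_0^t\frac{\mathcal{A}(s)}{\per({S}_s)}\,ds\leq \MS^0\bigl(\kappa_\varrho(C_S(x)) + d_\varrho\bigr)\,t\,(1+o(1))
\]
as $t\to 0^+$. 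It remains only to specialize $d_\varrho=\sum_{i=2}^k i\,c_i\DH_i\,b_\varrho$ to $\mathbb{H}^n$: with $k=2$ and $\DH_2=1$, only the $i=2$ term survives, so $d_\varrho = 2c_2\,b_\varrho$, where $c_2$ is the Koranyi-specific constant satisfying $|y\cd|\leq c_2\varrho^2(y)$. Substituting the Koranyi normalization of Example \ref{Kor} gives $d_\varrho = b_\varrho$, and the claimed inequality follows.

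The main (and essentially only) nontrivial point is pedagogical rather than technical: one must track carefully that the regularity threshold $\cont^i$ required by Theorem \ref{BUP}, Case (ii) coincides with the blanket $\cont^2$ assumption precisely because $\mathbb{H}^n$ is $2$-step, so no extra smoothness near $C_S$ needs to be assumed (in contrast with the general statement of Corollary \ref{2asynt}). Apart from this, the proof is a verbatim transcription of the argument of Corollary \ref{2asynt} with the numerical constants computed in the Heisenberg–Koranyi setting.
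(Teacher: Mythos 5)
Your proposal is correct and follows essentially the same route as the paper: pass to the limit in the exponentiated monotonicity formula \eqref{zoccowa} with $\mathcal{B}_2\equiv 0$, use the characteristic blow-up of Theorem \ref{BUP}, Case (ii), and bound $\int_0^t \mathcal{A}(s)/\perh(S_s)\,ds$ via the density estimates of Section \ref{perdindirindina}, finally noting that $d_\varrho=2c_2\DH_2 b_\varrho=b_\varrho$ since $c_2=\tfrac12$ for the Koranyi norm. The only cosmetic difference is that you route the estimate through Lemma \ref{cux0} (i.e.\ through Corollary \ref{2asynt}) whereas the paper invokes Lemma \ref{perdopo} directly; these are the same bound, and your remark that $\cont^2$ regularity suffices precisely because $\mathbb{H}^n$ is $2$-step is a correct reading of the hypotheses.
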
The density-function $\kappa_{\varrho}(C_S(x))$ has
been defined in Theorem \ref{BUP}; see Case (ii).
Moreover,  ${\bf b}_\varrho$ is the constant defined by
\eqref{tonda3}.
\begin{proof}By arguing as for the non-characteristic case, we may pass to the
limit in \eqref{zoccowa}
 as $t\searrow 0^+$. As above, we have
\begin{eqnarray*}\perh({S}_t)\geq
\kappa_{\varrho}(C_S(x))\,t^{Q-1}\exsp
\left(-\int_0^t\frac{\mathcal{A}(s)}{\perh({S}_s)}ds\right),\end{eqnarray*}as
$t\searrow 0^+$, for every $x\in {S}\cap C_S$. By applying Lemma
\ref{perdopo} 
\begin{eqnarray*}\frac{\mathcal{A}(s)}{\perh({S}_s)}\leq
\MS^0\left(\kappa_\varrho(C_S(x))+ 2\,{\bf c}_2\,{\bf b}_\varrho \right)=\MS^0\left(\kappa_\varrho(C_S(x))+
{\bf b}_\varrho\right),
\end{eqnarray*}for every (small
enough) $s>0$, since in this case ${\bf c}_2=\frac{1}{2}$.
\end{proof}
\begin{es} Let $(\mathbb{H}^r, \varrho)$, where
$\varrho$ is the Koranyi distance and $Q=2r +2$. Let
$$S=\{\exp(x\cc, t)\in \mathbb{H}^r : t=0\}.$$We have
$C_S=0\in\mathbb{H}^r $ and
$\nn=-\frac{1}{2}C^{2r+1}\cc x\cc$. Furthermore
\[-\MS=\div\cc \nn=\frac{1}{2}\div_{\R^{2r}}(-x_2, x_1,...,-x_{2r},x_{2r-1})=0.\]Note that
$\kappa_\varrho(C_S)=\frac{O_{2r}}{4r}$, where $\mathit{O}_{2r-1}$
is the surface measure of the unit sphere
$\mathbb{S}^{2r-1}\subset\R^{2r}$. Thus \eqref{2asintotica} says
that $\perh({S}_t)\geq \frac{O_{2r}}{4r}\,t^{Q-1}$. This inequality can also be proved by using the formula $\perh=\frac{|x\cc|}{2}\,d
\mathcal{L}^{2r}$ and then by introducing spherical coordinates on $\R^{2r}$.
\end{es}

\section{ Sobolev-type inequalities on hypersurfaces}\label{sobineqg}

The isoperimetric inequality \eqref{2gha} turns out to be equivalent to
a Sobolev-type inequality. The proof is analogous to that of the
equivalence between the (Euclidean) Isoperimetric Inequality and the
Sobolev one; see \cite{BuZa}. Below we shall assume that $S$ is a compact $\cont^2$ hypersurface without boundary. 
\begin{teo}\label{sobolev2}Let $\GG$ be a $k$-step Carnot group endowed with a homogeneous metric $\varrho$ as in
Definition \ref{2iponhomnor1}. Let
$S\subset\GG$ be a  compact hypersurface of class $\cont^2$  without
boundary. Let $\MS$ be the horizontal mean curvature of
$S$ and assume that $\MS\in L^1(S; \sigma\rr^{n-1})$.
Then\begin{eqnarray}\label{sersobolev1}\left(\int_S|\psi|^{\frac{Q-1}{Q-2}}\,\per\right)^{\frac{Q-2}{Q-1}}\leq
C_{Isop}\left(\int_{S}\left(|\psi|\,|\MS|
+|\qq\psi|\right)\,\per+ \sum_{i=2}^k  \left( {\bf r}(S)\right)^{i-1}   \int_S |\grad\ciss\psi|\,\sigma\rr^{n-1}\right) \end{eqnarray}for every
$\psi\in\cont^1(S)$, where $C_{Isop}$ is the same constant
appearing in Theorem \ref{ghaioio}.
\end{teo}
\begin{proof}The proof follows a classical argument;
see \cite{FedererFleming}, \cite{MAZ}. Since
$|\qq\psi|\leq|\qq|\psi||$, without loss of generality we 
assume $\psi\geq 0$. Set
$S_t:=\{x\in S: \psi(x)>t\}$. The set $S_t$ is a bounded open subset of $S$ and, by applying
Sard's Lemma, we see that its boundary $\partial S_t$ is $\cont^1$ 
for $\mathcal{L}^1$-a.e. $t\geq 0$. Furthermore, $S_t=\emptyset$
for each (large enough) $t>0$. The main tools  
are  {\it Cavalieri's
principle}\footnote{\label{CavPrin}The following lemma, also known
as {\it Cavalieri's principle}, is a simple consequence of Fubini's
Theorem:
\begin{lemma}\label{CPrin}Let X be an abstract space, $\mu$ a measure on $X$,
$\alpha>0$, $\varphi\geq 0$ and $A_t=\{x\in X: \varphi>t\}$. Then
$$\int_0^{+\infty}t^{\alpha-1}\mu(A_t)\,dt=\frac{1}{\alpha}\int_{A_0}\varphi^\alpha\,d\mu.$$\end{lemma}}
 and the Riemannian Coarea Formula; see \cite{BuZa}, \cite{Ch2}, \cite{FE}. We start by the
 identity
\begin{equation}\label{concavp}\int_S|\psi|^{\frac{Q-1}{Q-2}}
\per=\frac{Q-1}{Q-2}\int_0^{+\infty}t^{\frac{1}{Q-2}}\,\per(S_t)\,dt\end{equation}
which follows from Lemma \ref{CPrin} with
$\alpha=\frac{Q-1}{Q-2}$. We also recall that, if
$\varphi:\R_+\longrightarrow\R_+$ is a positive {\it decreasing}
function and $\alpha\geq 1$,
then$$\alpha\int_0^{+\infty}t^{\alpha-1}\varphi^\alpha\,dt\leq\left(\int_0^{+\infty}\varphi(t)\,dt\right)^\alpha.$$
Using \eqref{concavp} and the last inequality yields
\begin{eqnarray*}&&\int_S\psi^{\frac{Q-1}{Q-2}}\,\per\\&=&\frac{Q-1}{Q-2}
\int_0^{+\infty}t^{\frac{1}{Q-2}}\,\per(S_t)\,dt \\&\leq&
\left[\int_0^{+\infty}\left(\per(S_t)\right)^{\frac{Q-2}{Q-1}}\,dt\right]^{\frac{Q-1}{Q-2}}\\&\leq&
\left[\int_0^{+\infty}C_{Isop}\,\left(\int_{S_t} |\MS|\,\per
+\nis(\partial S_t)+ \sum_{i=2}^k  \left( {\bf r}(S_t)\right)^{i-1} \int_{\partial
S_t}|\P\ciss \eta|\,\sigma\rr^{n-2} 
\right)\,dt\right]^{\frac{Q-1}{Q-2}}\\\\&\leq& \left[C_{Isop}\left(\int_{S}\left(|\psi|\,|\MS|
+|\qq\psi|\right)\,\per+ \sum_{i=2}^k  \left( {\bf r}(S)\right)^{i-1}   \int_S |\grad\ciss\psi|\,\sigma\rr^{n-1}\right)\right]^{\frac{Q-1}{Q-2}},\end{eqnarray*}where
we have used  \eqref{2gha}
with $S=S_t$ and the (Riemannian) Coarea formula together with the obvious estimate ${\bf r}(S_t)\leq {\bf r}(S)$.
 \end{proof}

\begin{no}For any $p>0$,  set
 $\frac{1}{p^\ast}=\frac{1}{p}-\frac{1}{Q-1}$. Furthermore, we
 denote by $p'$ the H\"{o}lder conjugate of $p$, that is,  $\frac{1}{p}+\frac{1}{p'}=1$.  \end{no}
 Henceforth, we shall assume that $\MS$ is globally
bounded on $S$ and  set
$\MS^0:=\|\MS\|_{L^\infty(S)}$.

\begin{corollario}\label{corsob1}\small Under the  assumptions of Theorem \ref{sobolev2},
one has
$$\|\psi\|_{L^{p^\ast}(S)}\leq C_{Isop}\left[\MS^0 \|\psi\|_{L^p\left(S, \per\right)}+ c_{p^{\ast}}\left( \|\qq\psi\|_{L^p\left(S, \per\right)}  +\sum_{i=2}^k  \left( {\bf r}(S)\right)^{i-1} \|\grad\ciss\psi\|_{L^p\left(S, \sigma\rr^{n-1}\right)}\right) \right]$$ for every
$\psi\in  \cont^1(S)$, where
$c_{p^{\ast}}:=p^\ast\frac{Q-2}{Q-1}$.  Thus, there exists
$C_{p^{\ast}}=C_{p^{\ast}}(\MS^0, {\bf r}(S), \varrho, \GG)$ such
that$$\|\psi\|_{L^{p^\ast}\left(S, \per\right)}\leq C_{p^{\ast}}\left(
\|\psi\|_{L^p\left(S, \per\right)}+ \|\qq\psi\|_{L^p\left(S, \per\right)}+  \|\grad\vs\psi\|_{L^p\left(S, \sigma\rr^{n-1}\right)}\right)$$ for every
$\psi\in  \cont^1(S)$.
\end{corollario}

\begin{proof}Let us apply \eqref{sersobolev1} with
$\psi$ replaced by $\psi|\psi|^{t-1}$, for some $t>0$. It follows
that
\begin{eqnarray}\nonumber\left(\int_S|\psi|^{t\,\frac{Q-1}{Q-2}}\,\per\right)^{\frac{Q-2}{Q-1}}\leq
C_{Isop}\left[ \int_{S}\left(\MS^0\,|\psi|^t+ t|\psi|^{t-1}
|\qq\psi|\right)\per\right.\\\label{wfiha}\left.  +\sum_{i=2}^k  \left( {\bf r}(S)\right)^{i-1} \int_{S} t|\psi|^{t-1}
|\grad\ciss\psi|\,\sigma\rr^{n-1} \right] .
\end{eqnarray}If we put $(t-1)p'=p^\ast$,  one gets $p^\ast=t\,\frac{Q-1}{Q-2}$. Using H\"{o}lder inequality
yields
\begin{eqnarray*}\left(\int_S|\psi|^{p^\ast}\per\right)^{\frac{Q-2}{Q-1}}\leq
C_{Isop}\left(\int_S|\psi|^{p\ast}\per\right)^{\frac{1}{p'}}\times\\\times\left(\MS^0\,\|\psi\|_{L^p\left(S, \per\right)}
+ t\,\|\qq\psi\|_{L^p\left(S, \per\right)} + \sum_{i=2}^k  \left( {\bf r}(S)\right)^{i-1}  t\,\|\grad\ciss\psi\|_{L^p\left(S, \sigma\rr^{n-1}\right)}\right).
\end{eqnarray*}
\end{proof}

\begin{corollario}\label{corsob12}Under the  assumptions of Theorem \ref{sobolev2}, let
$p\in[1, Q-1[$. For all $q\in[p, p^\ast]$ one has
\begin{eqnarray*}\|\psi\|_{L^{q}\left(S, \per\right)}\leq \left(1+\MS^0\,C_{Isop}\right)
\|\psi\|_{L^p\left(S, \per\right)}\\+c_{p^\ast}\,C_{Isop}\left( \|\qq\psi\|_{L^p\left(S, \per\right)}+\sum_{i=2}^k  \left( {\bf r}(S)\right)^{i-1} \|\grad\ciss\psi\|_{L^p\left(S, \sigma\rr^{n-1}\right)}\right)\end{eqnarray*}for
every $\psi\in  \cont^1(S)$. In particular, there exists
$C_q={C}_q(\MS^0,  {\bf r}(S), \varrho, \GG)$ such that
$$\|\psi\|_{L^{q}\left(S, \per\right)}\leq C_q\left(
\|\psi\|_{L^p\left(S, \per\right)}+ \|\qq\psi\|_{L^p\left(S, \per\right)}+  \|\grad\vs\psi\|_{L^p\left(S, \sigma\rr^{n-1}\right)}\right)$$ for every
$\psi\in  \cont^1(S)$.
\end{corollario}

\begin{proof}For any given $q\in[p, p^\ast]$ there exists $\alpha\in[0,1]$ such that
 $\frac{1}{q}=\frac{\alpha}{p}+\frac{1-\alpha}{p^\ast}.$ Hence$$\|\psi\|_{L^q\left(S, \per\right)}\leq
 \|\psi\|^\alpha_{L^p\left(S, \per\right)}\|\psi\|^{1-\alpha}_{L^{p^\ast}\left(S, \per\right)}
 \leq\|\psi\|_{L^p\left(S, \per\right)}+\|\psi\|_{L^{p^\ast}\left(S, \per\right)},$$where we have
 used  {\it interpolation inequality} and Young's inequality.
 The thesis follows from Corollary \ref{corsob1}.\end{proof}

\begin{corollario}[Limit case: $p=Q-1$]\label{corsob13}Under the  assumptions of Theorem \ref{sobolev2}, let
$p=Q-1$. For every $q\in[Q-1, +\infty[$ there exists
$C_q={C}_q(\MS^0, {\bf r}(S), \varrho, \GG)$ such that
$$\|\psi\|_{L^{q}\left(S, \per\right)}\leq C_q
\left(\|\psi\|_{L^p\left(S, \per\right)}+\|\qq\psi\|_{L^p\left(S, \per\right)}+  \|\grad\vs\psi\|_{L^p\left(S, \sigma\rr^{n-1}\right)}\right)$$ for every
$\psi\in  \cont^1(S)$.
\end{corollario}

\begin{proof}By using  \eqref{wfiha} we easily get that there exists
${C}_1={C}_1(\MS^0, {\bf r}(S), t, \varrho, \GG)>0$ such that
\begin{eqnarray*}\left(\int_S|\psi|^{t\,\frac{Q-1}{Q-2}}\per\right)^{\frac{Q-2}{Q-1}}&\leq&
{C}_1\left[ \int_{S}\left(|\psi|^t+ |\psi|^{t-1} |\qq\psi|\right)\,\per+ \int_S|\psi|^{t-1} |\grad\vs\psi|\,\sigma\rr^{n-1}\right] 
\end{eqnarray*}for every $\psi\in  \cont^1(S)$. From now on we assume that $t\geq 1$. Using H\"{o}lder
inequality with $p=Q-1$, yields
\begin{eqnarray*}\|\psi\|_{L^{t\,\frac{Q-1}{Q-2}}\left(S, \per\right)}^t \leq 
{C}_1\left[\|\psi\|_{{L^t}\left(S, \per\right)}^t +
\|\psi\|_{L^{\frac{(t-1)(Q-1)}{Q-2}}\!\left(S, \per\right)}^{t-1}\right.\times \\\left. \times\left( \|\qq\psi\|_{L^{Q-1}\left(S, \per\right)}+ \|\grad\vs\psi\|_{L^{Q-1}\left(S, \sigma\rr^{n-1}\right)}\right) \right]
\end{eqnarray*}for every $\psi\in  \cont^1(S)$ and $t\geq 1$. By means of Young's 
inequality, we get that there exists another constant
${C_2}={C_2}(\MS^0, {\bf r}(S), t, \varrho, \GG)$ such that
\begin{eqnarray*}\|\psi\|_{L^{t\,\frac{Q-1}{Q-2}}\left(S, \per\right)} \leq 
{C_2}\left(\|\psi\|_{L^t\left(S, \per\right)}+
\|\psi\|_{L^{\frac{(t-1)(Q-1)}{Q-2}}\!\left(S, \per\right)}+\right.\\\left.+
\|\qq\psi\|_{L^{Q-1}\left(S, \per\right)}+ \|\grad\vs\psi\|_{L^{Q-1}\left(S,\sigma\rr^{n-1}\right)}\right).
\end{eqnarray*}By setting $t=Q-1$ in the last inequality we get
that
\begin{eqnarray*}\|\psi\|_{L^{\frac{(Q-1)^2}{Q-2}}\!\left(S, \per\right)}\leq
{C_2}\left(2\|\psi\|_{L^{Q-1}\left(S, \per\right)}+ \|\qq\psi\|_{L^{Q-1}\left(S, \per\right)}+ \|\grad\vs\psi\|_{L^{Q-1}\left(S, \sigma\rr^{n-1}\right)}\right).
\end{eqnarray*}By reiterating this procedure for $t=Q, Q+1,...$
one can show that for all $q\geq Q-1$ there exists
$C_q={C}_q(\MS^0, {\bf r}(S), \varrho, \GG)$ such
that$$\|\psi\|_{L^q\left(S, \per\right)}\leq C_q\left(\|\psi\|_{L^{Q-1}\left(S, \per\right)}+
\|\qq\psi\|_{L^{Q-1}\left(S, \per\right)}+ \|\grad\vs\psi\|_{L^{Q-1}\left(S, \sigma\rr^{n-1}\right)} \right)$$for every
$\psi\in  \cont^1(S)$, as wished.
\end{proof}

{\footnotesize \noindent Francescopaolo Montefalcone:\\
Dipartimento di Matematica\\
Universit\`{a} degli Studi di Padova\\
Address: Via Trieste, 63,\,\,
35121 Padova (Italy)
 \\ {\it E-mail}: {\textsf montefal@math.unipd.it}}}

\end{document}